\documentclass{amsart}
\usepackage{tikz}
\usepackage{amssymb}
\usepackage{stackrel}
\usepackage{color}
\usepackage{hyperref}
\usepackage{tikz-cd}
\usepackage{rotating}
\newcommand*{\isoarrow}[1]{\arrow[#1,"\rotatebox{90}{\(\simeq\)}"
	]}
\DeclareFontFamily{U}{wncy}{}
\DeclareFontShape{U}{wncy}{m}{n}{<->wncyr10}{}
\DeclareSymbolFont{mcy}{U}{wncy}{m}{n}
\DeclareMathSymbol{\Sha}{\mathord}{mcy}{"58}

\theoremstyle{plain}

\newtheorem*{theorem*}{Theorem}

\newtheorem{theorem}{Theorem}[section]
\newtheorem{lemma}[theorem]{Lemma}
\newtheorem{proposition}[theorem]{Proposition}
\newtheorem{conjecture}[theorem]{Conjecture}
\newtheorem{corollary}[theorem]{Corollary}
\theoremstyle{definition}
\newtheorem{definition}[theorem]{Definition}
\newtheorem{example}[theorem]{Example}

\newtheorem{remark}[theorem]{Remark}

\numberwithin{equation}{section}
\usepackage[all]{xy}
\DeclareMathOperator{\Po}{Po}

\DeclareMathOperator{\Cl}{Cl}
\DeclareMathOperator{\disc}{disc}
\DeclareMathOperator{\Gal}{Gal}
\DeclareMathOperator{\Int}{Int}
\DeclareMathOperator{\Ker}{Ker}

\DeclareMathOperator{\Ver}{Ver}

\DeclareMathOperator{\PSL}{PSL}

\begin{document}
	
	\title{On P\'olya groups of some non-Galois number fields}
	
	
	\author{Abbas Maarefparvar}
	\address{Department of Mathematics \& Computer Science
		University of Lethbridge,
		Lethbridge, Alberta, T1K 3M4}
	\curraddr{}
	\email{abbas.maarefparvar@uleth.ca}
	\thanks{}
	\date{}
	\dedicatory{}
	\commby{}
	
	\subjclass[2010]{11R21,11R29,11R37}

	\begin{abstract}
		We prove two conjectures raised in  Chabert and Halberstadt's paper \cite{Chabert II}  concerning P\'olya groups of $S_4$-fields and $D_4$-fields. More generally, the latter will be proved for $D_n$-fields with $n \geq 4$ an even integer. Further, generalizing a result of Zantema \cite{Zantema}, we also prove that the pre-P\'olya group of a non-Galois field of a prime degree, e.g. an $A_5$-field, coincides with its P\'olya group.
	\end{abstract}

	\maketitle
	
	\vspace{.2cm} {\noindent \bf{Keywords:}}~ non-Galois number field, pre-P\'olya group, P\'olya group, $S_n$-field, $A_n$-field, $D_n$-field.

	\vspace{0.3cm}	
	
	{\noindent \bf{Notation.}}~  The following notations will be used throughout this article:

For a number field $K$, the notations $\Cl(K)$, $h_K$, $\mathcal{O}_K$, $\disc(K)$, and $\mathbb{P}_K$ denote the ideal class group, the class number, the ring of integers, the discriminant and the set of all prime ideals of $K$, respectively. For a finite extension $K/F$ of number fields, we use  $N_{K/F}$ to denote both the ideal norm and the element norm map from $K$ to $F$. The map ${\epsilon}_{K/F}:\left[\mathfrak{a}\right]\in \Cl(F) \rightarrow \left[\mathfrak{a} \mathcal{O}_K\right] \in \Cl(K)$ denotes the capitulation map. For an integer $n \geq 3$, we denote the symmetric group and the alternating group on $n$ symbols by $S_n$ and $A_n$, respectively. Also, $D_n$ denotes the dihedral group of order $2n$. 

\vspace{.2cm}	

\section{Introduction} \label{section, background}
Let $K$ be a number field and denote its ring of integers by $\mathcal{O}_K$. The ring of integer-valued  polynomials on $\mathcal{O}_K$ is defined as
\begin{align*} 
	\Int(\mathcal{O}_K)=\{f \in K[x] \mid f(\mathcal{O}_K) \subseteq \mathcal{O}_K\}.
\end{align*}
It has been proved that $\Int(\mathcal{O}_K)$ is a free  $\mathcal{O}_K$-module, however finding an explicit  $\mathcal{O}_K$-basis for $\Int(\mathcal{O}_K)$ might be a challenging task \cite[Remark II.3.7]{Cahen-Chabert's book}. P\'olya \cite{Polya} was interested in those fields $K$ for which  Int($\mathcal{O}_K$) has a \textit{regular basis}, i.e.,  an $\mathcal{O}_K$ basis  $\{f_n\}_{n \geq 0}$ such that $deg(f_n)=n$, for every $n$. As suggested by Zantema \cite{Zantema}, such field $K$ is called a \textit{P\'olya field}.

We observe that $K$ is a P\'olya field if and only if for every prime power $q$, the ideal 
\begin{align} \label{equation, Ostrowski ideal}
	\Pi_q(K)=:\prod_{\substack{\mathfrak{m}\in \mathbb{P}_K\\ N_{K/ \mathbb{Q}}(\mathfrak{m})=q}} \mathfrak{m}
\end{align}
is principal (By convention, if $K$ has no ideal with norm $q$,  set $\Pi_{q}(K)=\mathcal{O}_K$). This criterion is given by Ostrowski \cite{Ostrowski}, and so, we call $\Pi_q(K)$  an \textit{Ostrowski ideal}.


In a more modern approach, in 1997, the notion of \textit{P\'olya group} was introduced by Cahen and Chabert \cite[Chapter II, $\S$3]{Cahen-Chabert's book}.

\begin{definition} \label{definition, Polya group}
The P\'olya group of a number field $K$, denoted by $\Po(K)$, is the subgroup of $\Cl(K)$ generated by the classes of all the Ostrowski ideals \eqref{equation, Ostrowski ideal}, i.e., 
	\begin{equation} \label{equation, Polya group}
	\Po(K)=\langle \left[\Pi_{p^f}(K)\right]\, : \, f \in \mathbb{N}, \, \text{$p$ is a  prime number}\rangle.
\end{equation}
\end{definition}
P\'olya fields, i.e., number fields with trivial P\'olya groups, include various ranges of number fields, such as class number one fields, cyclotomic fields \cite{Zantema}, and Hilbert class fields \cite{Leriche 2014}. 

Investigating P\'olya fields and P\'olya groups can be divided into  Galois and non-Galois cases. If $K/\mathbb{Q}$ is Galois, then all the Ostrowski ideals above \textit{unramified} primes are principal \cite[Page 121]{Ostrowski}. Consequently, the P\'olya group $\Po(K)$ would be generated by the classes $\Pi_q(K)$ where $q$ is a power of a \textit{ramified} prime $p$ in $K/\mathbb{Q}$. Most of the results in the literature for P\'olya fields and P\'olya groups concern Galois number fields, see, e.g., \cite{Chabert I,Leriche 2011,Leriche 2013,Leriche 2014,MR1,MR2,M3}. In contrast, the situation for the non-Galois case is more complicated because, in this case, one also needs to consider the Ostrowski ideals above unramified primes. As the first tool for investigating the P\'olya-ness of non-Galois fields, the notion of \textit{pre-P\'olya field} was introduced by Zantema.

\begin{definition} \cite[Definition 6.1]{Zantema}
	A number field $K$ is called a pre-P\'olya field if for every unramified prime $p$ in $K/\mathbb{Q}$ and each $f \in \mathbb{N}$, the Ostrowski ideal $\Pi_{p^f}(K)$ \eqref{equation, Ostrowski ideal} is principal. 
\end{definition}

\begin{remark}
Note that if $K/\mathbb{Q}$ is Galois, then $K$ is a pre-P\'olya field. Because in this case, as mentioned before, all the Ostrowski ideals $\Pi_q(K)$ above unramified primes are principal.
\end{remark}

Let $[K:\mathbb{Q}]=n \geq 3$. Recall that for a transitive subgroup $G$ of $S_n$, $K$ is called a \textit{$G$-field} if the Galois closure of $K$ (over $\mathbb{Q}$) has the Galois group isomorphic to $G$. Zantema showed that for a `vast range' of non-Galois $G$-fields, pre-P\'olya-ness is equivalent to having class number one.



\begin{theorem}[\textbf{Zantema}] \label{theorem, Zantema's result for Sn and An fields}  Let $K$ be a number field of degree $n \geq 3$. The following assertions hold:
	\begin{itemize}
		\item[(a)] 	\cite[Theorem 6.9]{Zantema} If $K$ is a $S_n$-field ($n \neq 4$), or an $A_n$-field ($n \neq 3,5$), or a $G$-field for some Frobenius group $G$, then
		\begin{equation*}
			\text{$K$ is a pre-P\'olya field}  \iff \text{$K$ is a P\'olya field} \iff h_K=1.
		\end{equation*}
		
		\item[(b)] \cite[Theorem 5.5]{Zantema2} If $K$ is a $S_4$-field, then
				\begin{equation*}
 \text{$K$ is a P\'olya field} \iff h_K=1.
		\end{equation*}
		
		\item[(c)] \cite[Section 8]{Zantema}  If $K$ is an $A_5$-field, then 
		\begin{equation*}
			\text{$K$ is a pre-P\'olya field}  \iff \text{$K$ is a P\'olya field}.
		\end{equation*}
	\end{itemize}
	
 %
\end{theorem}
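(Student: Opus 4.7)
The plan is to exploit the correspondence between the unramified splitting of a rational prime $p$ in $K$ and the cycle structure of a Frobenius element on the coset space $G/H$, where $L$ denotes the Galois closure of $K/\mathbb{Q}$, $G = \Gal(L/\mathbb{Q})$, and $H = \Gal(L/K)$. The action of $G$ on $G/H$ realizes the embedding $G \hookrightarrow S_n$. For $p$ unramified in $L$ with Frobenius $\sigma \in G$, the primes $\mathfrak{p}_i$ of $K$ above $p$ are in bijection with the cycles of $\sigma$ on $G/H$, the residue degree $f(\mathfrak{p}_i/p)$ being the corresponding cycle length. Hence $\Pi_{p^f}(K)$ is the product of those $\mathfrak{p}_i$ whose cycle has length $f$. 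The crucial observation is that if $\sigma$ admits a \emph{unique} cycle of length $f$ on $G/H$, then $\Pi_{p^f}(K)$ is a single prime, hence principal under pre-P\'olya.

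To prove (a), I would use Minkowski's bound to reduce to showing that each prime $\mathfrak{p}$ of $K$ of small norm is principal, then invoke Chebotarev density to realize any desired conjugacy class of $G$ as a Frobenius of some unramified $p$. The heart of the matter becomes a group-theoretic claim: for $G$ among $S_n$ $(n\neq 4)$, $A_n$ $(n\neq 3,5)$, and Frobenius groups with their natural degree-$n$ action, every conjugacy class $C$ whose elements have at least one cycle of length $f$ on $G/H$ can be matched with another class $C'$ whose elements have \emph{exactly one} cycle of length $f$. Combined with Chebotarev, each prime of $K$ of residue degree $f$ differs in $\Cl(K)$ from a single-prime Ostrowski ideal by a product of other Ostrowski ideals, and an induction on $f$ yields pre-P\'olya $\Rightarrow h_K = 1$.

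For (b) with $G = S_4$, the double-transposition class on $4$ points has cycle type $2\cdot 2$, so no unramified prime produces a single degree-$2$ Ostrowski ideal; this explains the exclusion of $n=4$ in (a). Including the ramified Ostrowski ideals in the hypothesis (as the full P\'olya condition in (b) does) restores the missing flexibility, since a ramified prime with appropriate inertia can isolate a degree-$2$ prime as an Ostrowski ideal, after which the argument proceeds as in (a).

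The hardest case, and the main obstacle, will be (c). For $G = A_5$ acting on $5$ letters the same degree-$2$ obstruction reappears via the double-transposition cycle type $1\cdot 2^2$, and the ramified Ostrowski ideals do not appear strong enough to force $h_K = 1$. I would therefore aim only for the weaker pre-P\'olya $\iff$ P\'olya, which amounts to showing that every ramified Ostrowski ideal class already lies in the pre-P\'olya subgroup. My plan is to exploit the simplicity and the $2$-transitivity of $A_5$ on $A_5/A_4$: for each prime of $L$ over a ramified $p$, the decomposition and inertia groups are proper cyclic or small subgroups of $A_5$, and a careful case analysis of the induced action on $A_5/A_4$ should express the ramified Ostrowski ideal in terms of unramified ones. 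This step is, to my mind, the technical heart of the theorem.
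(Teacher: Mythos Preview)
The paper does not contain a proof of this theorem. Theorem~\ref{theorem, Zantema's result for Sn and An fields} is stated purely as background, with each of (a), (b), (c) carrying an explicit citation to Zantema's papers \cite{Zantema,Zantema2}; no argument for any part is given in the present paper. Consequently there is no ``paper's own proof'' against which to compare your proposal.

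That said, the paper does summarize enough of Zantema's machinery (Section~\ref{section, image of Po(K)}) to see that your outline for (a) is not on the right track. The actual mechanism is not a Minkowski-bound reduction plus a ``class-matching'' trick; rather, Zantema introduces the set $T$ of elements of $H$ fixing only the trivial coset (Theorem~\ref{theorem, Zantema's result when T is non-empty}) and shows that pre-P\'olya forces $T\subseteq H_1$, hence $H_K\subseteq N_K$, after which the quotient $H/\langle H_1,T\rangle$ is computed group-theoretically for each family of $G$. Your ``unique cycle of length $f$'' idea is loosely related to $T$, but the conclusion $h_K=1$ does not come from isolating small-norm primes. For (b), the paper's later proof of the stronger statement $\Po(K)=\Cl(K)$ (Theorem~\ref{theorem, S4-field}) shows that Zantema's argument in \cite{Zantema2} rests on an exhaustive analysis of the $18$ possible $(T(\mathfrak{P}/p),Z(\mathfrak{P}/p))$ pairs in $S_4$ (Table~\ref{tab, Zantema's table}) together with the auxiliary abelian extension of Lemma~\ref{lemma, Zantema's result for existence of E abelian}; your one-sentence sketch does not touch either ingredient. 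For (c), your plan is closer in spirit, but the paper's generalization (Theorem~\ref{theorem, our results for non-cyclic fields of prime degrees}) indicates that the real work passes through the structure of solvable subgroups of $\PSL(2,q)$ and the map $\psi$ of Lemma~\ref{lemma, Ho,T0=T0}, not a direct orbit analysis on $A_5/A_4$.
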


\begin{remark}
	Zantema gave an example of a pre-P\'olya $S_4$-fields which is not P\'olya, see \cite[Section 6]{Zantema}. Also, he presented an example of a P\'olya $A_5$-field with class number three, see \cite[Section 8]{Zantema}.
\end{remark}

Generalizing the notion of the pre-P\'olya field, Chabert and Halberstadt \cite{Chabert II} introduced the notion of \textit{pre-P\'olya group}.



\begin{definition} \cite[Section 1]{Chabert II} \label{definition, pre-Polya group}
	For a number field $K$, the \textit{pre-P\'olya group} of $K$, denoted by $\Po(K)_{nr}$, is the subgroup of $\Cl(K)$ generated by the classes of all the Ostrowski ideals above unramified primes, i.e.,
	\begin{equation} \label{equation, pre-Polya group}
		\Po(K)_{nr}=\langle \left[\Pi_{p^f}(K)\right]\, : \, f \in \mathbb{N}, \, \text{$p$ is an unramified prime in $K/\mathbb{Q}$}\rangle.
	\end{equation}
\end{definition}

Part (a) of Theorem \ref{theorem, Zantema's result for Sn and An fields} has been generalized by Chabert and Halberstadt as follows. 

\begin{theorem} \cite[Theorem 4.1]{Chabert II}
	Let $K$ be a number field of degree $n \geq 3$. If $K$ is a $G$-field, where $G=S_n$ ($n \neq 4$), or  $G=A_n$ ($n \neq 3,5$), or $G$ is a Frobenius group, then
	\begin{equation*}
		\Po(K)_{nr1}=\Po(K)_{nr}=\Po(K)=\Cl(K),
	\end{equation*} 
where
\begin{equation} \label{equation, pre-Polya group1}
	\Po(K)_{nr1}=\langle \left[\Pi_{p}(K)\right]\, : \, \text{$p$ is an unramified prime in $K/\mathbb{Q}$}\rangle.
\end{equation}
\end{theorem}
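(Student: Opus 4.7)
The three inclusions $\Po(K)_{nr1}\subseteq\Po(K)_{nr}\subseteq\Po(K)\subseteq\Cl(K)$ are tautological, so the plan is to prove the reverse inclusion $\Cl(K)\subseteq\Po(K)_{nr1}$: every class $C\in\Cl(K)$ should be realized by $\Pi_p(K)$ for some unramified prime $p$ at which the Ostrowski ideal reduces to a single prime of $K$ of residue degree one.

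\emph{Setup and translation.} Let $L/\mathbb{Q}$ be the Galois closure of $K/\mathbb{Q}$, set $G=\Gal(L/\mathbb{Q})$ and $H=\Gal(L/K)$, so that $G\curvearrowright G/H$ is the given transitive degree-$n$ permutation representation. Let $H_K$ be the Hilbert class field of $K$, let $M$ be the Galois closure over $\mathbb{Q}$ of $L\cdot H_K$, and set $\Gamma=\Gal(M/\mathbb{Q})$, $N=\Gal(M/K)$ and $N'=\Gal(M/H_K)$, so that $N/N'\cong\Cl(K)$ and the restriction $\pi:\Gamma\twoheadrightarrow G$ sends $N$ onto $H$ with $\pi(N')=\Gal(L/L\cap H_K)\trianglelefteq H$. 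For a prime $p$ unramified in $M$ with Frobenius $\phi\in N$, the associated prime $\mathfrak{p}$ of $K$ has residue degree $1$ and class $\phi N'\in\Cl(K)$; moreover $\Pi_p(K)=\mathfrak{p}$ precisely when $\pi(\phi)\in H$ has exactly one fixed point on $G/H$. By Chebotarev's density theorem applied to $M/\mathbb{Q}$, it therefore suffices to prove the following group-theoretic statement: for every $c\in N/N'$ one may find $\phi\in N$ with $\phi N'=c$ such that $\pi(\phi)$ acts as a $1$-fixed-point permutation on $G/H$.

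\emph{Group-theoretic step.} Since modifying $\phi$ by $N'$ shifts $\pi(\phi)$ through a coset of $\pi(N')$ in $H$, and $H/\pi(N')\cong\Gal((L\cap H_K)/K)$ is an abelian quotient common to $H$ and $\Cl(K)$, the task reduces to: for each admissible pair $(G,H)$, every coset of every normal subgroup $H_0\trianglelefteq H$ arising as such a $\pi(N')$ contains an element whose unique fixed point on $G/H$ is $H$. For $G=S_n$ with $n\neq 4$, $(n-1)$-cycles in $S_{n-1}$ together with their products with a fixed transposition cover both cosets of $A_{n-1}\trianglelefteq S_{n-1}$; for $G=A_n$ with $n\neq 3,5$ one combines $(n-1)$-cycles (when $n$ is even) with elements such as $(1\,2)(3\,4)(5\,6\cdots n{-}1)$ (when $n\geq 7$ is odd); for $G$ a Frobenius group the free action of the point-stabilizer on $G/H\setminus\{H\}$ renders every non-identity element of $H$ a suitable derangement.

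\emph{Main obstacle.} Chebotarev is standard input and the structural reduction is formal; the real substance lies in the case-by-case group-theoretic verification, whose tightness is illustrated precisely by the excluded cases. For $G=A_5$, $H=A_4$, all elements of $H$ with a single fixed point on $G/H$ lie in the Klein four subgroup $V_4\trianglelefteq A_4$ and therefore project trivially to $A_4/V_4\cong\mathbb{Z}/3$; whenever $\Gal((L\cap H_K)/K)\cong\mathbb{Z}/3$ (which genuinely occurs when the cyclic cubic subfield of $H_K$ happens to sit inside $L$) the required derangement in the non-trivial cosets simply does not exist, in agreement with the P\'olya $A_5$-field of class number three exhibited by Zantema. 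Analogous obstructions explain the exclusion of $A_3$ (where $H=A_2$ admits no derangement at all) and of $S_4$ (where the Klein four normal subgroup $V_4\trianglelefteq S_4$ confines the relevant $1$-fixed-point elements of $H=S_3$ to a single coset modulo $A_3$).
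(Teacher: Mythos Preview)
Your reduction via Chebotarev to the Galois closure $M$ of $L\cdot H_K$ is sound, but the group-theoretic target you set is too strong and fails in cases the theorem covers. You aim to show that \emph{every} class $c\in\Cl(K)$ equals $[\Pi_p(K)]$ for some unramified $p$ with a unique degree-$1$ prime; equivalently, that every coset of $\pi(N')$ in $H$ meets the one-fixed-point set $T_0$. But the identity $e\in H$ has $n>1$ fixed points and never lies in $T_0$, so whenever $\pi(N')=\{e\}$ the trivial coset fails. And $\pi(N')=\{e\}$ genuinely occurs: it means precisely that $L\subseteq H_K$, i.e.\ $L/K$ is abelian and everywhere unramified, which can happen for $S_3$-fields ($H\cong S_2$), for $A_4$-fields ($H\cong A_3$), and for Frobenius $G$-fields with abelian complement $H$. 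When moreover $h_K>[L:K]$, the kernel of the restriction $\Cl(K)\twoheadrightarrow\Gal(L/K)$ contains nontrivial classes, and these cannot be written as $[\Pi_p(K)]$ with a single degree-$1$ factor. (Separately, your construction for $S_n$ is loose: the product of an $(n{-}1)$-cycle with a transposition need not be a derangement, e.g.\ $(1\,2\,\ldots\,n{-}1)(1\,2)=(1\,3\,4\,\ldots\,n{-}1)$ fixes $2$.)

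The fix is to aim only for \emph{generation}: show that the classes $[\Pi_p(K)]$ generate $\Cl(K)$, not that they exhaust it. This is what Chabert and Halberstadt actually do, and what the present paper records as Proposition~\ref{proposition, Ch-H result for Po(K)nr1} and Theorem~\ref{theorem, CH}: the conclusion $\Po(K)_{nr1}=\Cl(K)$ follows once $\langle T_0,H'\rangle=H$, a condition that holds in every listed case. For $S_3$-fields $H=S_2$, $H'=\{e\}$, $T_0=\{(1\,2)\}$, so $\langle T_0\rangle=H$; for $A_4$-fields $H=A_3$ and $T_0$ consists of both $3$-cycles; for Frobenius groups $T_0=H\setminus\{e\}$ generates trivially. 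Your coset-covering condition implies this generation condition but is strictly stronger, and the extra strength is exactly what breaks at small degree.
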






Inspired by part (b) of Theorem \ref{theorem, Zantema's result for Sn and An fields}, 
Chabert and Halberstadt \cite[Section 6]{Chabert II} presented the following conjecture for $S_4$-fields.

\begin{conjecture}[\textbf{Chabert-Halberstadt}] \label{conj, S4-fields}

	Let $K$ be a $S_4$-field. Then 
	\begin{equation*}
		\Po(K)=\Cl(K).
	\end{equation*}
\end{conjecture}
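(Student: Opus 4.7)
The plan is to realize every ideal class $c\in\Cl(K)$ as the class of an Ostrowski ideal and thereby conclude $c\in\Po(K)$. Let $L$ be the Galois closure of $K/\mathbb{Q}$, with $G:=\Gal(L/\mathbb{Q})\cong S_4$ acting on the four embeddings of $K$ into $L$, and let $H:=\Gal(L/K)\cong S_3$ be the stabilizer of one such embedding.

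The key observation is that if $p$ is a rational prime unramified in $K/\mathbb{Q}$ whose Frobenius class in $G$ consists of $3$-cycles (cycle type $(3,1)$), then $p\mathcal{O}_K=\mathfrak{m}\cdot\mathfrak{n}$ with $f(\mathfrak{m}/p)=1$ and $f(\mathfrak{n}/p)=3$, so $\Pi_p(K)=\mathfrak{m}$ and hence $[\mathfrak{m}]\in\Po(K)$. It therefore suffices to find, for every $c\in\Cl(K)$, such a degree-one prime $\mathfrak{m}\in c$ lying above a rational prime of Frobenius type $(3,1)$.

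For this I would apply the Chebotarev density theorem to the Galois extension $N:=L\cdot H_K$ of $K$. The restriction maps embed $\Gal(N/K)$ inside $\Gal(L/K)\times\Gal(H_K/K)\cong S_3\times\Cl(K)$ as the fibered product over $\Gal(L\cap H_K/K)$. Since $L\cap H_K$ is a subfield of $L$ abelian over $K$, and the only such subfields are $K$ and the unique quadratic subextension $K':=L^{A_3}$ of $L/K$, we have either $L\cap H_K=K$ or $L\cap H_K=K'$. In the \emph{generic} case $L\cap H_K=K$ the restriction map is surjective onto $S_3\times\Cl(K)$, so for any $c$ the conjugacy class of an element whose projections are a $3$-cycle on $S_3$ and the Artin symbol $\phi_c$ on $\Cl(K)$ is non-empty; Chebotarev then produces the required prime $\mathfrak{m}$, completing the proof in this case.

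The main obstacle is the \emph{exceptional} case $L\cap H_K=K'$, which occurs precisely when $K(\sqrt{\disc K})/K$ is unramified, equivalently $K'\subseteq H_K$. A direct case-by-case inspection, running over every Frobenius type in $G$ (and over every cyclic inertia pattern compatible with this hypothesis), shows that in the exceptional case every Ostrowski ideal of $K$ lies inside the index-$2$ subgroup $\Gal(H_K/K')\subset\Cl(K)$, so the Chebotarev step alone cannot reach the non-trivial coset. My plan for completing the argument is therefore to rule out the exceptional case for $S_4$-fields: since $\disc(K)$ is a non-square integer (as $\Gal(L/\mathbb{Q})=S_4\not\subseteq A_4$), a careful local analysis of the contributions $v_\mathfrak{p}(\disc K)=e(\mathfrak{p}/p)\cdot v_p(\disc K)$ at each ramified prime of $K$, together with the subtle $2$-adic conditions governing ramification of $K(\sqrt{\disc K})/K$ above $2$, should produce a prime $\mathfrak{p}$ of $K$ at which the quadratic extension $K(\sqrt{\disc K})/K$ must ramify. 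Verifying this uniformly across tame and wild ramification, and across the various inertia types allowed in $S_4$, is expected to be the principal technical hurdle.
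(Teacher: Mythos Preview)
Your Chebotarev argument in the generic case $L\cap H_K=K$ is fine and coincides with the paper's easy case $M_1=K$. The genuine gap is your plan for the exceptional case $L\cap H_K=K'$: this case \emph{cannot} be ruled out. Zantema \cite[\S6]{Zantema} already exhibits an $S_4$-field that is pre-P\'olya but not P\'olya; for it $\Po(K)_{nr}=0$ while $h_K>1$, and by \cite[Proposition~6.1]{Chabert II} this forces $H_K=K(\sqrt{\disc K})$, so $K(\sqrt{\disc K})/K$ is unramified at every prime. Your proposed local analysis, whose goal is to produce a prime at which $K(\sqrt{\disc K})/K$ ramifies, is therefore doomed. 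Relatedly, your asserted case-by-case verification that \emph{every} Ostrowski ideal lands in the index-$2$ subgroup is incorrect for ramified primes: if $p$ has cyclic inertia $\langle(1234)\rangle$ and decomposition group $D_4$ in $L/\mathbb{Q}$ (a pattern compatible with $K'/K$ being unramified at $p$), then $p\mathcal{O}_K=\mathfrak{p}^4$ and the Artin symbol of $\Pi_p(K)=\mathfrak{p}$ in $K'/K$ is non-trivial (case~9 of Zantema's table \cite[p.~100]{Zantema2}). It is exactly such ramified-prime Ostrowski ideals that supply the missing coset in Zantema's example.

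The paper's proof takes the opposite tack: rather than excluding the exceptional case, it assumes for contradiction that $\Po(K)=\Po(K)_{nr1}\subsetneq\Cl(K)$ and runs through Zantema's exhaustive list of the $18$ possible pairs $(T(\mathfrak{P}/p),Z(\mathfrak{P}/p))$ in $S_4$. Six cases are eliminated because $K'/K$ is unramified; three more (including case~9 above) are eliminated because the homomorphism $\Phi$ of the Main Theorem is non-zero on $[\Pi_p(K)]$ there, contradicting $\Po(K)=\Ker\Phi$; the remaining nine are eliminated by constructing, via \cite[Lemma~5.4]{Zantema2}, an auxiliary abelian field $E$ from the primes with abelian decomposition group (this step uses that $N_K/K$ is unramified at all finite primes in the exceptional case) and comparing $[EK:K]$ against the bound $[M_2:K]\le 2$. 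To salvage your approach you would have to \emph{use} the ramified Ostrowski ideals in the exceptional case, not bypass them.
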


Similarly, part (c) of Theorem \ref{theorem, Zantema's result for Sn and An fields} leads us to the following conjecture.

\begin{conjecture}  \label{conj, A5-fields}
	Let $K$ be an $A_5$-field. Then $\Po(K)_{nr}=\Po(K)$.
\end{conjecture}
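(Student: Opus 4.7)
The plan is to establish the reverse containment $\Po(K) \subseteq \Po(K)_{nr}$, which reduces to showing that $[\Pi_{q^f}(K)] \in \Po(K)_{nr}$ for every prime $q$ of $\mathbb{Q}$ ramified in $K$ and every $f \geq 1$. Throughout, write $[K:\mathbb{Q}] = p$ with $p$ prime, let $L$ denote the Galois closure of $K/\mathbb{Q}$, set $G = \Gal(L/\mathbb{Q})$ and $H = \Gal(L/K)$, so that $G$ is a transitive, non-cyclic subgroup of $S_p$. For a prime $\mathfrak{P}$ of $L$ above $q$ with decomposition group $D$ and inertia group $I$, the primes of $K$ above $q$ correspond to $D$-orbits on $G/H$, and the pair $(e,f)$ of each such prime is read off by refining its $D$-orbit into $I$-orbits.

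My first step is to enumerate, under the primality constraint $[G:H]=p$ together with the solvability of $D$, the admissible pairs $(I,D)$ and the resulting ramified factorization types in $K$. For an $A_5$-field the admissible inertia groups form the short list $\{1,\,\mathbb{Z}/2,\,\mathbb{Z}/3,\,\text{Klein four-group},\,\mathbb{Z}/5,\,S_3,\,D_5,\,A_4\}$, each producing an explicit orbit pattern on the five cosets $A_5/A_4$. Next, for each admissible ramified type I would apply the Chebotarev density theorem inside the compositum of $L$ with the Hilbert class field of $K$ to produce an unramified prime $q'$ of $\mathbb{Q}$ whose Frobenius conjugacy class in $G$ realises on $G/H$ the same multiset of residue degrees as $q$, and whose prime factors in $K$ fall in prescribed classes of $\Cl(K)$. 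Combined with the relations $[q\mathcal{O}_K] = [q'\mathcal{O}_K] = 1$, this matching expresses each ramified $[\Pi_{q^f}(K)]$ as a product of classes $[\Pi_{q'^{f'}}(K)]$ coming from unramified primes, and hence places it inside $\Po(K)_{nr}$.

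The hardest step will be the totally ramified case $q\mathcal{O}_K = \mathfrak{m}^p$, in which $[\mathfrak{m}]$ is a class of order dividing $p$. Here the inertia must act transitively on $G/H$, so for an $A_5$-field $I \in \{\mathbb{Z}/5, D_5\}$; the naive unramified analogue is a $p$-cycle Frobenius, which corresponds to inert $q'$ whose only Ostrowski ideal is the principal ideal $q'\mathcal{O}_K$ and therefore contributes nothing. To realise $[\mathfrak{m}]$ one must instead combine the partial Ostrowski ideals of unramified primes of mixed splitting type---for $A_5$-fields the types $2{+}2{+}1$ and $3{+}1{+}1$ coming from double transpositions and $3$-cycles. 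Carrying this out rigorously requires controlling the intersection of $L$ with the Hilbert class field of $K$ so that Chebotarev in their compositum can hit the prescribed class in $\Cl(K)$; the primality of $p$ and the transitivity of $G$ on $G/H$ are precisely the ingredients that make this control feasible, and they are also what will allow the argument to extend from $A_5$-fields to all non-Galois number fields of prime degree.
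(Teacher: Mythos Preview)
Your outline identifies the right reduction (handle the ramified Ostrowski classes) and the right playground (the compositum of $N_K$ with the Hilbert class field), but the mechanism you propose does not work as written and the hardest case is left unresolved.

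\textbf{Why the matching step breaks.} For a prime $q$ that actually ramifies in $K$ the residue degrees $f_1,\dots,f_t$ satisfy $\sum_i e_i f_i = 5$ with some $e_i>1$, so $\sum_i f_i < 5$. No unramified $q'$ can realise ``the same multiset of residue degrees'' since for such $q'$ the residue degrees must sum to $5$. Hence the step ``find $q'$ with the same residue-degree pattern and use $[q\mathcal{O}_K]=[q'\mathcal{O}_K]=1$'' cannot be carried out literally. Moreover, even granting some looser matching, ``prescribe the classes of the prime factors of $q'$'' via Chebotarev presupposes that you already know the target class $[\Pi_{q^f}(K)]$; but that class is exactly what you are trying to locate inside $\Po(K)_{nr}$. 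Without an \emph{a priori} constraint on where $[\Pi_{q^f}(K)]$ lives, Chebotarev gives you nothing to aim at. Your totally ramified paragraph concedes this and proposes to combine Ostrowski ideals of types $2{+}2{+}1$ and $3{+}1{+}1$, but never explains how to select the combination, nor how ``controlling $L\cap H_K$'' actually delivers the required class.

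\textbf{What the paper does instead.} The paper never tries to manufacture matching unramified primes. It constructs a homomorphism
\[
\Phi:\ \Po(K)\ \longrightarrow\ \frac{H_0}{\langle T_0,H_0'\rangle},\qquad H_0=\Gal(N_K/K),
\]
built from the Artin map into $\Gal(N_{H_K}/K)$, the quotient by the subgroup $\langle T,H_1\rangle$ generated by the ``one-fixed-point'' elements, and an injection down to the Galois closure $N_K$; by construction $\Ker(\Phi)\subseteq \Po(K)_{nr1}$. For $A_5$ one has $H_0=A_4$, $\langle T_0,H_0'\rangle=V_4$, so the target is cyclic of order $3$. The proof that $\Phi$ vanishes on every $[\Pi_{p^f}(K)]$ (ramified or not) is a purely group-theoretic computation inside $\PSL(2,q)$: one writes the inertia and decomposition groups of $N_K/\mathbb{Q}$ in upper-triangular form, uses that $q$ is a $2$-power so $|H_0/\langle T_0\rangle|$ is odd, and checks that every contribution to $\Phi([\Pi_{p^f}(K)])$ is a $2$-power-th power and hence trivial modulo $\langle T_0\rangle$. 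No Chebotarev is invoked for the ramified classes.

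The conceptual ingredient your sketch is missing is precisely this structural bound: once you know $\Cl(K)/\Po(K)_{nr1}$ has order dividing $3$, any Ostrowski class of order coprime to $3$ is automatically in $\Po(K)_{nr1}$, and the remaining classes are handled by the explicit $\Phi$-computation rather than by ad hoc prime matching.
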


Now let $K$ be a $D_n$-field (Recall that $D_n$ denotes the dihedral group of order $2n$). Chabert and Halberstadt proved that $\Po(K)_{nr1}=\Po(K)_{nr}$, see \cite[Proposition 5.1]{Chabert II}. Then, investigating $\Po(K)_{nr}$ in the special case $[K:\mathbb{Q}]=4$, they proposed the following conjecture.

\begin{conjecture}[\textbf{Chabert-Halberstadt}] \label{conj, D4-fields}
 \cite[Section 6]{Chabert II}
	Let $K$ be a $D_4$-field. If $h_K=2$, then $\Po(K)_{nr}=0$.
\end{conjecture}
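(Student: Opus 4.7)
The plan is to use the Chabert-Halberstadt reduction to $\Pi_p(K)$, enumerate the decomposition types of unramified primes via Frobenius in $D_4$, and exploit that the only subtle case collapses because $\operatorname{Aut}(\mathbb{Z}/2\mathbb{Z})$ is trivial.

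By \cite[Proposition 5.1]{Chabert II}, $\Po(K)_{nr} = \Po(K)_{nr1}$, so it suffices to show that $\Pi_p(K)$ is principal for every rational prime $p$ unramified in $K/\mathbb{Q}$. Write $L$ for the Galois closure of $K$, identify $\Gal(L/\mathbb{Q}) = D_4 = \langle r, s \mid r^4 = s^2 = 1,\ srs = r^{-1}\rangle$, embed $K = L^{\langle s\rangle}$, and let $F = L^{\langle r^2, s\rangle}$ be the unique quadratic subfield of $K$. The five conjugacy classes $\{1\}$, $\{r^2\}$, $\{r, r^3\}$, $\{s, r^2s\}$, $\{rs, r^3s\}$ of $D_4$ produce, via their action on the coset space $D_4/\langle s\rangle$, the factorization types $(1,1,1,1)$, $(2,2)$, $(4)$, $(1,1,2)$, $(2,2)$ respectively. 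In every case except $(1,1,2)$ the Ostrowski ideal $\Pi_p(K)$ equals either $\mathcal{O}_K$ (when $K$ has no prime of norm $p$) or $(p)$ (in the totally split case), and is therefore trivially principal.

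For the remaining type $(1,1,2)$, write $p\mathcal{O}_K = \mathfrak{p}_1 \mathfrak{p}_2 \mathfrak{p}_3$ with $f(\mathfrak{p}_1) = f(\mathfrak{p}_2) = 1$ and $f(\mathfrak{p}_3) = 2$, so that $\Pi_p(K) = \mathfrak{p}_1 \mathfrak{p}_2$. Since $K/F$ is Galois of degree two and $\langle r^2, s\rangle$ is the normalizer of $\langle s\rangle$ in $D_4$, the non-trivial automorphism $\tau$ of $K/F$ extends to an automorphism of $K$ over $\mathbb{Q}$, induced by the central element $r^2 \in D_4$. By Galois theory $\tau$ transitively permutes the two primes above the common contraction $\mathfrak{P} := \mathfrak{p}_i \cap \mathcal{O}_F$, so $[\mathfrak{p}_2] = \tau_*[\mathfrak{p}_1]$ in $\Cl(K)$. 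Under $h_K = 2$ we have $\Cl(K) \cong \mathbb{Z}/2\mathbb{Z}$, whose automorphism group is trivial; hence $\tau_*$ is the identity on $\Cl(K)$, $[\mathfrak{p}_1] = [\mathfrak{p}_2]$, and
\[
[\Pi_p(K)] = [\mathfrak{p}_1 \mathfrak{p}_2] = 2[\mathfrak{p}_1] = 0 \quad \text{in } \Cl(K).
\]
This yields $\Po(K)_{nr1} = 0$, and hence $\Po(K)_{nr} = 0$.

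Beyond invoking the Chabert-Halberstadt reduction I do not foresee a genuine obstacle: the key observation is that the two degree-one primes appearing in the sole non-trivial decomposition type are $\Gal(K/F)$-conjugate, and this conjugation necessarily acts trivially on an order-two class group. No further class field theory or genus analysis is required.
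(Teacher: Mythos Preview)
Your argument is correct. Both your proof and the paper's share the same opening: reduce to $\Po(K)_{nr1}$ via \cite[Proposition~5.1]{Chabert II}, observe that among the unramified decomposition types only $(1,1,2)$ requires work, and recognise (explicitly in the paper, implicitly in your coset analysis) that the two degree-one primes $\mathfrak{p}_1,\mathfrak{p}_2$ lie over a single prime $\mathfrak{P}$ of the quadratic subfield $F$, so that $\Pi_p(K)=\mathfrak{P}\mathcal{O}_K$. The paper then argues \emph{via the norm}: since $N_{K/F}$ is surjective on class groups, $h_F\mid h_K=2$; after disposing of the case $\epsilon_{K/F}=0$ through Theorem~\ref{theorem, CH-Dn fields}, it gets $h_F=2$, whence $N_{K/F}$ is injective and $N_{K/F}(\Pi_p(K))=\mathfrak{P}^2$ being principal forces $\Pi_p(K)$ principal. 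You instead argue \emph{via the Galois action}: the nontrivial $\tau\in\Gal(K/F)$ swaps $\mathfrak{p}_1,\mathfrak{p}_2$, but $\operatorname{Aut}(\mathbb{Z}/2\mathbb{Z})$ is trivial, so $[\mathfrak{p}_1]=[\mathfrak{p}_2]$ and $[\Pi_p(K)]=[\mathfrak{p}_1]^2=0$. Your route avoids the case split on capitulation and any computation of $h_F$; it also generalises verbatim to $D_n$ with $n$ even, since $K/E$ is always a Galois quadratic extension there (the relevant order-$4$ subgroup $\langle r^{n/2},s\rangle$ is abelian). One minor point worth making explicit: justify that $\mathfrak{p}_1\cap\mathcal{O}_F=\mathfrak{p}_2\cap\mathcal{O}_F$, either by the residue-degree count or by noting that the two cosets $H$ and $Hr^2$ fixed by $s$ both lie in $\langle r^2,s\rangle$.
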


The main goal of this paper is to prove the above three conjectures. For proving Conjectures \ref{conj, S4-fields} and \ref{conj, A5-fields}, we employ the Main Theorem (Theorem \ref{theorem, the composite maps}) in which we give a general tool that facilitates investigating P\'olya groups of non-Galois number fields.

\medspace

\noindent
\textbf{Main Theorem (Informal Version).} \textit{Let $K$ be a non-Galois number field of degree $n \geq 3$. Let $H_0=\Gal(N_K/K)$, where $N_K$ denotes the Galois closure of $K$ (over $\mathbb{Q}$). Then, there exists an explicit group homomorphism
	\begin{equation} \label{equation, hom Phi}
		\Phi : \Po(K) \rightarrow \frac{H_0}{\langle H_0' \rangle},
	\end{equation}
	with $\Ker(\Phi) \subseteq  \Po(K)_{nr1}$, where $H_0'$ denotes the derived subgroup of $H$.}

\medspace

The homomorphism in $\Phi$ \eqref{equation, hom Phi} is the key tool in proving Conjectures \ref{conj, S4-fields} and \ref{conj, A5-fields}. More precisely, in Section \ref{Section, S4 fields}, we prove the following theorem.

\medspace

\begin{theorem*}[\textbf{Theorem \ref{theorem, S4-field}}]
Let $K$ be a $S_4$-field. Then $\Po(K)=\Cl(K)$. In other words, Conjecture \ref{conj, S4-fields} is true.
\end{theorem*}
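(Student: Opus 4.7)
The plan is to specialize the Main Theorem (Theorem \ref{theorem, the composite maps}) to the case where $K$ is an $S_4$-field, and then close the resulting small residual gap. The group-theoretic input is that the stabilizer $H_0=\Gal(N_K/K)$ is a point-stabilizer for the natural degree-$4$ action of $S_4$, so $H_0\cong S_3$; its derived subgroup is $A_3\cong\mathbb{Z}/3\mathbb{Z}$, and hence the target quotient $H_0/\langle H_0'\rangle$ has order at most $2$, regardless of whether $\langle H_0'\rangle$ denotes $H_0'$ itself or its normal closure in $S_4$. Feeding this into the Main Theorem produces a homomorphism $\Phi$ with $\Ker(\Phi)\subseteq \Po(K)_{nr1}$ and with image in a group of order at most $2$, yielding the a priori index bound $[\Cl(K):\Po(K)_{nr1}]\le 2$.

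Since $\Po(K)_{nr1}\subseteq\Po(K)\subseteq\Cl(K)$, it remains to show that the potential order-$2$ quotient is already realised by the class of some Ostrowski ideal in $\Po(K)$. If $\Cl(K)=\Po(K)_{nr1}$ the result is immediate. Otherwise, the non-trivial coset corresponds by class field theory to the unique unramified quadratic extension of $K$ inside $N_K$, namely $K\cdot E$ where $E=\mathbb{Q}(\sqrt{\disc K})$ is the quadratic subfield of the Galois closure of the cubic resolvent of $K$. The task is then to exhibit a prime $p$ for which some $\Pi_{p^f}(K)$ represents this non-trivial coset --- concretely, a prime whose Frobenius in $\Gal(N_K/\mathbb{Q})\cong S_4$ maps non-trivially under the composition $S_4\twoheadrightarrow S_4/V_4\cong S_3\twoheadrightarrow \mathbb{Z}/2\mathbb{Z}$ and whose Ostrowski ideal inherits a non-principal class.

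The hardest step is this last one: exhibiting such an Ostrowski ideal uniformly across all $S_4$-fields, and translating the Main Theorem's abstract $\Phi$ into a concrete Frobenius-type formula on Ostrowski classes. My expectation is that the ramified primes --- in particular those ramifying in $E/\mathbb{Q}$, which necessarily exist since $E$ is a genuine quadratic field (the discriminant of an $S_4$-field being a non-square) --- will provide the required classes, via a careful ramification analysis in the tower $\mathbb{Q}\subseteq E\subseteq K\cdot E\subseteq N_K$. Plugging this potential factor of $2$ then yields $\Po(K)=\Cl(K)$ and settles Conjecture \ref{conj, S4-fields}.
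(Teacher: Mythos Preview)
Your overall architecture matches the paper's: use the Main Theorem and the group-theoretic computation $H_0\cong S_3$, $\langle T_0,H_0'\rangle=A_3$ to get $[\Cl(K):\Po(K)_{nr1}]\le 2$, then (when the index is exactly $2$, so that $M_1:=K(\sqrt{\disc K})$ is the corresponding unramified quadratic extension of $K$) exhibit an Ostrowski class lying outside $\Po(K)_{nr1}$. The gap is in this last step, which you flag as the hardest but do not actually carry out; your stated expectation that ``the ramified primes --- in particular those ramifying in $E/\mathbb{Q}$ --- will provide the required classes'' is not sufficient and, as stated, is sometimes false.

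Concretely, Zantema's tabulation of the $18$ possible $(T(\mathfrak P/p),Z(\mathfrak P/p))$ pairs in an $S_4$-field shows that the Artin symbol $\bigl(\tfrac{M_1/K}{\Pi_{p^f}(K)}\bigr)$ is nontrivial only in cases $9$, $12$, $18$; these all force $p=2$ or $p\equiv 3\pmod 4$. A prime ramifying in $E=\mathbb{Q}(\sqrt{\disc K})$ has $T(\mathfrak P/p)\not\subseteq A_4$, and once one has imposed unramifiedness of $M_1/K$ (which eliminates cases $5,6,14,15,16,17$) such a prime lands in case $9$ \emph{or} case $10$. In case $10$ ($T=Z=\langle(1234)\rangle$, $p\equiv 1\pmod 4$) the symbol is \emph{trivial}, so that prime contributes nothing. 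There is no a~priori reason why \emph{some} ramified prime must realise case $9$, $12$, or $18$; ruling out the alternative is exactly where the work lies. The paper does this by contradiction: assuming $\Po(K)=\Po(K)_{nr1}$, one first shows (Lemma~\ref{lemma, NK/K is unramified for S4 fields K}) that $N_K/K$ is unramified at all finite primes, and then invokes Zantema's construction of an auxiliary abelian number field $E$ of degree $\prod_{p\in P}e_p(N_K/\mathbb{Q})$ with $(K\cdot E)/K$ unramified (Lemma~\ref{lemma, Zantema's result for existence of E abelian}). A degree comparison then forces the remaining ramification patterns (cases $1$--$4$, $7$, $8$, $10$, $11$, $13$) to be mutually inconsistent. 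In short, your proposal is missing precisely this global piece --- the auxiliary abelian field and the degree argument from \cite{Zantema2} --- without which the single-prime ramification analysis does not close.
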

As another application of Main Theorem, in Section \ref{Section, non-cyclic fields}, we obtain the following result in which  $\PSL(d,q)$ denotes the group of $d \times d$ matrices over $\mathbb{F}_q$ of determinant $1$ modulo scalars. Also, $\mathrm{P}\Sigma \mathrm{L}(d,q)$ denotes the permutation group on the points of $\mathbb{P}^{d-1}(\mathbb{F}_q)$ generated by $\text{Aut}(\mathbb{F}_q)$ and $\PSL(d,q)$, where $\text{Aut}(\mathbb{F}_q)$ denotes the group of field automorphisms of $\mathbb{F}_q$ \cite[Section 7]{Zantema}.

\medspace

\begin{theorem*}[\textbf{Theorem \ref{theorem, our results for non-cyclic fields of prime degrees}}]
Let $K/\mathbb{Q}$ be a non-Galois extension of prime degree $\ell \geq 5$. Denote by $N_K$ the Galois closure of $K$ (over $\mathbb{Q}$),  and let $G_0=\Gal(N_K/\mathbb{Q})$ be unsolvable. Then 
\begin{equation*}
	\Po(K)_{nr1}=\Po(K)_{nr}=\Po(K)=\Cl(K),
\end{equation*}
unless 
\begin{equation*}
	\PSL(2,q) \subseteq G_0 \subseteq \mathrm{P}\Sigma \mathrm{L}(2,q),
\end{equation*}
for some prime power $q$. In this case,
$\ell$  is a Fermat prime, $\frac{\Cl(K)}{\Po(K)_{nr1}}$ is a cyclic group whose order dividing $\ell -2$, and 
\begin{equation} \label{equation, Po(K)nr1 is equal to Po(K)-1}
	\Po(K)_{nr1}=\Po(K)_{nr}=\Po(K).
\end{equation} 
In particular, for a $G_0$-field $K$ with $G_0=A_5 \simeq \PSL(2,4)$, the equalities \eqref{equation, Po(K)nr1 is equal to Po(K)-1} hold. Consequently, Conjecture \ref{conj, A5-fields} is true.
\end{theorem*}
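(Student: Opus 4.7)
The plan is to combine the Main Theorem (Theorem \ref{theorem, the composite maps}) with the classification, via CFSG, of unsolvable transitive subgroups of $S_\ell$ on $\ell$ points ($\ell$ prime), and then to analyze the point-stabilizer $H_0 = \Gal(N_K/K)$ in each case.

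Specifically, I would invoke the classical theorem (Burnside, completed via CFSG by Feit, Ito and others): every unsolvable transitive subgroup $G_0 \leq S_\ell$ with $\ell \geq 5$ prime falls into one of three families: (i) $A_\ell \trianglelefteq G_0 \leq S_\ell$; (ii) one of the three sporadic actions, namely $\PSL(2,11)$ on $11$ points, $M_{11}$ on $11$, and $M_{23}$ on $23$; or (iii) $\PSL(d,q) \trianglelefteq G_0 \leq P\Gamma L(d,q)$ acting on $\mathbb{P}^{d-1}(\mathbb{F}_q)$ with $(q^d-1)/(q-1) = \ell$. Family (i) is settled directly by Theorem 4.1 of \cite{Chabert II}, which already yields $\Po(K)_{nr1} = \Cl(K)$. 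For the other families, one applies the Main Theorem to produce the injection $\Po(K)/\Po(K)_{nr1} \hookrightarrow H_0/\langle H_0'\rangle$, reducing the problem to computing the right-hand side inside each $G_0$. In the sporadic cases the socle of $G_0$ is simple ($\PSL(2,11)$, $M_{11}$, or $M_{23}$), so the normal closure of $H_0'$ in $G_0$ exhausts $G_0$, and its intersection with $H_0$ is all of $H_0$. Similarly for $\PSL(d,q)$ with $d \geq 3$, or with $d = 2$ but $G_0 \not\leq P\Sigma L(2,q)$, the Galois–automorphism piece beyond $\PSL$ contributes nothing new to $H_0/\langle H_0'\rangle$. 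In each of these cases one therefore obtains $\Po(K) = \Po(K)_{nr1}$, and a Chebotarev density argument modeled on \cite[Section 4]{Chabert II} upgrades this to $\Po(K)_{nr1} = \Cl(K)$.

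The delicate case is the exception $\PSL(2,q) \leq G_0 \leq P\Sigma L(2,q)$. Here the stabilizer of a point of $\mathbb{P}^1(\mathbb{F}_q)$ inside $\PSL(2,q)$ is the Borel subgroup $B = \mathbb{F}_q \rtimes \mathbb{F}_q^{\times}$, with derived subgroup $\mathbb{F}_q$, so $B/B' \cong \mathbb{F}_q^{\times}$ is cyclic of order $q - 1$. Primality of $\ell = q+1$ forces $q$ to be a power of $2$, making $\ell$ a Fermat prime and $\ell - 2 = q - 1$. Extending from $\PSL$ to $G_0 \leq P\Sigma L(2,q)$ adjoins only a cyclic group of field automorphisms, so the relevant quotient $H_0/\langle H_0'\rangle$ remains cyclic of order dividing $\ell - 2$; the Main Theorem then yields the asserted bound on $\Cl(K)/\Po(K)_{nr1}$. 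The equalities $\Po(K)_{nr1} = \Po(K)_{nr} = \Po(K)$ come out of the same computation once one checks that the Ostrowski classes coming from ramified primes and from higher-residue-degree unramified primes all project to zero under the homomorphism $\Phi$.

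The main obstacle will be the case-by-case verification in family (iii) for $d \geq 3$ and in the sporadic family: one must identify the precise point-stabilizer (a maximal parabolic in the projective case, and respectively $A_5$, $M_{10}$, $M_{22}$ in the sporadic ones), verify that the normal closure $\langle H_0'\rangle$ really swallows $H_0$ inside $G_0$, and then execute a Chebotarev argument tailored to these groups showing $\Cl(K) \subseteq \Po(K)_{nr1}$. Once these pieces are in place, the $A_5$-field assertion is simply the specialization $q = 4$, $\ell = 5$ of the $\PSL(2,q)$ analysis, and Conjecture \ref{conj, A5-fields} follows.
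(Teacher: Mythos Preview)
Your plan has the right overall shape, but there is a genuine gap in the group-theoretic input. The codomain of $\Phi$ in the Main Theorem is $H_0/\langle T_0, H_0'\rangle$, not $H_0/H_0'$; the subset $T_0$ of derangements-off-$H_0$ is essential, and the paper (following Zantema) proves the non-exceptional cases precisely by invoking the known computation that $\langle T_0, H_0'\rangle = H_0$ there. Your substitute argument, that ``the normal closure of $H_0'$ in $G_0$ exhausts $G_0$, and its intersection with $H_0$ is all of $H_0$'', does not yield $H_0' = H_0$: trivially the intersection of $G_0$ with $H_0$ is $H_0$, but that says nothing about the abelianization of $H_0$. Concretely, for $G_0 = M_{11}$ the point-stabilizer is $H_0 \cong M_{10}$ with $H_0/H_0' \cong C_2$, and for the maximal parabolic in $\PSL(d,q)$ with $d \geq 3$ the abelianization is typically a nontrivial quotient of $\mathbb{F}_q^{\times}$. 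So without $T_0$ your bound $\Po(K)/\Po(K)_{nr1} \hookrightarrow H_0/H_0'$ does not collapse, and you would not obtain $\Po(K)_{nr1} = \Cl(K)$ in these cases. The paper sidesteps this entirely by quoting Theorem \ref{theorem, CH}, which already gives that $\Cl(K)/\Po(K)_{nr1}$ is a quotient of $H_0/\langle T_0, H_0'\rangle$; your proposed Chebotarev step is essentially a rederivation of that theorem.

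Second, you underestimate the exceptional case. The sentence ``the equalities come out of the same computation once one checks that the Ostrowski classes coming from ramified primes \dots project to zero under $\Phi$'' is where almost all of the work lies. In the paper this check occupies the bulk of the proof: one must classify the solvable subgroups of $\PSL(2,q)$ that can occur as decomposition and inertia groups, analyze two separate ramification patterns (the paper's Cases (1) and (2)), and exploit repeatedly that $q-1$ is odd while the obstructions are $2$-power torsion. None of this is automatic from the Borel description $B \cong \mathbb{F}_q \rtimes \mathbb{F}_q^{\times}$ alone, and the Main Theorem by itself only gives $\Ker(\Phi) \subseteq \Po(K)_{nr1}$, not the reverse containment you need. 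Until you supply this analysis, the claimed equality $\Po(K)_{nr1} = \Po(K)$ in the $\PSL(2,q)$ case --- and hence Conjecture \ref{conj, A5-fields} --- remains unproved.
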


Finally, in Section \ref{Section, even dihedral fields}, using a different approach, namely using the decomposition form of primes in even dihedral number fields, we prove that Conjecture \ref{conj, D4-fields} holds more generally for all $D_n$-fields with $n \geq 4$ an even integer.

\medspace

\begin{theorem*}[\textbf{Theorem \ref{theorem, our results for Dn-fields}}]
	Let $K$ be a $D_n$-field, where $n \geq 4$ is an even integer. If $h_K=2$, then $\Po(K)_{nr}=0$. In particular, Conjecture  \ref{conj, D4-fields} is true.
\end{theorem*}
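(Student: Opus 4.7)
The overall strategy is to reduce, via the Chabert--Halberstadt identity $\Po(K)_{nr1}=\Po(K)_{nr}$ for $D_n$-fields \cite[Proposition 5.1]{Chabert II}, to showing $[\Pi_p(K)]=0$ in $\Cl(K)$ for every prime $p$ unramified in $K$. I would write $G=\Gal(N_K/\mathbb{Q})\cong D_n=\langle r,s\mid r^n=s^2=1,\,srs=r^{-1}\rangle$ and $H=\Gal(N_K/K)=\langle s\rangle$ (the other $G$-conjugacy class of non-central involutions is handled symmetrically). Since the core of $H$ in $G$ is trivial, unramifiedness in $K$ coincides with unramifiedness in $N_K$, so Frobenius elements in $G$ are well-defined.

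The central observation is that, because $n$ is even, $\tau:=r^{n/2}$ lies in the center of $D_n$ and hence normalizes $H$. Consequently $\tau(K)=K$, and $\tau|_K$ is a nontrivial order-$2$ automorphism of $K$, inducing an action $\tau_*$ on $\Cl(K)$. Under the hypothesis $h_K=2$, the group $\Cl(K)\cong\mathbb{Z}/2\mathbb{Z}$ admits no nontrivial automorphism, so $\tau_*$ is the identity on $\Cl(K)$.

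For $p$ unramified with a Frobenius $\sigma\in G$, I would compute $\Pi_p(K)$ via the $\langle\sigma\rangle$-orbits on $G/H\cong\{r^aH:a\in\mathbb{Z}/n\}$, whose sizes are precisely the residue degrees of the primes of $K$ above $p$. A direct case check over the five conjugacy classes of $D_n$ (the identity, the central $r^{n/2}$, $\{r^{\pm k}\}$ for $1\le k<n/2$, even reflections, odd reflections) shows that the only cases producing degree-$1$ primes are $\sigma=1$, giving $\Pi_p(K)=(p)$ principal, and $\sigma$ an \emph{even} reflection (conjugate to $s$), say $\sigma=r^{2i}s$, in which case exactly two cosets are fixed, namely $r^iH$ and $r^{i+n/2}H$, so $\Pi_p(K)=\mathfrak{p}_1\mathfrak{p}_2$ with $\mathfrak{p}_2=\tau(\mathfrak{p}_1)$. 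In the remaining three conjugacy classes no coset is fixed, so $\Pi_p(K)=\mathcal{O}_K$.

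To conclude, in the only nontrivial case, $[\mathfrak{p}_2]=\tau_*[\mathfrak{p}_1]=[\mathfrak{p}_1]$ by the central observation, hence $[\Pi_p(K)]=2[\mathfrak{p}_1]=0$ in $\Cl(K)$. The main technical obstacle is the orbit--coset bookkeeping: one must verify the fixed-coset computation uniformly across the two conjugacy classes of reflections in $G$ and across the parities of $n/2$ modulo $2$ (the case $n\equiv 2\pmod 4$ being slightly different in the intermediate field structure), and confirm that $\tau$ indeed swaps the two degree-$1$ primes in every instance. Beyond these checks, nothing is required but the central action of $\tau$ and the triviality of $\mathrm{Aut}(\mathbb{Z}/2\mathbb{Z})$.
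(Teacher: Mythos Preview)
Your argument is correct and follows a genuinely different path from the paper's. Both proofs reduce via \cite[Proposition~5.1]{Chabert II} to showing $[\Pi_p(K)]=0$ for every unramified $p$, and both locate the only nontrivial case as the Frobenius conjugacy class of the reflection generating $H$, where $\Pi_p(K)=\mathfrak{P}_1\mathfrak{P}_2$ with two degree-one primes. The divergence is in how $[\mathfrak{P}_1\mathfrak{P}_2]=0$ is obtained. The paper passes to the index-two subfield $E=K^{\langle\tau\rangle}$: invoking Theorem~\ref{theorem, CH-Dn fields} and the surjectivity of $N_{K/E}$ on class groups it forces $h_E=2$, hence $N_{K/E}$ is a bijection; then it identifies $\Pi_p(K)=\mathfrak{p}\mathcal{O}_K$ with $\mathfrak{p}=\mathfrak{P}_1\cap E$, so $N_{K/E}(\Pi_p(K))=\mathfrak{p}^2$ is principal, and injectivity of the norm finishes. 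You instead stay inside $K$ and exploit the central involution $\tau=r^{n/2}$ directly: since $\mathrm{Aut}(\mathbb{Z}/2\mathbb{Z})$ is trivial, $\tau_*=\mathrm{id}$ on $\Cl(K)$, and your coset computation shows $\mathfrak{P}_2=\tau(\mathfrak{P}_1)$, whence $[\Pi_p(K)]=[\mathfrak{P}_1]\cdot\tau_*[\mathfrak{P}_1]=[\mathfrak{P}_1]^2=0$. Your route is shorter and more self-contained---it needs neither Theorem~\ref{theorem, CH-Dn fields} nor any information about $h_E$ or the norm map---while the paper's route keeps visible the structural identity $\Po(K)_{nr}=\epsilon_{K/E}(\Cl(E))$. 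One minor remark: your caveat about the parity of $n/2$ is unnecessary; the fixed-coset count ($a\equiv i\pmod{n/2}$) and the $\tau$-swap $r^iH\leftrightarrow r^{i+n/2}H$ hold uniformly for all even $n\ge 4$, regardless of $n\bmod 4$.
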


\vspace*{0.2cm}

\section{Image of P\'olya group under the Artin isomorphism} \label{section, image of Po(K)}

Zantema \cite[Section 6]{Zantema} gave a group theoretical interpretation of pre-P\'olyaness condition for number fields $K$. Using this approach, Chabert and Halberstadt \cite[Proposition 2.3]{Chabert II} determined the Artin symbols corresponding to the Ostrowski ideals $\Pi_{p^f}(K)$ above non-ramified primes $p$. In this section, we use Zantema's approach \cite[Section 8]{Zantema}, to describe the image of the P\'olya group $\Po(K)$ under the Artin isomorphism, in which we also consider the Ostrowski ideals above ramified primes. Then we present the main result of this paper, namely Theorem \ref{theorem, the composite maps}, which relates $\Po(K)$ to some quotient factor of the Galois group of the Galois closure of $K$ (over $\mathbb{Q}$).

From now on, 
we fix the following notations:
\begin{itemize}
	\item[] $N_K$: the Galois closure of $K$ over $\mathbb{Q}$;
	\item[] $G_0=\Gal(N_K/\mathbb{Q})$;
	\item[] $H_0=\Gal(N_K/K)$;
	\item[] $H_K$: the Hilbert class field of $K$;
	\item[] $N_{H_K}$: the Galois closure of $H_K$ over $\mathbb{Q}$;
\item[] $G=\Gal(N_{H_K}/\mathbb{Q})$;
\item[] $H=\Gal(N_{H_K}/K)$;
\item[] $H_1=\Gal(N_{H_K}/H_K)$;
\item[] $H_2=\Gal(N_{H_K}/N_K)$.  
\end{itemize}

\begin{remark}
Note that $H_2=\cap_{g \in G} gH^{-1}g$, $G_0=G/H_2$, and $H_0=H/H_2$.
\end{remark}

Let $p$ be a prime number, and fix a prime ideal $\mathfrak{P}$ of $N_H$ above $p$. Let $Z(\mathfrak{P}/p)$ and $T(\mathfrak{P}/p)$ be the decomposition group and the inertia group of $N_H/\mathbb{Q}$ at $\mathfrak{P}$, respectively. The group $G$ acts on the set of $n$ cosets $Hs$ ($s \in G$) as
\begin{equation*}
	(Hs)^g:=Hsg^{-1}, \quad \forall g \in G,
\end{equation*}
see \cite[Section 8]{Zantema}. Under this action, let $g_1,g_2,\dots,g_t$ be the lengths of the orbits $Z(\mathfrak{P}/p)$. Since $T(\mathfrak{P}/p)$ is a normal subgroup of $Z(\mathfrak{P}/p)$, each orbit of $Z(\mathfrak{P}/p)$, say the $i$-th orbit of length $g_i$, splits into $f_i$ orbits of $T(\mathfrak{P}/p)$ of equal length, say $e_i$.

\begin{proposition} \cite[Section 8]{Zantema} \label{proposition, Zantema result for phii's and psii's}
For a prime number $p$, keep the notations as above. For every $i=1,2,\dots,t$, choose $s_i \in G$   such that $Hs_i$ is in the $i$-th orbit of $Z(\mathfrak{P}/p)$. Then the following assertions hold:
\begin{itemize}
	\item[(1)]
	 We have
	\begin{equation*}
		p \mathcal{O}_K=\prod_{i=1}^{t} (s_i(\mathfrak{P})\cap K)^{e_i}
	\end{equation*}
	is the decomposition form of $p$ in $K/\mathbb{Q}$, where $N_{K/\mathbb{Q}}(s_i(\mathfrak{P})\cap K)=p^{f_i}$.
	
	\vspace*{0.2cm}
	\item[(2)]  Let $\frac{Z(\mathfrak{P}/p}{T(\mathfrak{P}/p)}=\langle g_{\mathfrak{P}}\rangle $ for some $g_{\mathfrak{P}} \in G$. Then for every $i=1,2,\dots,t$, there exists $a_i \in T(\mathfrak{P}/p)$ such that
		\begin{equation*}
				s_ig_{\mathfrak{P}}^{f_i}a_is_i^{-1} \in H.
			\end{equation*}
	Moreover, for an  integer $f \geq 1$, the ideal $\Pi_{p^f}(K)$ is principal if and only if 
	\begin{equation*}
			\prod_{\{i | f_i=f\}}	s_ig_{\mathfrak{P}}^{f_i}a_is_i^{-1} \in H_1.
	\end{equation*}
	
\end{itemize}

\end{proposition}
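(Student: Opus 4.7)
The plan is to treat the two parts separately, using the tower
$\mathbb{Q} \subseteq K \subseteq N_K \subseteq N_{H_K}$ together with the
Artin identification $\Gal(H_K/K) \cong H/H_1$ and the standard right action
$(Hs)^g := Hs g^{-1}$ of $G$ on the coset space $G/H$.

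\textbf{Part (1).} I would invoke the classical dictionary between the
$Z(\mathfrak{P}/p)$-orbits on $G/H$ and the primes of $K$ above $p$: the coset
$Hs_i$ corresponds to the prime $\mathfrak{p}_i := s_i(\mathfrak{P}) \cap K$
of $K$, and this assignment is well-defined on $Z$-orbits (changing the coset
representative preserves the prime because $H$ fixes $K$, and two cosets in
the same $Z$-orbit give conjugates of $\mathfrak{P}$ whose restrictions to
$K$ coincide). Inside such a $Z$-orbit, the normal subgroup
$T(\mathfrak{P}/p)$ partitions the orbit into $f_i$ sub-orbits of common length
$e_i$; a routine comparison with the inertia subgroup
$(s_i T(\mathfrak{P}/p) s_i^{-1}) \cap H$ of $s_i(\mathfrak{P})$ in $N_{H_K}/K$
identifies $e_i$ with the ramification index and $f_i$ with the residue degree
of $\mathfrak{p}_i$. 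The factorisation
$p \mathcal{O}_K = \prod_{i=1}^{t} \mathfrak{p}_i^{e_i}$ and the norm assertion
then follow.

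\textbf{Part (2).} Here Artin reciprocity reduces principality of
$\Pi_{p^f}(K)$ to the vanishing modulo $H_1$ of the product of Frobenius
elements attached to the $\mathfrak{p}_i$ with $f_i = f$. Now
$s_i g_{\mathfrak{P}} s_i^{-1}$ is a lift of Frobenius for $s_i(\mathfrak{P})$
in $N_{H_K}/\mathbb{Q}$, and the Frobenius of $\mathfrak{p}_i$ in $N_{H_K}/K$
is obtained by raising to the residue-degree power $f_i$ and then adjusting
within the inertia $s_i T(\mathfrak{P}/p) s_i^{-1}$ so as to land in $H$.
The existence of such an adjustment $a_i \in T(\mathfrak{P}/p)$ reflects the
orbit analysis of part~(1): the $T$-orbit of $Hs_i$ is stabilised as a set by
$g_{\mathfrak{P}}^{f_i}$, which, using the normality of $T$ in $Z$ to move the
inertia factor past the Frobenius, translates to
$s_i g_{\mathfrak{P}}^{f_i} a_i s_i^{-1} \in H$. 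Taking the product over
$\{i : f_i = f\}$ and reducing modulo $H_1$ yields the Artin image of
$[\Pi_{p^f}(K)]$, whence the stated principality criterion.

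The one point that will need genuine care is that the product
$\prod s_i g_{\mathfrak{P}}^{f_i} a_i s_i^{-1}$, viewed modulo $H_1$, is
independent of the choices of $g_{\mathfrak{P}} \in Z$ modulo $T$ and of $a_i$
within its $T$-coset. This is exactly where the unramifiedness of $H_K/K$ must
enter: by the short exact sequence of inertia subgroups in the tower
$K \subseteq H_K \subseteq N_{H_K}$, any inertia element at a prime of
$N_{H_K}$ above $p$ that happens to lie in $H$ must in fact lie in $H_1$, so
the ambiguity coming from the inertia is killed upon passage to $H/H_1$.
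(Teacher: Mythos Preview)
The paper does not supply its own proof of this proposition: it is stated with the citation \cite[Section 8]{Zantema} and used as a black box, with the subsequent Corollary~\ref{corollary, Po(K) under Artin map} justified only by ``an argument similar to \cite[Proof of Proposition 2.3]{Chabert II}''. So there is nothing in the paper to compare your argument against line by line.

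That said, your sketch is a correct reconstruction of the standard argument (and, as far as one can tell from Zantema's exposition, essentially the intended one). Part~(1) is exactly the usual orbit--prime dictionary for the intermediate field $K$ in a Galois tower. In part~(2) you have identified the two substantive points: first, that $g_{\mathfrak{P}}^{f_i}$ stabilises the $T$-orbit of $Hs_i$ setwise, which together with the normality of $T$ in $Z$ gives the element $a_i\in T$ with $s_i g_{\mathfrak{P}}^{f_i} a_i s_i^{-1}\in H$; second, that this element, read modulo $H_1$, is the Artin symbol $\left(\frac{H_K/K}{s_i(\mathfrak{P})\cap K}\right)$, so that the product over $\{i:f_i=f\}$ computes $\alpha_K([\Pi_{p^f}(K)])$. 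Your closing remark about well-definedness is also on target: since $H_K/K$ is unramified, the inertia subgroup of any prime of $N_{H_K}$ in $\Gal(N_{H_K}/K)=H$ already lies in $\Gal(N_{H_K}/H_K)=H_1$, so the ambiguity in the choice of $a_i$ (and of the lift $g_{\mathfrak{P}}$) disappears modulo $H_1$. This is precisely the mechanism the paper relies on later, e.g.\ in the relation \eqref{equation, T cap H subset H1}.
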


\begin{remark}
	If $p$ is unramified in $K/\mathbb{Q}$, then $T(\mathfrak{P}/p)$ is trivial. In this case, Proposition \ref{proposition, Zantema result for phii's and psii's} coincides with \cite[Proposition 2.3]{Chabert II}.
\end{remark}

Using Proposition \ref{proposition, Zantema result for phii's and psii's} and by an argument similar to \cite[Proof of Proposition 2.3]{Chabert II},  one can describe the Ostrowski ideals $\Pi_{p^f}(K)$ in terms of the Artin symbols.

\begin{corollary} \label{corollary, Po(K) under Artin map}
With the notations of this section, for every prime number $p$ and every integer $f \geq 1$, one has
\begin{equation*}
	\left(\frac{H_K/K}{\Pi_{p^f}(K)}\right)=\prod_{\{i | f_i=f\}}	s_ig_{\mathfrak{P}}^{f_i}a_is_i^{-1} |_{H_K}.
\end{equation*}
In other words, under the Artin isomorphism $\Cl(K)\stackrel{\alpha_K}{\simeq} H/H_1$, for every prime number $p$ and every integer $f 
\geq 1$, we have
\begin{equation}
	\alpha_K\left([\Pi_{p^f}(K)]\right)= \prod_{\{i | f_i=f\}}s_ig_{\mathfrak{P}}^{f_i}a_is_i^{-1} \, (\mathrm{mod}\, H_1) .
\end{equation}

\end{corollary}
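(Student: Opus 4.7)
The plan is to compute the Artin symbol $\left(\frac{H_K/K}{\mathfrak{p}_i}\right)$ at each prime $\mathfrak{p}_i=s_i(\mathfrak{P})\cap K$ of $K$ above $p$ individually, and then use multiplicativity of the Artin map together with the decomposition $\Pi_{p^f}(K)=\prod_{\{i\mid f_i=f\}}\mathfrak{p}_i$ furnished by Proposition \ref{proposition, Zantema result for phii's and psii's}(1). Because $H_K/K$ is everywhere unramified, the Artin symbol makes sense at every prime of $K$; this is what allows the ramified primes of $\mathbb{Q}$ to be handled on the same footing as the unramified ones treated in \cite[Proposition 2.3]{Chabert II}.

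For a fixed index $i$, the prime $s_i(\mathfrak{P})$ of $N_{H_K}$ lies above $\mathfrak{p}_i$ by construction, with decomposition and inertia groups $s_iZ(\mathfrak{P}/p)s_i^{-1}$ and $s_iT(\mathfrak{P}/p)s_i^{-1}$ in $G$. Since $g_{\mathfrak{P}}$ lifts a generator of $Z(\mathfrak{P}/p)/T(\mathfrak{P}/p)$, the conjugate $s_ig_{\mathfrak{P}}s_i^{-1}$ is a Frobenius lift at $s_i(\mathfrak{P})$ for $N_{H_K}/\mathbb{Q}$. The corresponding Frobenius for $N_{H_K}/K$ is obtained by raising to the $f_i$-th power, and this element must lie in $H=\Gal(N_{H_K}/K)$ modulo the inertia; this is exactly the content of Proposition \ref{proposition, Zantema result for phii's and psii's}(2), which selects an $a_i\in T(\mathfrak{P}/p)$ such that $s_ig_{\mathfrak{P}}^{f_i}a_is_i^{-1}\in H$.

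Since $H_K/K$ is unramified at $\mathfrak{p}_i$, the image of the inertia subgroup of $\Gal(N_{H_K}/K)$ at $s_i(\mathfrak{P})$ in $\Gal(H_K/K)\cong H/H_1$ is trivial, so restricting to $H_K$ yields an element independent of the auxiliary choice of $a_i$, and this element is precisely the Artin symbol at $\mathfrak{p}_i$:
\[
\left(\frac{H_K/K}{\mathfrak{p}_i}\right)=s_ig_{\mathfrak{P}}^{f_i}a_is_i^{-1}\big|_{H_K}.
\]
Multiplying over all $i$ with $f_i=f$ and transporting the identity through the Artin isomorphism $\alpha_K$ then yields the desired formula for $\alpha_K([\Pi_{p^f}(K)])$.

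The only substantive point to track, relative to the unramified argument in \cite[Proposition 2.3]{Chabert II}, is the inertia correction $a_i$: in the ramified setting the naive expression $s_ig_{\mathfrak{P}}^{f_i}s_i^{-1}$ need not belong to $H$, so one must absorb an inertia element to land inside $H$, and one must separately verify that this absorption becomes invisible after restriction to $H_K$. Once these two observations are made, the rest of the argument is a formal translation between the class field theory picture and the group theoretic description supplied by Proposition \ref{proposition, Zantema result for phii's and psii's}.
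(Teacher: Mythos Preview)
Your proposal is correct and follows precisely the approach the paper indicates: the paper merely states that the corollary follows from Proposition~\ref{proposition, Zantema result for phii's and psii's} ``by an argument similar to \cite[Proof of Proposition 2.3]{Chabert II}'', and what you have written is exactly a careful unpacking of that argument, with the expected adaptation that in the ramified case one inserts the inertia correction $a_i$ (from part (2) of the proposition) and then observes it vanishes upon restriction to $H_K$ because $H_K/K$ is everywhere unramified.
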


Zantema \cite[Section 6]{Zantema} showed that the Galois closures of ``some'' non-Galois pre-P\'olya fields coincide with the Galois closures of their Hilbert class fields.

\begin{theorem} \cite[Theorem 6.4]{Zantema} \label{theorem, Zantema's result when T is non-empty}
For a number field $K$ of degree $n \geq 3$, let $G$, $H$ and $H_1$ be defined as in this section. Under the action of $G$ on the set of $n$ cosets $Hs$ ($s \in G$), defined as $(Hs)^g:=Hsg^{-1}$, let
\begin{equation} \label{equation, set T}
T=\{h \in H \, : \, \forall s \in G \backslash H, \, Hsh \neq Hs \}.
\end{equation}
Assume that $T \neq \emptyset$ and $K$ is a pre-P\'olya field. Then 
 $T \subseteq H_1$, and the Hilbert class field of $K$ is contained in the Galois closure of $K$, i.e., 
  $H_K \subseteq N_K$.

\end{theorem}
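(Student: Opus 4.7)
The plan is to realize each $h \in T$ as the Artin symbol of an Ostrowski ideal above some unramified rational prime, and then invoke the pre-P\'olya hypothesis to conclude $h \in H_1$. First I would fix $h \in T$ and apply Chebotarev's density theorem to $N_{H_K}/\mathbb{Q}$ to produce an unramified rational prime $p$ together with a prime $\mathfrak{P}$ of $N_{H_K}$ above $p$ whose Frobenius equals $h$; since the Frobenius at a $G$-conjugate $g\mathfrak{P}$ is $g\cdot \mathrm{Frob}_{\mathfrak{P}}\cdot g^{-1}$, I may indeed choose $\mathfrak{P}$ so that $g_{\mathfrak{P}}=h$ exactly. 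Because $p$ is unramified in $N_{H_K}/\mathbb{Q}$, it is unramified in $K/\mathbb{Q}$ as well, $T(\mathfrak{P}/p)=1$, and every $a_i$ in Proposition \ref{proposition, Zantema result for phii's and psii's} may be taken trivial.

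The second step is to read off the decomposition of $p$ in $K$ from the $\langle h\rangle$-orbit structure on $H\backslash G$ under the action $(Hs)^g=Hsg^{-1}$. The coset $H$ is fixed by $h$ (since $h\in H$), while the defining property of $T$ says $Hsh\neq Hs$ for every $s\notin H$, so no other coset is fixed and every other $\langle h\rangle$-orbit has length at least $2$. Labeling $\{H\}$ as the first orbit with $s_1=1$, Proposition \ref{proposition, Zantema result for phii's and psii's}(1) gives $f_1=1$ and $f_i\geq 2$ for $i\geq 2$, so $\mathfrak{p}_1:=\mathfrak{P}\cap K$ is the unique prime of $K$ above $p$ of residue degree one and $\Pi_p(K)=\mathfrak{p}_1$. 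Pre-P\'olyaness of $K$ then makes $\Pi_p(K)$ principal, so $\alpha_K([\Pi_p(K)])=1$ in $H/H_1$; on the other hand, Corollary \ref{corollary, Po(K) under Artin map} evaluates this Artin image to $s_1 g_{\mathfrak{P}}^{f_1} a_1 s_1^{-1}=h\pmod{H_1}$. Hence $h\in H_1$, proving $T\subseteq H_1$.

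Finally, to deduce $H_K\subseteq N_K$ it suffices to show $H_2\subseteq H_1$. Fix any $h\in T$ (nonempty by hypothesis) and any $h_2\in H_2$. Since $H_2=\bigcap_{g\in G} gHg^{-1}$ is precisely the kernel of the $G$-action on $H\backslash G$, the element $h_2$ fixes every coset; in particular $hh_2\in H$ and for every $s\notin H$,
\[
Hs(hh_2)=(Hsh)h_2=Hsh\neq Hs,
\]
so $hh_2\in T$. By the first part both $h$ and $hh_2$ lie in $H_1$, whence $h_2\in H_1$, as required.

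The main obstacle I foresee is the Frobenius/Artin bookkeeping in the first step: one must arrange the Frobenius to be literally $h$ rather than merely a $G$-conjugate of $h$, and identify the orbit $\{H\}$ with the distinguished representative $s_1=1$, so that Corollary \ref{corollary, Po(K) under Artin map} yields the bare element $h$ and not some conjugate of it. Once these choices are made, the argument is a direct application of Chebotarev together with the orbit-theoretic dictionary supplied by Proposition \ref{proposition, Zantema result for phii's and psii's} and Corollary \ref{corollary, Po(K) under Artin map}.
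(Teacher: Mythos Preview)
Your proof is correct. The paper does not supply its own proof of this theorem (it is quoted from Zantema), but your argument is precisely the classical one: Chebotarev produces an unramified prime with Frobenius exactly $h$, the defining property of $T$ forces $\{H\}$ to be the unique length-one orbit of $\langle h\rangle$, so $\Pi_p(K)$ has a single factor whose Artin symbol is $h\pmod{H_1}$, and pre-P\'olyaness gives $h\in H_1$; the passage to $H_2\subseteq H_1$ via $hh_2\in T$ is exactly the content of part~(2) of the subsequent Proposition~\ref{proposition, Ch-H result for Po(K)nr1}. Your worry about arranging $g_{\mathfrak{P}}=h$ on the nose (rather than up to conjugacy) and choosing $s_1=1$ is legitimate but, as you note, resolved by replacing $\mathfrak{P}$ by the appropriate $g\mathfrak{P}$.
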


The above theorem, has been generalized by Chabert and Halberstadt (Recall that $\Po(K)_{nr1}$, as defined in \eqref{equation, pre-Polya group1}, is the subgroup of $\Cl(K)$ generated by all the classes of $\Pi_p(K)$ above unramified primes $p$).


\begin{proposition} \label{proposition, Ch-H result for Po(K)nr1} \cite[Section 3]{Chabert II}
Let $K$ be a number field of degree $n \geq 3$. With the notations of this section, the following assertions hold:
\begin{itemize}
	\item[(1)]  Under the Artin isomorphism $\Cl(K)\stackrel{\alpha_K}{\simeq} H/H_1$, we have
	\begin{equation*}
		\alpha_K^{-1}\left(T \, (\mathrm{mod}\, H_1)\right)\subseteq \Po(K)_{nr1},
	\end{equation*}
where $T$ is the set defined in \eqref{equation, set T}.
	In particular, if $\Po(K)_{nr1}=0$, then $T \subseteq H_1$.
	\vspace*{0.2cm}
		\item[(2)]  If $T \neq \emptyset$, then $H_2 \subseteq \langle T\rangle $.
		\vspace*{0.2cm}
	\item[(3)]  The factor group $\frac{\Cl(K)}{\Po(K)_{nr1}}$  is isomorphic to a quotient group of $\frac{H}{\langle H_1,T\rangle }$.
\end{itemize}
\end{proposition}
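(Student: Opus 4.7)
The plan is to prove the three parts sequentially; part (1) is the key statement, part (2) is an independent group-theoretic calculation, and part (3) follows almost formally from part (1).

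For part (1), fix $h\in T$ and view it as an element of $G=\Gal(N_{H_K}/\mathbb{Q})$. By Chebotarev's density theorem there exist an unramified prime $p$ of $\mathbb{Q}$ and a prime $\mathfrak{P}$ of $N_{H_K}$ above $p$ with Frobenius $g_{\mathfrak{P}}=h$, so $Z(\mathfrak{P}/p)=\langle h\rangle$ and $T(\mathfrak{P}/p)=\{1\}$. A coset $Hs$ is fixed by $h$ (i.e.\ $Hsh=Hs$) iff $h\in s^{-1}Hs$; for $s=e$ this is automatic, while for $s\notin H$ the defining property of $T$ gives $h\notin s^{-1}Hs$, so the orbit of $H$ under $\langle h\rangle$ has length $1$ and every other orbit has length $\geq 2$. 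By Proposition \ref{proposition, Zantema result for phii's and psii's}(1), exactly one prime $\mathfrak{p}$ of $K$ lies above $p$ with residue degree $1$, and all others have residue degree $\geq 2$. Thus $\Pi_p(K)=\mathfrak{p}$, and Corollary \ref{corollary, Po(K) under Artin map} applied with $s_1=e$, $f_1=1$, $a_1=1$ yields $\alpha_K([\Pi_p(K)])\equiv h\pmod{H_1}$. This places every class of $T$ modulo $H_1$ inside $\alpha_K(\Po(K)_{nr1})$, which is exactly (1); specializing to $\Po(K)_{nr1}=0$ forces $T\subseteq H_1$.

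For part (2), assume $T\neq\emptyset$ and fix some $h_0\in T$. Since $s^{-1}Hs$ is a subgroup and $h_0\notin s^{-1}Hs$ for $s\notin H$, the inverse $h_0^{-1}$ lies in $T$ as well. For $h\in H_2=\bigcap_{g\in G} gHg^{-1}$ we have $h\in s^{-1}Hs$ for every $s$, so the condition $Hs(h_0 h)=Hs$ is equivalent to $h_0 h\in s^{-1}Hs$, which (since $h\in s^{-1}Hs$) is equivalent to $h_0\in s^{-1}Hs$; the latter fails for $s\notin H$, hence $h_0 h\in T$. The identity $h=h_0^{-1}(h_0 h)$ now displays $h$ as a product of two elements of $T$, giving $H_2\subseteq\langle T\rangle$.

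For part (3), by (1) the image $\alpha_K(\Po(K)_{nr1})$ contains the subgroup $\langle T,H_1\rangle/H_1$ of $H/H_1$. The Artin isomorphism $\Cl(K)\simeq H/H_1$ therefore identifies $\Cl(K)/\Po(K)_{nr1}$ with a quotient of $(H/H_1)/(\langle T,H_1\rangle/H_1)\simeq H/\langle T,H_1\rangle$, as claimed. The main obstacle is the bookkeeping in part (1): one must convert the defining property of $T$, a statement about cosets, into the precise orbit profile of $\langle h\rangle$ acting on $G/H$, and then feed this profile into Proposition \ref{proposition, Zantema result for phii's and psii's} and Corollary \ref{corollary, Po(K) under Artin map} in order to identify the single prime of $K$ with norm $p$ and compute its Artin symbol. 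Once these translations are made, parts (2) and (3) reduce to short group-theoretic manipulations.
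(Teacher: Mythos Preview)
Your proof is correct. The paper does not give its own proof of this proposition; it simply cites \cite[Section 3]{Chabert II}, so there is no in-paper argument to compare against. Your approach---realize each $h\in T$ as the Frobenius of an unramified prime via Chebotarev, read off from the orbit structure that exactly one prime of $K$ above $p$ has residue degree $1$, and then invoke Corollary \ref{corollary, Po(K) under Artin map} to identify $\alpha_K([\Pi_p(K)])$ with $h\bmod H_1$---is precisely the argument given in the cited reference. Parts (2) and (3) are handled correctly as routine group-theoretic consequences; in particular your observation that $H_2\cdot T\subseteq T$ (so that any $h\in H_2$ factors as $h_0^{-1}(h_0h)$ with both factors in $T$) is the standard proof of (2).
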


As we can see, the set ``$T$'' is a key object in Theorem \ref{theorem, Zantema's result when T is non-empty} and Proposition \ref{proposition, Ch-H result for Po(K)nr1}, which is related to the action of $G$ on the cosets $Hs$. 
However, Chabert and Halberstadt \cite{Chabert II} showed that in some cases, one may consider the action of $G_0$ on the cosets $H_0s$, which makes the situation easier to investigate $\Po(K)$.

\begin{theorem} \label{theorem, CH} \cite[Theorem 3.11 and Remark 3.12]{Chabert II}
	With the notations of this section, denote by $\Omega_0$ the set formed by the right cosets of $H_0$ in $G_0$. Define the action of $G_0$ on $\Omega_0$ as 
\begin{equation*}
	(H_0s)^{g}:=H_0sg^{-1}, \quad \forall g \in G_0.
\end{equation*}
Let $T_0$ be the set formed by the elements of $H_0$, which have only $H_0$ as a fixed point. Then
\begin{equation*}
T_0 \neq \emptyset \iff T \neq \emptyset,
\end{equation*}
where $T$ is the set defined in \eqref{equation, set T}. Moreover, if $T_0 \neq \emptyset$, then
 $\frac{H}{\langle H_1,T\rangle }$ is a quotient group of  $\frac{H_0}{\langle T_0,H_0^{\prime}\rangle }$, where $H_0^{\prime}$ denotes the derived subgroup of $H_0$. Consequently, $\frac{\Cl(K)}{\Po(K)_{nr1}}$ is isomorphic to a quotient group of $\frac{H_0}{\langle T_0,H_0^{\prime}\rangle }$.
\end{theorem}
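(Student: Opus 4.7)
The plan is to exploit the fact that the permutation action of $G$ on the cosets $G/H$ factors through the action of $G_0 = G/H_2$ on $G_0/H_0$, since $H_2$ is normal in $G$ and is contained in $H$. Concretely, the natural bijection $Hs \mapsto H_0 \bar s$ (where $\bar s := sH_2$) intertwines the two actions via the quotient map $G \twoheadrightarrow G_0$. Consequently an element $h \in H$ fixes the coset $Hs$ if and only if its image $\bar h \in H_0$ fixes $H_0\bar s$. Letting $s$ range over coset representatives then gives $h \in T$ if and only if $\bar h \in T_0$; in particular $T \neq \emptyset \iff T_0 \neq \emptyset$, and crucially \emph{every} lift in $H$ of an element of $T_0$ already lies in $T$.

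For the second assertion I would start with the surjection $H \twoheadrightarrow H/\langle H_1,T\rangle$. The key input is Proposition \ref{proposition, Ch-H result for Po(K)nr1}(2), which (using $T \neq \emptyset$) gives $H_2 \subseteq \langle T\rangle \subseteq \langle H_1,T\rangle$. Hence the surjection annihilates $H_2$ and factors through a surjection $\varphi : H_0 \twoheadrightarrow H/\langle H_1,T\rangle$. It then suffices to check that the generators of $\langle T_0, H_0'\rangle$ lie in $\ker\varphi$. Given $\bar h \in T_0$, the previous paragraph furnishes a lift $h \in T$, so $\varphi(\bar h) = h \cdot \langle H_1,T\rangle$ is trivial. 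For the derived subgroup, observe that $H/H_1 \simeq \Cl(K)$ is abelian, so $H' \subseteq H_1$; since $H_0' = H'H_2/H_2$, any lift of an element of $H_0'$ lies in $H'H_2 \subseteq \langle H_1,T\rangle$, and its $\varphi$-image is trivial. Thus $\varphi$ descends to the required surjection $H_0/\langle T_0,H_0'\rangle \twoheadrightarrow H/\langle H_1,T\rangle$.

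The third assertion is then immediate from the composition of quotients: Proposition \ref{proposition, Ch-H result for Po(K)nr1}(3) already realizes $\Cl(K)/\Po(K)_{nr1}$ as a quotient of $H/\langle H_1,T\rangle$, and the second assertion realizes the latter as a quotient of $H_0/\langle T_0, H_0'\rangle$; transitivity of ``is a quotient of'' finishes the proof.

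The one point requiring care is the hypothesis $T \neq \emptyset$ that is needed to invoke $H_2 \subseteq \langle T\rangle$; without it the factorization of $\varphi$ through $H_0$ need not exist. Conceptually the theorem is saying that, as soon as there is \emph{some} fixed-point-free element of $H$ on the non-trivial cosets, the ambient group $G$ (which sees the Hilbert class field $H_K$) may be replaced by the intrinsic group $G_0 = \Gal(N_K/\mathbb{Q})$, at the cost of abelianizing via $H_0'$. Beyond faithfully tracking this chain of quotients I do not anticipate any further obstacle.
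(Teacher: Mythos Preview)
Your argument is correct. The identification of $T$ with the full preimage of $T_0$ under $H\twoheadrightarrow H_0$ is exactly right (and, as you note, already forces $H_2\subseteq\langle T\rangle$ once $T\neq\emptyset$, though you cite Proposition~\ref{proposition, Ch-H result for Po(K)nr1}(2) for this). The factorisation of $H\twoheadrightarrow H/\langle H_1,T\rangle$ through $H_0$ and the verification that $T_0$ and $H_0'$ die are clean; the key inputs $H'\subseteq H_1$ (abelianness of $\Cl(K)$) and $H_0'=H'H_2/H_2$ are used correctly.

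The paper does not prove this theorem itself---it is quoted from \cite{Chabert II}---but the proof of the Main Theorem (Theorem~\ref{theorem, the composite maps}) reveals the structure of the original argument: Chabert and Halberstadt pass through the image $V$ of the transfer map $\Ver_{G\to H}$ (and $V_0=\mathrm{Im}\,\Ver_{G_0\to H_0}$), building the chain
\[
\frac{H}{\langle H_1,T\rangle}\;\longrightarrow\;\frac{H}{\langle T,V\rangle}\;\simeq\;\frac{H_0}{\langle T_0,V_0\rangle}\;\longrightarrow\;\frac{H_0}{\langle T_0,H_0'\rangle}.
\]
Your route bypasses the transfer entirely, replacing the role of $V$ by the more elementary observation $H'\subseteq H_1$. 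This is genuinely simpler and yields the surjection directly. What the transfer-map approach buys is an explicit \emph{canonical} factorisation (used later in the paper to describe $\lambda$ concretely as ``reduction modulo $H_2$''), whereas your surjection, while perfectly adequate for the statement as written, is slightly less informative about the intermediate structure. For the purpose of proving Theorem~\ref{theorem, CH} alone, your argument is both shorter and self-contained modulo Proposition~\ref{proposition, Ch-H result for Po(K)nr1}.
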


There are some applications of the above theorem for number fields of varying degrees, along with several examples for small degrees \cite[Sections 4--6]{Chabert II}. 
\begin{example} \cite[Section 6]{Chabert II}
For an $A_5$-field $K$ we have
\begin{equation*}
	G_0=A_5, \quad H_0=A_4, \quad H_0^{\prime}=\{1, (12)(34), (13)(24), (14)(23)\}, \quad T_0=H_0^{\prime} \backslash \{1\},
\end{equation*}
see \cite[proof of Proposition 6.8]{Chabert II}. By Theorem \ref{theorem, CH}, $|\frac{\Cl(K)}{\Po(K)_{nr1}}|$ divides $3$. 
\end{example}

\begin{remark}
	On one hand, by Theorem \ref{theorem, CH}, $\frac{H}{\langle H_1,T\rangle }$ is a quotient group of  $\frac{H_0}{\langle T_0,H_0^{\prime}\rangle }$. On the other hand,  $\frac{H_0}{\langle T_0,H_0^{\prime}\rangle }$ is an abelian group. Hence $\frac{H}{\langle H_1,T\rangle }$ would be isomorphic to a subgroup of $\frac{H_0}{\langle T_0,H_0^{\prime}\rangle }$. Consequently, there exists an injection 
	\begin{equation} \label{equation, injectition map lambda}
		\lambda:	\frac{H}{\langle H_1,T\rangle } \hookrightarrow \frac{H_0}{\langle T_0,H_0^{\prime}\rangle }.
	\end{equation}
\end{remark}

Inspired by the above results, we obtain the main theorem of this paper as follows.
\begin{theorem}[\textbf{Main Theorem}] \label{theorem, the composite maps}
	 Let $K$ be a number field of degree $n \geq 3$. 
With the notations of this section, under the action of $G$ (resp. $G_0$) on the right cosets of $H$ in $G$ (resp. $H_0$ in $G_0$), let $T$ (resp. $T_0$) be the set formed by the elements of $H$ (resp. $H_0$) which have only $H$ (resp. $H_0$) as fixed point. Consider the composite map
	\begin{equation} \label{equation, composite map Lambda o theta}
		\Po(K) \xrightarrow{\alpha_K} \frac{H}{H_1} \xrightarrow{\theta}  \frac{H}{\langle H_1,T\rangle } \xrightarrow{\lambda} \frac{H_0}{\langle T_0,H_0^{\prime}\rangle },
	\end{equation}
where  $\alpha_K$ denotes the Artin isomorphism, $\theta$ is the canonical surjection, and  $\lambda$ is the embedding map \eqref{equation, injectition map lambda}. 
Define the map
\begin{equation} \label{equation, map Phi from Po(K) to H0/T0}
	\Phi : \Po(K)\rightarrow \frac{H_0}{\langle T_0,H_0^{\prime}\rangle }
\end{equation}
as 
\begin{equation*}
	\Phi:=\lambda \circ \theta \circ \alpha_K.
\end{equation*}
Then the following assertions hold:
\begin{itemize}
	\item[(i)]  For a prime number $p$, fix a prime ideal $\mathfrak{P}$ of $N_{H_K}$ above $p$. Let $Z(\mathfrak{P}/p)$ and $T(\mathfrak{P}/p)$ be the decomposition group and the inertia group of $N_{H_K}/\mathbb{Q}$ at $\mathfrak{P}$, respectively. Under the action of $G$ on the right cosets of $H$ in $G$, let $t$ be the number of the orbits of $Z(\mathfrak{P}/p)$, and let the $i$-th orbit splits into $f_i$ orbits of $T(\mathfrak{P}/p)$. For every $i\in\{1,2,\dots,t\}$, choose $s_i \in G$   such that $Hs_i$ is in the $i$-th orbit of $Z(\mathfrak{P}/p)$, and pick
	 $a_i \in T(\mathfrak{P}/p)$ such that
	\begin{equation*}
		s_ig_{\mathfrak{P}}^{f_i}a_is_i^{-1} \in H.
	\end{equation*} 
Then 
\begin{equation*}
	\left(s_i g_{\mathfrak{P}}^{f_i} a_i s_i^{-1} \right) |_{N_K}=\hat{s}_i g_{\mathfrak{P}_0}^{f_i} \hat{a}_i \hat{s}_i^{-1} \in H_0,\quad \text{for} \, \,  i=1,\dots,t,
\end{equation*}
where $\hat{s}_i=s_i |_{N_K}$, $\hat{a}_i=a_i |_{N_K}$, and $g_{\mathfrak{P}_0}$ is the  Frobenius element of $N_K/\mathbb{Q}$ at  $\mathfrak{P}_0:=\mathfrak{P} \cap N_K$.
Furthermore,
	\begin{equation*}
		\Phi\left([\Pi_{p^f}(K)]\right)=\prod_{\{i | f_i=f\}} \hat{s}_i g_{\mathfrak{P}_0}^{f_i} \hat{a}_i \hat{s}_i^{-1} \, (\mathrm{mod}\, \langle T_0, H_0^{\prime}\rangle ), \quad \forall f \in \mathbb{N}.
	\end{equation*}
	\vspace*{0.15cm}
	\item[(ii)] We have $\Ker(\Phi) \subseteq  \Po(K)_{nr1}$.

\end{itemize}

\end{theorem}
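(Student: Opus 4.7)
The plan is to identify the composition $\lambda\circ\theta$ with the homomorphism $H/\langle H_1,T\rangle\to H_0/\langle T_0,H_0'\rangle$ naturally induced by the restriction map $\pi\colon G\twoheadrightarrow G_0$, $\sigma\mapsto\sigma|_{N_K}$. This restriction has kernel $H_2$ and sends $H$ onto $H_0$. Three observations justify the identification: first, the condition $Hsh\ne Hs$ defining $T$ factors through $G/H_2$ because $H_2\subseteq H$, so $\pi(T)\subseteq T_0$; second, $\pi$ carries commutators to commutators, so $\pi(H')\subseteq H_0'$, and since $H'\subseteq H_1$ this already gives $H_0'\subseteq\pi(H_1)$; and third, $H_2\subseteq\langle T\rangle\subseteq\langle H_1,T\rangle$ by Proposition \ref{proposition, Ch-H result for Po(K)nr1}(2). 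Combined with the surjection from Theorem \ref{theorem, CH}, these imply that $\pi$ descends to a well-defined injection $\bar{\pi}\colon H/\langle H_1,T\rangle\hookrightarrow H_0/\langle T_0,H_0'\rangle$, which I would take as the realization of $\lambda$ from \eqref{equation, injectition map lambda}.

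For part (i), I would feed Corollary \ref{corollary, Po(K) under Artin map} into the composite \eqref{equation, composite map Lambda o theta}. Starting from
\[
\alpha_K\bigl([\Pi_{p^f}(K)]\bigr)\equiv\prod_{\{i\mid f_i=f\}}s_ig_{\mathfrak{P}}^{f_i}a_is_i^{-1}\pmod{H_1},
\]
I apply $\theta$ (which merely enlarges the modulus to $\langle H_1,T\rangle$) and then $\bar\pi$. Since the Frobenius at $\mathfrak{P}$ restricts to the Frobenius at $\mathfrak{P}_0=\mathfrak{P}\cap N_K$ (so $g_{\mathfrak{P}}|_{N_K}=g_{\mathfrak{P}_0}$) and the inertia subgroup $T(\mathfrak{P}/p)$ restricts onto $T(\mathfrak{P}_0/p)$ (so $a_i|_{N_K}=\hat a_i\in T(\mathfrak{P}_0/p)$), each factor restricts as
\[
s_ig_{\mathfrak{P}}^{f_i}a_is_i^{-1}\big|_{N_K}=\hat{s}_ig_{\mathfrak{P}_0}^{f_i}\hat{a}_i\hat{s}_i^{-1}\in H_0.
\]
Taking the product and reducing modulo $\langle T_0,H_0'\rangle$ yields the stated formula for $\Phi([\Pi_{p^f}(K)])$.

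For part (ii), suppose $[\mathfrak{a}]\in\Po(K)$ satisfies $\Phi([\mathfrak{a}])=0$. Because $\lambda$ is injective, $\theta\circ\alpha_K([\mathfrak{a}])=0$, so $\alpha_K([\mathfrak{a}])$ lies in $\Ker(\theta)=\langle H_1,T\rangle/H_1$, the subgroup of $H/H_1$ generated by the images of $T$. Proposition \ref{proposition, Ch-H result for Po(K)nr1}(1) asserts that $\alpha_K^{-1}(T\bmod H_1)\subseteq\Po(K)_{nr1}$; because $\Po(K)_{nr1}$ is a subgroup of $\Cl(K)$ and $\alpha_K$ is an isomorphism, the preimage of the whole subgroup $\langle H_1,T\rangle/H_1$ also lies in $\Po(K)_{nr1}$, and therefore $[\mathfrak{a}]\in\Po(K)_{nr1}$.

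The main obstacle, as I see it, is cleanly justifying the identification $\lambda=\bar\pi$ in the first paragraph—specifically, confirming that $\pi(H_1)\subseteq\langle T_0,H_0'\rangle$ so that $\bar\pi$ is well-defined. This is equivalent to asking that the surjection $H_0/\langle T_0,H_0'\rangle\twoheadrightarrow H/\langle H_1,T\rangle$ from Theorem \ref{theorem, CH} be an isomorphism; I expect to recover this by revisiting the lifting-via-$\pi^{-1}$ construction of that surjection and using $H_2\subseteq\langle T\rangle$ together with the pullback properties of $T$ and $H_0'$. Once this compatibility is secured, both (i) and (ii) reduce to the formal manipulations above.
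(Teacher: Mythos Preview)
Your overall strategy and your handling of part~(ii) match the paper's proof exactly. The divergence is precisely at the obstacle you flag: establishing that the restriction $\pi$ descends to a well-defined map $\bar\pi\colon H/\langle H_1,T\rangle\to H_0/\langle T_0,H_0'\rangle$, i.e., that $\pi(H_1)\subseteq\langle T_0,H_0'\rangle$.

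Your observation~2 does not help here: it yields $H_0'=\pi(H')\subseteq\pi(H_1)$, which is the reverse of what you need, and there is no elementary reason why $\pi(H_1)$ should land in $\langle T_0,H_0'\rangle$. The paper does not try to verify this containment directly. Instead it unpacks the construction of $\lambda$ from the proof of \cite[Theorem~3.11]{Chabert II}, where the key intermediary is the \emph{transfer} (Verlagerung): writing $V=\Image(\Ver_{G\to H})$ and $V_0=\Image(\Ver_{G_0\to H_0})$, the injection $\lambda$ is realized as the composite
\[
\frac{H}{\langle H_1,T\rangle}\hookrightarrow\frac{H}{\langle T,V\rangle}\simeq\frac{H/H_2}{\langle T,V\rangle/H_2}\simeq\frac{H_0}{\langle T_0,V_0\rangle}\hookrightarrow\frac{H_0}{\langle T_0,H_0'\rangle}.
\]
The two middle isomorphisms are exactly ``reduce mod $H_2$'', i.e., your restriction map $\pi$; so once this factorization is in hand, the computation of $\Phi([\Pi_{p^f}(K)])$ proceeds just as you wrote. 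The missing ingredient in your outline is therefore the transfer map and the inclusions $V\subseteq H_1$ and $H_0'\subseteq V_0$ coming from \cite{Chabert II}; your hope of ``revisiting the lifting-via-$\pi^{-1}$ construction'' is pointing at the right source, but you should expect to find Verlagerung there rather than a purely set-theoretic argument with $T$ and $H_0'$.
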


\begin{proof}
	(1) Let $g_{\mathfrak{P}} \in G$ be the generator of $\frac{Z(\mathfrak{P}/p)}{T(\mathfrak{P}/p)}$. Using part (2) of Proposition \ref{proposition, Zantema result for phii's and psii's}, we have
	\begin{equation} \label{equation, image of Phi, first step}
	\Phi\left([\Pi_{p^f}(K)]\right)=\lambda \left(\prod_{\{i | f_i=f\}} s_i g_{\mathfrak{P}}^{f_i} a_i s_i^{-1} \, (\mathrm{mod}\, \langle H_1,T\rangle )\right).
	\end{equation}
	On the other hand, the injection map $\lambda$ is the composite map
	\begin{equation*}
		\frac{H}{\langle H_1,T\rangle } \hookrightarrow \frac{H}{\langle T,V\rangle } \simeq \frac{H/H_2}{\langle T,V\rangle /H_2} \simeq \frac{H_0}{\langle T_0,V_0\rangle } \hookrightarrow \frac{H_0}{\langle T_0,H_0^{\prime}\rangle },
	\end{equation*}
	where $V$ (resp. $V_0$) denotes the image of the \textit{transfer map} $\Ver_{G \rightarrow H}$ (resp. $\Ver_{G_0 \rightarrow H_0}$), see \cite[Proof of Theorem 3.11]{Chabert II}. Since $H_2=\Gal(N_H/N_K)$, considering the class of an element $s \in H$  modulo $H_2$ is equivalent to considering the restriction of $s$ to $N_K$. In particular,
	\begin{equation*}
	\left(s_i g_{\mathfrak{P}}^{f_i} a_i s_i^{-1} \right) |_{N_K} \in H/H_2=H_0, \quad \text{for} \, \, i=1,\dots,t,
	\end{equation*}
	and  \eqref{equation, image of Phi, first step} is equivalent to
	\begin{equation*}
		\prod_{\{i | f_i=f\}} \left(s_i g_{\mathfrak{P}}^{f_i} a_i s_i^{-1} \right) |_{N_K} \, (\mathrm{mod}\, \langle T_0,H_0^{\prime}\rangle ).
	\end{equation*}
 Finally, since $g_{\mathfrak{P}}$ is the generator of $\frac{Z(\mathfrak{P}/p)}{T(\mathfrak{P}/p)}$, we have
	\begin{equation*}
		g_{\mathfrak{P}}(x) \equiv x^p \, (\mathrm{mod}\, \mathfrak{P}), \quad \forall x \in \mathcal{O}_{N_H}.
	\end{equation*}
Thus for $\mathfrak{P}_0=\mathfrak{P} \cap N_K$, one has
	\begin{equation*}
	g_{\mathfrak{P}}(x) \equiv x^p \, (\mathrm{mod}\, \mathfrak{P}_0), \quad \forall x \in \mathcal{O}_{N_K},
\end{equation*}
which implies that $g_{\mathfrak{P}}|_{N_K}$ is the Frobenius element of $N_K/\mathbb{Q}$ at $\mathfrak{P}_0$.
	\vspace*{0.3cm}
	
(2)	Let $\left[\Pi_{p^f}(K)\right] \in \Ker(\Phi)$. 
	Since $\lambda$ is injective, we obtain
	\begin{equation*}
	\left[\Pi_{p^f}(K)\right] \in \Ker\left(\theta \circ \alpha_K\right).
	\end{equation*}
Thereby,
\begin{equation*}
\alpha_K \left([\Pi_{p^f}(K)] \right) \in \langle T \, (\mathrm{mod}\, H_1)\rangle .
\end{equation*}
By part (1) of Proposition \ref{proposition, Ch-H result for Po(K)nr1}, we get
\begin{equation*}
	[\Pi_{p^f}(K)] \in \alpha_K^{-1} \left(\langle T \, (\mathrm{mod}\, H_1)\rangle  \right) \subseteq \Po(K)_{nr1}.
\end{equation*}

\end{proof}

\vspace*{0.2cm}

\section{Application of the Main Theorem for $S_4$-fields} \label{Section, S4 fields}

As mentioned in Section \ref{section, background}, Chabert and Halberstadt \cite[Section 6]{Chabert II} conjectured that for a $S_4$-field $K$, we have $\Po(K)=\Cl(K)$, see Conjecture \ref{conj, S4-fields}. 
In this section, we show that this conjecture is true. To accomplish this, in addition to Theorem \ref{theorem, the composite maps}, we need several lemmas.

\begin{lemma} \label{lemma, degree M1}
	Let $K$ be a number field of degree $n \geq 3$. 
Let $M_1/K$  be the sub-extension of $H_K/K$ corresponding to the subgroup $\Po(K)_{nr1}$, obtained via the Galois correspondence and the Artin isomorphism $\Cl(K) \stackrel{\alpha_K}{\simeq} \Gal(H_K/K)$.   
	If the set $T_0$, as defined in Theorem \ref{theorem, CH}, is non-empty, then 
	\begin{equation*}
		[M_1 : K] \,  | \,  \# \frac{H_0}{\langle H_0^{\prime},T_0 \rangle}.
	\end{equation*}
\end{lemma}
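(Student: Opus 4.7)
The plan is to unpack what $[M_1:K]$ is in terms of class group data, and then feed this into Theorem \ref{theorem, CH}.

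First, I would apply the Galois correspondence together with the Artin isomorphism $\alpha_K \colon \Cl(K) \stackrel{\sim}{\to} \Gal(H_K/K)$. Since $M_1$ is the subfield of $H_K/K$ corresponding to the subgroup $\alpha_K(\Po(K)_{nr1}) \subseteq \Gal(H_K/K)$, we obtain
\begin{equation*}
\Gal(H_K/M_1) \simeq \Po(K)_{nr1}, \qquad \Gal(M_1/K) \simeq \Cl(K)/\Po(K)_{nr1}.
\end{equation*}
In particular,
\begin{equation*}
[M_1:K] \;=\; \# \bigl( \Cl(K)/\Po(K)_{nr1} \bigr).
\end{equation*}

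Next, under the hypothesis $T_0 \neq \emptyset$, Theorem \ref{theorem, CH} tells us that $\Cl(K)/\Po(K)_{nr1}$ is isomorphic to a quotient of $H_0/\langle T_0, H_0'\rangle$. Consequently,
\begin{equation*}
\# \bigl(\Cl(K)/\Po(K)_{nr1}\bigr) \;\bigm|\; \# \bigl(H_0/\langle H_0', T_0\rangle\bigr),
\end{equation*}
and combining this with the identification above yields the claimed divisibility.

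There is essentially no obstacle here: the lemma is a bookkeeping statement that translates the quotient $\Cl(K)/\Po(K)_{nr1}$ bounded by Theorem \ref{theorem, CH} into a statement about the degree of the class field $M_1$ attached to $\Po(K)_{nr1}$ via Galois theory. The only point to be careful about is invoking Theorem \ref{theorem, CH} in its correct form (which requires $T_0 \neq \emptyset$, exactly our hypothesis) so that $\Cl(K)/\Po(K)_{nr1}$ is actually a quotient of the explicit finite group $H_0/\langle H_0', T_0\rangle$ rather than merely receiving some map.
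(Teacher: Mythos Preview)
Your proof is correct and follows essentially the same approach as the paper: both identify $[M_1:K]$ with $\#\bigl(\Cl(K)/\Po(K)_{nr1}\bigr)$ via the Artin isomorphism and Galois correspondence, and then invoke Theorem~\ref{theorem, CH} (using the hypothesis $T_0 \neq \emptyset$) to conclude that this quantity divides $\#\bigl(H_0/\langle H_0', T_0\rangle\bigr)$. The only cosmetic difference is the order in which the two steps are presented.
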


\begin{proof}
	By Theorem \ref{theorem, CH}, the order of $\frac{\Cl(K)}{\Po(K)_{nr1}}$ is a divisor of the order of $\frac{H_0}{\langle H_0^{\prime},T_0\rangle }$. Now, the assertion follows from the isomorphisms below
	\begin{equation*}
\frac{\Cl(K)}{\Po(K)_{nr1}} \simeq	\frac{\Gal(H_K/K)}{\Gal(H_K/M_1)} \simeq \Gal(M_1/K).
	\end{equation*}
\end{proof}

\begin{remark} \label{remark, M1 over K is unramified}
	Note that by the Galois correspondence, $\Po(K)_{nr1}=0$ if and only if $M_1=H_K$. Also, since $M_1 \subseteq H_K$, $M_1/K$ is an unramified abelian extension.
\end{remark}

\begin{lemma} \label{lemma, Po(K)nr1 is equal to Ker(Phi) for S4-fields}
	Let $K$ be a $S_4$-field. Then $\Ker(\Phi)=\Po(K)_{nr1}$, where 
	\begin{equation*}
		\Phi : \Po(K) \rightarrow \frac{H_0}{\langle H_0^{\prime},T_0\rangle}
	\end{equation*}
	is the homomorphism map defined in Theorem \ref{theorem, the composite maps}.
\end{lemma}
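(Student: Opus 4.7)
The plan is to use Theorem \ref{theorem, the composite maps} part (ii) for the easy inclusion $\Ker(\Phi) \subseteq \Po(K)_{nr1}$, so the work is showing the reverse inclusion: every generator $[\Pi_p(K)]$ of $\Po(K)_{nr1}$ (with $p$ unramified in $K/\mathbb{Q}$) lies in $\Ker(\Phi)$.

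First I would identify the target of $\Phi$ concretely when $K$ is an $S_4$-field. Here $G_0 = S_4$ acts naturally on $4$ letters and $H_0$ is the stabilizer of a point, so $H_0 \simeq S_3$. An element of $H_0$ fixes only the coset $H_0$ precisely when, viewed in $S_4$, it fixes only that point; these are the two $3$-cycles on the remaining three letters. Thus $T_0$ consists of the two $3$-cycles, while $H_0' = A_3$ already contains them, so
\begin{equation*}
\langle T_0, H_0'\rangle = A_3, \qquad \frac{H_0}{\langle T_0, H_0'\rangle} \simeq \mathbb{Z}/2\mathbb{Z},
\end{equation*}
and the quotient map $H_0 \to H_0/\langle T_0, H_0'\rangle$ is the sign character of $S_3 \subset S_4$.

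Next, for an unramified prime $p$, I would invoke Theorem \ref{theorem, the composite maps} part (i) with $f=1$. Unramifiedness gives $T(\mathfrak{P}/p)=\{1\}$ and so each $\hat a_i = 1$. Writing $g := g_{\mathfrak{P}_0}$ for the Frobenius at $\mathfrak{P}_0 = \mathfrak{P}\cap N_K$, the formula reads
\begin{equation*}
\Phi\bigl([\Pi_p(K)]\bigr) = \prod_{\{i\,:\,f_i=1\}} \hat s_i\, g\, \hat s_i^{-1} \pmod{\langle T_0, H_0'\rangle}.
\end{equation*}
Each factor is an $S_4$-conjugate of $g$ (hence has the same sign as $g$), and by Proposition \ref{proposition, Zantema result for phii's and psii's} the integers $f_1,\dots,f_t$ are precisely the cycle lengths of $g$ acting on the $4$ cosets, with $\{i:f_i=1\}$ indexing the fixed points. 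Therefore, passing to $H_0/\langle T_0,H_0'\rangle \simeq \mathbb{Z}/2\mathbb{Z}$,
\begin{equation*}
\Phi\bigl([\Pi_p(K)]\bigr) \equiv \operatorname{sgn}(g)^{\,\mathrm{fix}(g)} \pmod 2,
\end{equation*}
where $\mathrm{fix}(g)$ is the number of fixed points of $g$ in the natural $S_4$-action.

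The key observation, specific to $S_4$, is that this product is always trivial: the odd elements of $S_4$ are transpositions (with $2$ fixed points) and $4$-cycles (with $0$ fixed points), each having an even number of fixed points, while even elements contribute trivially to any power. Hence $\operatorname{sgn}(g)^{\mathrm{fix}(g)} = 1$ for every $g \in S_4$, which gives $[\Pi_p(K)] \in \Ker(\Phi)$ for every unramified prime $p$. Since these classes generate $\Po(K)_{nr1}$, the inclusion $\Po(K)_{nr1} \subseteq \Ker(\Phi)$ follows. The only step that feels non-formal is the final parity calculation, and it is really a small coincidence in $S_4$; I would present it as a two-line case check on the five cycle types rather than search for a more conceptual reason.
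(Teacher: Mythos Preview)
Your proof is correct and follows essentially the same approach as the paper: both use part (ii) of Theorem \ref{theorem, the composite maps} for the inclusion $\Ker(\Phi)\subseteq \Po(K)_{nr1}$, and for the reverse inclusion both compute $\Phi([\Pi_p(K)])$ for unramified $p$ via the formula in part (i) and check it lands in $A_3$ by running through the conjugacy classes of the Frobenius in $S_4$. The only difference is cosmetic: the paper checks the two nontrivial cases $g_{\mathfrak{P}_0}=(1\,2)$ and $g_{\mathfrak{P}_0}=(1\,2\,3)$ by explicit choice of coset representatives, whereas you package the same case analysis as the parity identity $\operatorname{sgn}(g)^{\mathrm{fix}(g)}=1$ for all $g\in S_4$.
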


\begin{proof}
	Let $p$ be a prime which is not ramified in $K/\mathbb{Q}$. By Theorem \ref{theorem, the composite maps} we have
	\begin{equation*}
		\Phi \left([\Pi_{p^1}(K)]\right)=\prod_{\{i | f_i=1\}} \hat{s}_i g_{\mathfrak{P}_0} \hat{s}_i^{-1} \in H_0,	
	\end{equation*}
	where $g_{\mathfrak{P}_0}$ is the Frobenius element of $N_K/K$ at some prime $\mathfrak{P}_0$ above $p$, and $\hat{s}_i=s_i\mid_{N_K} \in G_0$, for all $i$ (The elements $s_i$'s and the integers $f_i$'s are defined in Theorem \ref{theorem, the composite maps}). Following Chabert and Halberstadt's method \cite[Section 6]{Chabert II}, since $\Gal(N_K/K) =H_0 \simeq S_3$, it is enough to show that 
	\begin{equation*}
		\prod_{\{i | f_i=1\}} \hat{s}_i g_{\mathfrak{P}_0} \hat{s}_i^{-1} \in H_0^{\prime} \simeq A_3,
	\end{equation*}
	for $g_{\mathfrak{P}_0}=(1 \, 2)$ and $g_{\mathfrak{P}_0}=(1 \, 2 \, 3)$. As suggested in \cite[Proof of Proposition 6.1]{Chabert II}, for $g_{\mathfrak{P}_0}=(1 \, 2)$, we can take $f_1=f_2=1$, $s_1=1$, and $s_2=(3 \, 4)$. Then, with the notations of Theorem \ref{theorem, the composite maps}, we have
	\begin{equation*}
		\Phi \left([\Pi_{p^1}(K)]\right)=\lambda \circ \theta \circ \alpha_K (\Pi_{p^1}(K))= \lambda \circ \theta (\prod_{i=1}^2 s_i g_{\mathfrak{P}} s_i^{-1})=\lambda \circ \theta \left( \underbrace{\left( (1\,2) (3 \, 4) \right)^2}_{1} \right)=1.
	\end{equation*}
	For $g_{\mathfrak{P}_0}=(1 \, 2 \, 3)$, we can take $f_1=1$, and $s_1=1$. Then
	\begin{equation*}
		\Phi \left([\Pi_{p^1}(K)]\right)=\prod_{\{i | f_i=1\}} \hat{s}_i g_{\mathfrak{P}_0} \hat{s}_i^{-1}= \hat{s}_1 g_{\mathfrak{P}}\mid_{N_K} \hat{s}_1^{-1} =g_{\mathfrak{P}_0}=(1 \, 2\, 3) \in H_0^{\prime}.
	\end{equation*}
	Hence $\Po(K)_{nr1} \subseteq \Ker(\Phi)$. The opposite inclusion is part (ii) of Theorem \ref{theorem, the composite maps}.
\end{proof}

\begin{corollary} \label{corollary, Ker(Phi) is equal to T mod H1 for S4-fields}
	Let $K$ be a $S_4$-field. Then $\alpha_K\left(\Po(K)_{nr1}\right)=\langle T \, (\mathrm{mod}\, H_1) \rangle$.
\end{corollary}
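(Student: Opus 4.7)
The plan is to deduce the corollary as a direct diagram chase from Lemma \ref{lemma, Po(K)nr1 is equal to Ker(Phi) for S4-fields}, using only that the map $\lambda$ of \eqref{equation, injectition map lambda} is injective and that $\theta$ is the canonical surjection $H/H_1\twoheadrightarrow H/\langle H_1,T\rangle$, so $\Ker(\theta)=\langle H_1,T\rangle/H_1=\langle T\,(\mathrm{mod}\,H_1)\rangle$.

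First, for the inclusion $\alpha_K(\Po(K)_{nr1})\subseteq \langle T\,(\mathrm{mod}\,H_1)\rangle$, I would observe that since $\Phi=\lambda\circ\theta\circ\alpha_K$ and $\lambda$ is injective, an element $x\in\Po(K)$ lies in $\Ker(\Phi)$ exactly when $\alpha_K(x)\in\Ker(\theta)=\langle T\,(\mathrm{mod}\,H_1)\rangle$. Lemma \ref{lemma, Po(K)nr1 is equal to Ker(Phi) for S4-fields} identifies $\Ker(\Phi)=\Po(K)_{nr1}$, so applying $\alpha_K$ yields the desired containment.

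For the reverse inclusion, part (1) of Proposition \ref{proposition, Ch-H result for Po(K)nr1} asserts $\alpha_K^{-1}(T\,(\mathrm{mod}\,H_1))\subseteq \Po(K)_{nr1}$. Applying $\alpha_K$ gives $T\,(\mathrm{mod}\,H_1)\subseteq \alpha_K(\Po(K)_{nr1})$; since $\alpha_K(\Po(K)_{nr1})$ is a subgroup of $H/H_1$, the subgroup it generates also lies inside it, i.e.\ $\langle T\,(\mathrm{mod}\,H_1)\rangle\subseteq \alpha_K(\Po(K)_{nr1})$.

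There is essentially no obstacle: once Lemma \ref{lemma, Po(K)nr1 is equal to Ker(Phi) for S4-fields} is available, the corollary is a formal consequence of the factorization of $\Phi$ through $\theta$ and the injectivity of $\lambda$, together with the inclusion supplied by Proposition \ref{proposition, Ch-H result for Po(K)nr1}(1). The only mildly delicate point is to remember that $\Ker(\theta)$ is precisely the subgroup $\langle T\,(\mathrm{mod}\,H_1)\rangle$ rather than merely the image of $T$, but this is immediate from the definition of $\theta$.
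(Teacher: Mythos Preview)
Your proof is correct and follows essentially the same route as the paper: the paper compresses your two inclusions into the single chain $\Ker(\Phi)\subseteq \alpha_K^{-1}\bigl(\langle T\,(\mathrm{mod}\,H_1)\rangle\bigr)\subseteq \Po(K)_{nr1}$ (extracted from the proof of Theorem~\ref{theorem, the composite maps}) and then invokes Lemma~\ref{lemma, Po(K)nr1 is equal to Ker(Phi) for S4-fields} to force equality throughout. Your explicit identification of $\Ker(\theta)$ with $\langle T\,(\mathrm{mod}\,H_1)\rangle$ and separate treatment of the two inclusions is just an unpacked version of the same argument.
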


\begin{proof}
	As we have seen in the proof of Theorem \ref{theorem, the composite maps}, we have
	\begin{equation*}
		\Ker(\Phi) \subseteq \alpha_K^{-1} \left(\langle T \, (\mathrm{mod}\, H_1) \rangle  \right) \subseteq \Po(K)_{nr1}.
	\end{equation*}
	Now, the assertion follows from Lemma \ref{lemma, Po(K)nr1 is equal to Ker(Phi) for S4-fields}.
\end{proof}

\begin{corollary} \label{corollary, Artin symbol for M1/K}
	Let $K$ be a $S_4$-field. Let $M_1/K$  be the subextension of $H_K/K$ corresponding to the subgroup $\Po(K)_{nr1}$. Then
	for every $f \in \mathbb{N}$, we have
	\begin{equation*}
		\theta \circ \alpha_K \left(\left[ \Pi_{p^f}(K)\right]\right)=\left(\frac{M_1/K}{\Pi_{p^f}(K)}\right)
	\end{equation*}
	where 
	\begin{equation*}
		\alpha_K:\Cl(K) \rightarrow \Gal(H_K/K)=\frac{H}{H_1}
	\end{equation*}
	denotes the Artin isomorphism, and
	\begin{equation*}
		\theta:\frac{H}{H_1} \rightarrow \frac{H}{\langle T,H_1\rangle }
	\end{equation*}
	is the canonical surjection. Also,
	$\left(\frac{M_1/K}{\Pi_{p^f}(K)}\right)$ denotes the Artin symbol of $M_1/K$ at $\Pi_{p^f}(K)$.
\end{corollary}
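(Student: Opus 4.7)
The plan is to recognize this as a purely formal consequence of Corollary \ref{corollary, Ker(Phi) is equal to T mod H1 for S4-fields} together with the compatibility of the Artin map under restriction to a subfield, with essentially no further computation required.

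First, I would unpack what $M_1$ is via the Galois correspondence. Since $M_1/K$ is, by definition, the subextension of $H_K/K$ fixed by $\alpha_K(\Po(K)_{nr1})$, one has
\[
\Gal(H_K/M_1) \;=\; \alpha_K\!\left(\Po(K)_{nr1}\right) \;\subseteq\; \Gal(H_K/K) \;=\; H/H_1.
\]
By Corollary \ref{corollary, Ker(Phi) is equal to T mod H1 for S4-fields}, the right-hand side equals $\langle T \,(\mathrm{mod}\,H_1)\rangle = \langle T, H_1\rangle / H_1$. Hence, by the third isomorphism theorem,
\[
\Gal(M_1/K) \;\simeq\; \frac{H/H_1}{\langle T, H_1\rangle / H_1} \;\simeq\; \frac{H}{\langle T, H_1\rangle},
\]
and under this identification the canonical surjection $\theta : H/H_1 \twoheadrightarrow H/\langle T, H_1\rangle$ coincides with the restriction map $\Gal(H_K/K) \to \Gal(M_1/K)$, $\sigma \mapsto \sigma|_{M_1}$.

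Second, I would invoke the standard functoriality of the Artin symbol. Since $M_1 \subseteq H_K$ is an unramified abelian subextension of $K$, every prime of $K$ (in particular every $\Pi_{p^f}(K)$, ramified or not in $K/\mathbb{Q}$) is unramified in $M_1/K$, so the Artin symbol $\left(\frac{M_1/K}{\Pi_{p^f}(K)}\right)$ is defined, and compatibility under restriction gives
\[
\left(\tfrac{M_1/K}{\Pi_{p^f}(K)}\right) \;=\; \left(\tfrac{H_K/K}{\Pi_{p^f}(K)}\right)\Big|_{M_1}.
\]
Combining this with the fact that $\alpha_K([\Pi_{p^f}(K)]) = \left(\tfrac{H_K/K}{\Pi_{p^f}(K)}\right)$ and that $\theta$ is the restriction map $\sigma \mapsto \sigma|_{M_1}$ under the identification above yields
\[
\theta \circ \alpha_K\!\left([\Pi_{p^f}(K)]\right) \;=\; \left(\tfrac{H_K/K}{\Pi_{p^f}(K)}\right)\Big|_{M_1} \;=\; \left(\tfrac{M_1/K}{\Pi_{p^f}(K)}\right),
\]
which is the claim.

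There is no real obstacle: the only substantive input is the equality $\alpha_K(\Po(K)_{nr1}) = \langle T, H_1\rangle/H_1$ from the previous corollary, which is special to the $S_4$-case, while everything after that is the standard dictionary between the Galois correspondence in $H_K/K$ and the Artin map. The one point to be mildly careful about is that $\Pi_{p^f}(K)$ may lie above primes $p$ ramified in $K/\mathbb{Q}$, but since $M_1/K$ is unramified the Artin symbol on the $K$-side is still well defined, so the functoriality argument applies uniformly in $f$ and $p$.
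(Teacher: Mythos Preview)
Your proposal is correct and follows essentially the same approach as the paper: both use Corollary~\ref{corollary, Ker(Phi) is equal to T mod H1 for S4-fields} to identify $\Gal(M_1/K)$ with $H/\langle T,H_1\rangle$, and then invoke the functoriality of the Artin symbol under restriction from $H_K$ to $M_1$. The only cosmetic difference is that the paper unpacks both sides via the explicit formula $\prod_{\{i\mid f_i=f\}} s_i g_{\mathfrak{P}}^{f_i} a_i s_i^{-1}$ from Corollary~\ref{corollary, Po(K) under Artin map} before matching them, whereas you argue directly that $\theta$ is the restriction map and apply functoriality; your version is slightly more streamlined but the content is the same.
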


\begin{proof}
	On one hand, by Corollary \ref{corollary, Po(K) under Artin map}, we have
	{\small 
		\begin{equation*}
			\theta \left(\alpha_K\left([\Pi_{p^f}(K)]\right)\right)= \theta \left(\prod_{\{i | f_i=f\}}s_ig_{\mathfrak{P}}^{f_i}a_is_i^{-1} \, (\mathrm{mod}\, H_1) \right)=\prod_{\{i | f_i=f\}}s_ig_{\mathfrak{P}}^{f_i}a_is_i^{-1} \, \left(  \mathrm{mod} \, \langle T,H_1\rangle\right).
	\end{equation*}}
	On the other hand, since $M_1 \subseteq H_K$, by \cite[Proposition 1.6]{NChildress}, we get
	{\small 
		\begin{equation*}
			\left(\frac{M_1/K}{\Pi_{p^f}(K)}\right)=\left(\frac{H_K/K}{\Pi_{p^f}(K)}\right)|_{M_1}=\prod_{\{i | f_i=f\}}	s_ig_{\mathfrak{P}}^{f_i}a_is_i^{-1} |_{M_1}=\prod_{\{i | f_i=f\}}	s_ig_{\mathfrak{P}}^{f_i}a_is_i^{-1} \,\left(  \mathrm{mod} \, \langle T,H_1\rangle\right).
	\end{equation*}}
Note that using Corollary \ref{corollary, Ker(Phi) is equal to T mod H1 for S4-fields}, the latter equality follows from
	\begin{equation*}
		\Gal(M_1/K)=\frac{\Gal(H_K/K)}{\Gal(H_K/M_1)}\simeq \frac{\alpha_K(\Cl(K))}{\alpha_K(\Po(K)_{nr1})}=\frac{H/H_1}{\langle T \,  (\mathrm{mod}\, H_1) \rangle}=\frac{H}{\langle T,H_1 \rangle}.
	\end{equation*}
\end{proof}


\begin{lemma} \label{lemma. M1 is either K or Kdisc for S4-fields}
	Let $K$ be a $S_4$-field.  
	Let $M_1/K$  be the sub-extension of $H_K/K$ corresponding to the subgroup $\Po(K)_{nr1}$, obtained via the Galois correspondence and the Artin isomorphism $\Cl(K) \stackrel{\alpha_K}{\simeq} \Gal(H_K/K)$. Then either $M_1=K$ or $M_1=K(\sqrt{\disc(K)})$.
\end{lemma}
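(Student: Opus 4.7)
The plan has two pieces: first, use Lemma~\ref{lemma, degree M1} together with a small group-theoretic computation to show $[M_1:K]\le 2$, and second, when $[M_1:K]=2$, force $M_1\subseteq N_K$, where $K(\sqrt{\disc(K)})$ is the only quadratic candidate.

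For the first step, I would identify the action of $G_0=S_4$ on the right cosets of $H_0$ with its usual action on $\{1,2,3,4\}$, so that $H_0\simeq S_3$ is the stabilizer of a point, say $4$, with derived subgroup $H_0'=A_3$. An element of $H_0$ lies in $T_0$ iff its only fixed point is $4$: the three transpositions of $S_3$ each fix an additional point, whereas the two $3$-cycles do not, so $T_0=\{(1\,2\,3),(1\,3\,2)\}\neq\emptyset$. Hence $\langle H_0',T_0\rangle=A_3$ and $|H_0/\langle H_0',T_0\rangle|=2$. Lemma~\ref{lemma, degree M1} then gives $[M_1:K]\in\{1,2\}$; the case $[M_1:K]=1$ yields $M_1=K$ immediately.

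Now assume $[M_1:K]=2$. By Corollary~\ref{corollary, Ker(Phi) is equal to T mod H1 for S4-fields}, $\alpha_K(\Po(K)_{nr1})=\langle T\,(\mathrm{mod}\,H_1)\rangle$, so the subgroup of $H=\Gal(N_{H_K}/K)$ fixing $M_1$ is $\langle H_1,T\rangle$. Since $T_0\neq\emptyset$, Theorem~\ref{theorem, CH} gives $T\neq\emptyset$, and Proposition~\ref{proposition, Ch-H result for Po(K)nr1}(2) then yields $H_2\subseteq \langle T\rangle\subseteq \langle H_1,T\rangle$. Therefore the fixed field of $\langle H_1,T\rangle$ is contained in the fixed field of $H_2$, which is $N_K$. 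This shows $M_1\subseteq N_K\cap H_K$, reducing the problem to classifying quadratic subextensions of $N_K/K$.

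For the last step, since $\Gal(N_K/K)=H_0\simeq S_3$ has the unique index-$2$ subgroup $A_3$, the unique quadratic subextension of $N_K/K$ is $N_K^{A_3}$. A short computation using $\mathbb{Q}(\sqrt{\disc(K)})=N_K^{A_4}$ together with $H_0\cap A_4=S_3\cap A_4=A_3$ identifies $N_K^{A_3}$ with $K\cdot\mathbb{Q}(\sqrt{\disc(K)})=K(\sqrt{\disc(K)})$, forcing $M_1=K(\sqrt{\disc(K)})$. The only substantive step is the reduction $M_1\subseteq N_K$; the initial group computation is routine, and the final identification is automatic. A pleasant byproduct is that, in the case $[M_1:K]=2$, the extension $K(\sqrt{\disc(K)})/K$ is automatically unramified (and abelian) since it coincides with the unramified abelian extension $M_1$ of $K$.
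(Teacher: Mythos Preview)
Your argument is correct and follows essentially the same route as the paper: bound $[M_1:K]$ by $2$ via Lemma~\ref{lemma, degree M1} and the computation $H_0/\langle H_0',T_0\rangle\simeq\mathbb{Z}/2\mathbb{Z}$, then identify $M_1$ with the unique quadratic subextension $K(\sqrt{\disc(K)})$ of $N_K/K$. Your reduction $M_1\subseteq N_K$ via $H_2\subseteq\langle T\rangle\subseteq\langle H_1,T\rangle=\Gal(N_{H_K}/M_1)$ (Proposition~\ref{proposition, Ch-H result for Po(K)nr1}(2) together with Corollary~\ref{corollary, Ker(Phi) is equal to T mod H1 for S4-fields}) is a bit more explicit than the paper's, which argues through the map $\Phi$ and Lemma~\ref{lemma, Po(K)nr1 is equal to Ker(Phi) for S4-fields} directly.
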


\begin{proof}
	It has been proved in \cite[Proposition 6.1]{Chabert II} that $\# \frac{H_0}{\langle H_0^{\prime},T_0 \rangle} =2$. Hence by Lemma \ref{lemma, degree M1}, $[M_1:K]$ divides $2$. Let $[M_1:K]=2$. 
	Since $G_0=\Gal(N_K/\mathbb{Q})\simeq S_4$, we have
	\begin{itemize}
		\item $\sqrt{\disc(K)} \in N_K \backslash K$;
		
		\item $H_0 =\Gal(N_K/K) \simeq S_3$;
		
		\item $T_0 =\{( 1\, 2\, 3), (1 \, 3 \, 2)\} \subseteq H_0^{\prime} \simeq A_3$,
	\end{itemize}
	see \cite[Proof of Proposition 6.1]{Chabert II}. Let $F=K(\sqrt{\disc(K)})$.  Then $F/K$ is the unique quadratic subextension of $N_K/K$, which is the fixed subfield of $N_K$ by $H_0^{\prime}$.  
	Since, by Lemma \ref{lemma, Po(K)nr1 is equal to Ker(Phi) for S4-fields}, $\Ker(\Phi)=\Po(K)_{nr1}$,  we have
	\begin{equation*} \label{equation, Phi(Ponr1)}
		\Phi(\Po(K)_{nr1})
		\subseteq  \langle H_0^{\prime},T_0 \rangle =H_0^{\prime}.
	\end{equation*}
	Hence $M_1$ (corresponding to $\Po(K)_{nr1}$) would be isomorphic to a subfield of $F$. 
	But $[M_1:K]=[F:K]=2$, which implies that
	\begin{equation*}
		M_1 \simeq F=K(\sqrt{\disc(K)}).
	\end{equation*}
\end{proof}

\begin{remark}
Lemma \ref{lemma. M1 is either K or Kdisc for S4-fields} is a generalization of Chabert and Halberstadt's result \cite[Proposition 6.1]{Chabert II} in which they proved that $\Po(K)_{nr}=0$ if and only if either we have $H_K=K$ or $H_K=K(\sqrt{\disc(K)})$. Because, as mentioned in Remark \ref{remark, M1 over K is unramified}, $\Po(K)_{nr}=0$ if and only if $M_1=H_K$.
\end{remark}

\begin{lemma} \label{lemma, NK/K is unramified for S4 fields K}
	Let $K$ be a $S_4$ field, and $N_K$ be its Galois closure over $\mathbb{Q}$. Let $M_1/K$  be the subextension of $H_K/K$ corresponding to the subgroup $\Po(K)_{nr1}$. If $M_1=K(\sqrt{\disc(K)})$, then $N_K/K$ is unramified at all  finite primes.
\end{lemma}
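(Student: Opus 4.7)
The plan is to proceed by contradiction via a careful analysis of inertia subgroups at ramified primes. The hypothesis $M_1 = K(\sqrt{\disc(K)}) \subseteq H_K$ immediately forces $F := K(\sqrt{\disc(K)})$ to be unramified over $K$. Since $N_K/K$ is Galois with group $H_0 \simeq S_3$, and $F$ is the unique quadratic subextension of $N_K/K$ (the fixed field of $A_3 = H_0'$), the tower formula for relative discriminants, combined with $F/K$ unramified, reduces the claim to showing that $N_K/F$ is unramified at every finite prime.

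Next I translate unramifiedness into a condition on inertia groups. For each rational prime $p$, fix $I_p \leq G_0 \simeq S_4$ the inertia of a prime of $N_K$ above $p$. Under the action of $G_0$ on $G_0/H_0 \cong \{1,2,3,4\}$, $F/K$ being unramified at all finite primes is equivalent to the statement that $I_p$ contains no transposition for every $p$ (since the odd permutations of $S_4$ having a fixed point are precisely the transpositions), and $N_K/F$ being unramified at all finite primes is equivalent to the statement that $I_p$ contains no $3$-cycle for every $p$. Assuming for contradiction that $N_K/F$ is ramified at some prime, some $I_p$ contains a $3$-cycle; together with the no-transposition condition, this forces $I_p$ to be conjugate in $S_4$ to $A_3$ or $A_4$.

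Finally I rule out both inertia types. The case $I_p = A_4$ can arise only at $p = 2$ with wild ramification (its $2$-Sylow being the normal subgroup $V_4$), in which case $p\mathcal{O}_K = \mathfrak{p}^4$ at the unique prime $\mathfrak{p}$ of $K$ above $2$; working through the ramification filtration, one pins down $v_2(\disc(K))$ and verifies that the local extension $K_\mathfrak{p}(\sqrt{\disc(K)})/K_\mathfrak{p}$ must itself be ramified, contradicting $F/K$ unramified. The case $I_p = A_3$ is the main obstacle, since locally it is perfectly compatible with $F/K$ being unramified; here I would exploit the finer content of $M_1 = F$, namely the equality $\Po(K)_{nr1} = \Gal(H_K/F)$, by applying Theorem~\ref{theorem, the composite maps}(i) together with Corollary~\ref{corollary, Artin symbol for M1/K} to compute $\theta \circ \alpha_K([\Pi_p(K)])$ in the two subcases $D_p = A_3$ and $D_p = S_3$ (the only possibilities, since $N_{S_4}(A_3) = S_3$), and then using the class-group relation $[\mathfrak{p}_1][\mathfrak{p}_2]^3 = 1$ arising from $p\mathcal{O}_K = \mathfrak{p}_1 \mathfrak{p}_2^3$ to contradict the prescribed index-$2$ location of $\Po(K)_{nr1}$ inside $\Cl(K)$.
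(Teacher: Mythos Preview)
Your reduction to the two inertia types $I_p \cong A_3$ and $I_p \cong A_4$ is correct, but both case arguments fail.

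For $I_p \cong A_4$ (necessarily $p=2$), you claim that a ramification--filtration computation forces $K_\mathfrak{p}(\sqrt{\disc(K)})/K_\mathfrak{p}$ to be ramified, contradicting $F/K$ unramified. This is false: since $A_4 \trianglelefteq S_4$, every conjugate of $I_p$ meets $H_0 \cong S_3$ in $A_4 \cap S_3 = A_3$, which already lies in $\Gal(N_K/F)=A_3$; hence $F/K$ is \emph{unramified} above $2$ whenever $I_p=A_4$. This is exactly case~18 in Table~\ref{tab, Zantema's table}, where the column ``Is $p$ unramified in $K(\sqrt{\disc(K)})/K$?'' reads ``yes''. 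Your proposed contradiction therefore cannot arise, and no purely local argument at this prime will exclude the case.

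For $I_p \cong A_3$, the decomposition $p\mathcal{O}_K=\mathfrak{p}_1\mathfrak{p}_2^{\,3}$ gives $[\Pi_p(K)]=[\mathfrak{p}_1\mathfrak{p}_2]=[\mathfrak{p}_2]^{-2}$, a square in $\Cl(K)$; its image in $\Cl(K)/\Po(K)_{nr1}\simeq \mathbb{Z}/2\mathbb{Z}$ is therefore trivial (consistent with cases~7--8 of Table~\ref{tab, Zantema's table}). So the Artin-symbol computation you outline produces no tension with the index-$2$ hypothesis for this prime alone, and the ``contradiction'' you allude to does not materialise. Any argument here would have to be genuinely global.

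The paper bypasses the case analysis entirely. It passes to a non-Galois cubic subextension $L/K$ of $N_K/K$ and invokes the conductor--discriminant relation $\disc(L/K)=\disc(M_1/K)\cdot f^2$ of \cite[Proposition~10.1.28]{H. Cohen Adv.}: since $M_1/K$ is unramified, $\disc(M_1/K)$ is a unit, and since $L/K$ is non-Galois the left side cannot be a perfect square, forcing $f$ to be a unit; hence $L/K$, and with it its Galois closure $N_K/K$, is unramified. That single identity replaces your entire inertia-group dichotomy.
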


\begin{proof}
	From the proof of Lemma \ref{lemma. M1 is either K or Kdisc for S4-fields}, we have
	\begin{equation*}
		\Gal(N_K)/K \simeq S_3,
	\end{equation*}
	and $M_1/K=K(\sqrt{\disc(K)})/K$ is the unique quadratic subextension of $N_K/K$. Let $L/K$ be a cubic subextension of $N_K/K$. Then, $L/K$ is non-Galois, so $\disc(L/K)$ is not square of any ideal of $K$. More precisely, the discriminant of $L/K$ can be written as
	\begin{equation} \label{equation, disc(L/K) for a cubic non-Galois L/K}
		\disc(L/K)=\disc(M_1/K).f^2,
	\end{equation}
	where $f$ is the conductor of $N_K$ over $M_1$, 
	see \cite[Proposition 10.1.28]{H. Cohen Adv.}. Since $M_1/K$ is unramified, see Remark \ref{remark, M1 over K is unramified}, $\disc(M_1/K)$ is a unit in $K$. Because $\disc(L/K)$ can not be a perfect square, by \eqref{equation, disc(L/K) for a cubic non-Galois L/K}, the only possible case is that $f$ be also unit. Hence, $L/K$ is unramified at all finite primes of $K$. Since $N_K$ is the Galois closure of $L$ over $K$, $N_K/K$ is also unramified at all finite primes, as claimed.
\end{proof}

\begin{lemma} \cite[Lemma 5.4]{Zantema2} \label{lemma, Zantema's result for existence of E abelian}
	Let F be a number field and $N_F$ be its Galois closure over $\mathbb{Q}$. Let $P$ be a set of prime numbers $p$ for which the decomposition group in $\Gal(N_F/\mathbb{Q})$ (defined up to conjugacy) is abelian, and for which $N_F/F$ is unramified at all primes above $p$. Then there exists an abelian number field $E$ of degree
	\begin{equation*}
		[E:\mathbb{Q}]=\prod_{p \in P} e_{p(N_F/\mathbb{Q})},
	\end{equation*}
	where $ e_{p(N_F/\mathbb{Q})}$ denotes the ramification index of $p$ in $N_F/\mathbb{Q}$. Moreover, $(K.E)/K$ is unramified at all finite primes.
\end{lemma}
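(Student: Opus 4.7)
The plan is to build $E$ as a compositum $E=\prod_{p\in P}E_p$ of local pieces, one for each prime in $P$, using local class field theory together with a Grunwald--Wang style globalization to realize each $E_p$ with prescribed local behaviour at $p$ and as mild ramification as possible everywhere else.

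First, I would fix $p\in P$ and study the local picture inside $N_F/\mathbb{Q}$. Let $\mathfrak{P}$ be a prime of $N_F$ above $p$ and set $D=D(\mathfrak{P}/p)$, $I=I(\mathfrak{P}/p)\subseteq G:=\Gal(N_F/\mathbb{Q})$. By hypothesis $D$ is abelian, and the hypothesis that $N_F/F$ is unramified above $p$ means $I\cap\Gal(N_F/F)=\{1\}$. Letting $\mathfrak{p}=\mathfrak{P}\cap F$, the multiplicativity $e(\mathfrak{P}/p)=e(\mathfrak{P}/\mathfrak{p})\,e(\mathfrak{p}/p)$ forces $e(\mathfrak{p}/p)=|I|=e_p(N_F/\mathbb{Q})$, so the ramification of $F/\mathbb{Q}$ at $\mathfrak{p}$ is already captured inside the abelian local extension $(N_F)_\mathfrak{P}/\mathbb{Q}_p$.

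Second, I would extract the correct local target field. Because $D$ is abelian, local class field theory applies to $(N_F)_\mathfrak{P}/\mathbb{Q}_p$: its inertia subfield has an abelian totally ramified complement $L_p\subseteq (N_F)_\mathfrak{P}$ with $[L_p:\mathbb{Q}_p]=|I|=e_p(N_F/\mathbb{Q})$. By Grunwald--Wang, one can find a global abelian extension $E_p/\mathbb{Q}$ of degree $e_p(N_F/\mathbb{Q})$ whose completion at (the unique prime of $E_p$ above) $p$ coincides with $L_p$, and which is unramified at every rational prime different from $p$. Setting $E:=\prod_{p\in P}E_p$, distinct ramification loci force $[E:\mathbb{Q}]=\prod_{p\in P}e_p(N_F/\mathbb{Q})$, and $E/\mathbb{Q}$ is abelian.

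Third, I would check that $(F\cdot E)/F$ is unramified at every finite prime $\mathfrak{q}$ of $F$ lying over some rational $q$. If $q\notin P$ then $E$ is unramified at $q$ by construction, so $F\cdot E$ is unramified over $F$ above $\mathfrak{q}$. If $q\in P$, then locally $F_\mathfrak{q}\cdot (E_q)_{\wp}$ sits inside $(N_F)_{\mathfrak{Q}}$ for a suitable prime $\mathfrak{Q}$ above $\mathfrak{q}$, because $L_q\subseteq (N_F)_\mathfrak{Q}$ and the local degrees match; since $(N_F)_{\mathfrak{Q}}/F_\mathfrak{q}$ is unramified by the hypothesis of the lemma, so is the intermediate compositum $F_\mathfrak{q}\cdot(E_q)_\wp$. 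Concatenating these local checks gives $(F\cdot E)/F$ everywhere unramified.

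The step I expect to be the main obstacle is the Grunwald--Wang globalization, especially in the wildly ramified case at $p=2$, where the classical Grunwald--Wang exception may prevent a direct realization of $L_p$ as a completion of a global abelian $E_p$ unramified away from $p$. The standard workaround is to allow $E_p$ to be mildly ramified at a carefully chosen auxiliary tame prime (chosen so that it is either already ramified in $F$ or split into enough pieces in $F$ that no new ramification of $(F\cdot E)/F$ is produced); a more delicate but workable variant is to realize $L_p$ inside a cyclotomic field $\mathbb{Q}(\zeta_{p^m})$ directly, so that abelianness and unramifiedness-outside-$p$ hold by construction, and only the matching of $L_p$ requires verification.
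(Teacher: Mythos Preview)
The paper does not give its own proof of this lemma; it is quoted verbatim from Zantema's paper \cite{Zantema2}, so there is no ``paper's proof'' to compare against. That said, your overall architecture --- build $E$ as a compositum of pieces $E_p$, each ramified only at $p$ with local completion absorbing the inertia of $N_F/\mathbb{Q}$ at $p$ --- is exactly right, and your unramifiedness check for $(F\cdot E)/F$ is the correct one.

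Where your write-up goes astray is in reaching for Grunwald--Wang. That theorem does not control ramification outside the prescribed set $S$, so it will not hand you an $E_p$ unramified away from $p$; and as you yourself note, the $p=2$ special case introduces further headaches. The clean route is precisely the one you relegate to a parenthetical ``workaround'': since $D$ is abelian, the local extension $(N_F)_{\mathfrak{P}}/\mathbb{Q}_p$ is abelian, and by local Kronecker--Weber its inertia (as a quotient of $\mathbb{Z}_p^\times$) is realized by a subfield of $\mathbb{Q}_p(\zeta_{p^m})$. The natural isomorphism $\Gal(\mathbb{Q}(\zeta_{p^m})/\mathbb{Q})\cong\Gal(\mathbb{Q}_p(\zeta_{p^m})/\mathbb{Q}_p)$ then gives a global $E_p\subseteq\mathbb{Q}(\zeta_{p^m})$ with the right completion, automatically abelian and unramified outside $p$. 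This is almost certainly what Zantema does, and it makes your Grunwald--Wang discussion (and its attendant obstacle) evaporate. One further small point: your assertion that $L_p\subseteq (N_F)_{\mathfrak{P}}$ is not obviously true in general (the totally ramified piece with Galois group $I$ need not sit inside the given abelian extension), but the unramifiedness of $F_{\mathfrak{p}}\cdot L_p/F_{\mathfrak{p}}$ follows anyway from comparing norm groups, since $\mathbb{Z}_p^\times\cap N_{F_{\mathfrak{p}}/\mathbb{Q}_p}(F_{\mathfrak{p}}^\times)=\mathbb{Z}_p^\times\cap N_{(N_F)_{\mathfrak{P}}/\mathbb{Q}_p}((N_F)_{\mathfrak{P}}^\times)$.
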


We are ready to prove Conjecture \ref{conj, S4-fields}.

\begin{theorem} \label{theorem, S4-field}
	Let $K$ be a $S_4$-field. Then $\Po(K)=\Cl(K)$.
\end{theorem}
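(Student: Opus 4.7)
I would attack this by contradiction, splitting on the two possibilities for $M_1$ given by Lemma~\ref{lemma. M1 is either K or Kdisc for S4-fields}. Suppose $\Po(K) \neq \Cl(K)$. If $M_1 = K$, then by the Galois correspondence $\Po(K)_{nr1} = \Cl(K)$, and since $\Po(K)_{nr1} \subseteq \Po(K) \subseteq \Cl(K)$, we would have $\Po(K) = \Cl(K)$, contradicting the assumption. Hence $M_1 = K(\sqrt{\disc(K)}) \neq K$, so $[\Cl(K) : \Po(K)_{nr1}] = 2$. Combined with $\Po(K)_{nr1} \subseteq \Po(K) \subsetneq \Cl(K)$, this forces $\Po(K) = \Po(K)_{nr1}$. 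By Corollary~\ref{corollary, Artin symbol for M1/K}, it follows that $\left(\frac{M_1/K}{\Pi_{p^f}(K)}\right) = 1$ for \emph{every} ramified prime $p$ and every $f \in \mathbb{N}$, and the whole proof reduces to exhibiting one ramified Ostrowski ideal whose Artin symbol in $M_1/K$ is non-trivial.

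Next I would exploit Lemma~\ref{lemma, NK/K is unramified for S4 fields K}, which says $N_K/K$ is unramified at all finite primes. Translating this to $G_0 = S_4$: for any ramified prime $p$, the inertia group $T(\mathfrak{P}/p)$ cannot meet any conjugate of $H_0 = S_3$ non-trivially, hence must consist only of fixed-point-free permutations. Consequently $T(\mathfrak{P}/p)$ is a subgroup of either the Klein four group $V$ or of a cyclic group of order $4$ generated by a $4$-cycle. I would then apply Lemma~\ref{lemma, Zantema's result for existence of E abelian} to $F = K$, taking $P$ to be the set of primes ramifying in $N_K/\mathbb{Q}$ with abelian decomposition group. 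This produces an abelian field $E/\mathbb{Q}$ of degree $\prod_{p\in P} e_p(N_K/\mathbb{Q})$ with $(K\cdot E)/K$ unramified, hence $K\cdot E \subseteq H_K$. Since $K$ is $S_4$ and its maximal abelian subfield over $\mathbb{Q}$ is $\mathbb{Q}$, we have $E \cap K = \mathbb{Q}$ and $[K\cdot E : K] = [E:\mathbb{Q}]$.

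The crux of the argument is to extract a non-trivial Artin symbol from this data. Using Theorem~\ref{theorem, the composite maps}(i) together with the identification $H_0/\langle T_0,H_0'\rangle \cong S_3/A_3 \cong \{\pm 1\}$, the Artin symbol of $\Pi_{p^f}(K)$ in $M_1/K$ is the sign (in $S_4/A_4$) of the element $\prod_{\{i:f_i=f\}} \hat{s}_i g_{\mathfrak{P}_0}^{f_i}\hat{a}_i\hat{s}_i^{-1}$, which factors as $\mathrm{sgn}(g_{\mathfrak{P}_0})^{f\cdot k_f}\prod_i \mathrm{sgn}(\hat{a}_i)$. Here $\mathrm{sgn}(\hat{a}_i) = -1$ only when $\hat{a}_i$ is a $4$-cycle. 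A case analysis over the allowable pairs $(T,Z)$ compatible with $N_K/K$ unramified shows that the decomposition types giving a non-trivial Artin symbol are exactly those with $p\mathcal{O}_K = \mathfrak{p}^4$ (totally ramified, residue degree one) and decomposition group $D_4$; in every other configuration, the required choice of $a_i$ forces the restricted product into $A_3$.

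The main obstacle is then to \emph{guarantee} the existence of such a totally-ramified prime with $D_4$ decomposition. The plan is to combine the previous case analysis with the abelian field $E$ from Lemma~\ref{lemma, Zantema's result for existence of E abelian}: primes with abelian decomposition group land in $P$ and contribute to the degree of $E$, while the constraint $K\cdot E \subseteq H_K$ together with $[\Cl(K):\Po(K)_{nr1}] = 2$ limits how large $E$ may be. A global parity argument on $\disc(K)$, using that $M_1 = K(\sqrt{\disc K})/K$ is unramified (so $\disc(K)$ is a local square at every prime of $K$) together with the fact that $\disc(K)$ is not a square in $\mathbb{Q}$ (because $G_0 = S_4 \not\subseteq A_4$), then forces at least one ramified prime to have decomposition group $D_4$. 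That prime provides the Ostrowski ideal $\Pi_p(K) = \mathfrak{p}$ whose Artin symbol in $M_1/K$ is non-trivial, contradicting $\Po(K) = \Po(K)_{nr1}$ and completing the proof.
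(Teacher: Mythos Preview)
Your overall strategy coincides with the paper's (and Zantema's): reduce to $M_1=K(\sqrt{\disc K})$, use Lemma~\ref{lemma, NK/K is unramified for S4 fields K} to restrict the possible inertia groups to fixed-point-free subgroups of $S_4$, identify the ``bad'' cases (those with $Z(\mathfrak{P}/p)=D_4$) as precisely the ones producing a non-trivial Artin symbol in $M_1/K$, and then argue that such a prime must exist. The first three steps are carried out correctly and mirror the paper's use of Zantema's table (Table~\ref{tab, Zantema's table}).

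The gap is in the last step. Two points:

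\emph{(a) The parity argument is based on a wrong premise.} You assert that $M_1=K(\sqrt{\disc K})\subseteq H_K$ implies $\disc(K)$ is a local square at every finite prime of $K$, and then invoke a Hasse principle. But ``$K(\sqrt{d})/K$ unramified at $\mathfrak p$'' only says the local extension $K_{\mathfrak p}(\sqrt d)/K_{\mathfrak p}$ is unramified; it may well be the inert quadratic extension, in which case $d$ is \emph{not} a square in $K_{\mathfrak p}$. So no local--global contradiction is available, and this line does not force the existence of a prime with decomposition group $D_4$.

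\emph{(b) The bound on $E$ is not the one you state.} From Lemma~\ref{lemma, Zantema's result for existence of E abelian} you get $(K\cdot E)/K$ unramified at finite primes; even after handling archimedean places (which you omit), this gives only $K\cdot E\subseteq H_K$, hence $[E:\mathbb Q]\mid h_K$. The index $[\Cl(K):\Po(K)_{nr1}]=2$ is $[M_1:K]$, not $h_K$, so it does not by itself bound $[E:\mathbb Q]$. The paper closes this by showing that in fact (the real part of) $K\cdot E$ lies inside $M_2=M_1$, using that $\Phi$ vanishes on all of $\Po(K)$; this is what yields $[E:\mathbb Q]\le 4$ and then pins down a single prime $p_0$ in case~10 of Table~\ref{tab, Zantema's table}. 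Even after that, the paper still has to eliminate the residual configurations (cases 10 and 11 in your setup) by appealing to Zantema's finer analysis showing $K$ is totally real while $E$ is complex; a one-line parity argument does not replace this.

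In short, your reduction to ``exhibit a ramified prime with $Z=D_4$'' is exactly right, but the mechanism you propose for producing that prime does not work as written; you need the sharper containment $K\cdot E\subseteq M_2$ and the subsequent case elimination, as in the paper.
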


\begin{proof}
	The proof is a generalization of Zantema's method in \cite[Proof of Theorem 5.5]{Zantema2}. Using the  Galois correspondence and the Artin isomorphism 
	\begin{equation*}
		\alpha_K:\Cl(K) \rightarrow \Gal(H_K/K)=\frac{H}{H_1},
	\end{equation*}
	let $M_1/K$ and $M_2/K$ be the subextensions of $H_K/K$ corresponding to the subgroups $\Po(K)_{nr1}$ and $\Po(K)$, respectively. We claim that $M_2=K$ (hence $\Po(K)=\Cl(K)$). If $M_1=K$ then $M_2=K$, which implies that
	\begin{equation*}
		\Po(K)_{nr1}=\Po(K)=\Cl(K).
	\end{equation*}
	So, using Lemma \ref{lemma. M1 is either K or Kdisc for S4-fields},  we only need to consider the case $M_1=K(\sqrt{\disc(K)})$. 
	To reach a contradiction, assume that $M_2=M_1$. Hence $\Po(K)=\Po(K)_{nr1}$. By Lemma \ref{lemma, Po(K)nr1 is equal to Ker(Phi) for S4-fields}, the map 
	\begin{equation*}
		\Phi : \Po(K) \rightarrow \frac{H_0}{\langle H_0^{\prime},T_0\rangle},
	\end{equation*}
	as defined in Theorem \ref{theorem, the composite maps}, is zero.
	Let $p$ be a prime number, and $\mathfrak{P}$ be a prime of $N_K$ above $p$. 
	In \cite[Section 3]{Zantema2}, Zantema determined all the possible cases, namely 18 cases, for the decomposition group $Z(\mathfrak{P}/p)$ and the inertia group $T(\mathfrak{P}/p)$ (as subgroups of $G_0=\Gal(N_K/\mathbb{Q})\simeq S_4$ up to conjugacy), see Table \ref{tab, Zantema's table} below.

	\begin{table}[!h]
		\begin{center}
			\begin{tabular}{|c | c|c|c | p{2.4cm} |p{2.7cm}| p{1.9cm}|} 
				\hline
				Case & $T(\mathfrak{P}/p)$ & $Z(\mathfrak{P}/p)$ & $\prod_{i=1}^n f_i^{e_i}$ & Condition on $p$ & Is $p$ unramified in $K(\sqrt{\disc(K)}/K)$? & Is $\left(\frac{M_1/K}{\Pi_{p^f}(K)}\right)$ trivial for all $f \in \mathbb{N}$? \\ [0.5ex] 
				\hline	
				1 & $\langle  (1 3)(2 4)\rangle$ & $\langle (1234) \rangle$ &	$2^2$ & -- & yes & yes \\
				\hline	
				2 & $\langle  (1 3)(2 4)\rangle$ & $\langle (1 3),(2 4) \rangle$ &	$1^2 1^2$ & -- & yes & yes \\	
					\hline	
				3 & $\langle  (1 3)(2 4)\rangle$ & $V_4$ &	$2^2$ & -- & yes & yes \\
				\hline	
				4 & $\langle  (1 3)(2 4)\rangle$ & $\langle  (1 3)(2 4)\rangle$ &	$1^2 1^2$ & -- & yes & yes \\
				\hline	
					5 & $\langle  (1 3)\rangle$ & $\langle  (1 3),(2 4)\rangle$ &	$1^2 2$ & -- & no & -- \\
				\hline	
					6 & $\langle  (1 3)\rangle$ & $\langle  (1 3) \rangle$ &	$1^2 11$ & -- & no & -- \\
				\hline
					7 & $\langle  (1 2 3)\rangle$ & $S_3$ &	$1^3  1$ & $p \equiv 0,2 \, (\mathrm{mod}\, 3)$ & yes & yes \\
				\hline
					8 & $\langle  (1 2 3)\rangle$ & $\langle  (1 2 3)\rangle$ &	$1^3  1$ & $p \equiv 0,1 \, (\mathrm{mod}\, 3)$ & yes & yes \\
				\hline
					9 & $\langle  (1 2 3 4)\rangle$ & $D_4$ &	$1^4$ & $p \equiv 2,3 \, (\mathrm{mod}\, 4)$ & yes & no \\
				\hline
					10 & $\langle  (1 2 3 4)\rangle$ & $\langle  (1 2 3 4)\rangle$ &	$1^4$ & $p \equiv 1,2 \, (\mathrm{mod}\, 4)$ & yes & yes \\
				\hline
					11 & $V_4$ & $A_4$ &	$1^4$ & $p=2$ & yes & yes \\
				\hline
					12 & $V_4$ & $D_4$ &	$1^4$ & $p=2$ & yes & no \\
				\hline
					13 & $V_4$ & $V_4$ &	$1^4$ & $p=2$ & yes & yes \\
				\hline
					14 & $\langle  (1 2), (3 4)\rangle$ & $D_4$ &	$2^2$ & $p=2$ & no & -- \\
				\hline
					15 & $\langle  (1 2), (3 4)\rangle$ & $\langle  (1 2), (3 4)\rangle$ &	$1^2 1^2$ & $p=2$ & no & -- \\
				\hline
					16 & $S_3$ & $S_3$ &	$1^3 1$ & $p=3$ & no & -- \\
				\hline
					17 & $D_4$ & $D_4$ &	$1^4$ & $p=2$ & no & -- \\
				\hline
					18 & $A_4$ & $S_4$ &	$1^4$ & $p=2$ & yes & no \\
				\hline
			\end{tabular} 	\caption{A detailed analysis of ramification in $S_4$-fields $K$, presented by Zantema  \cite[the table on page 100]{Zantema2}. The notation $V_4$ denotes the Klein four-group. Also, the product $\prod_{i=1}^n f_i^{e_i}$ is corresponded to the decomposition form $p \mathcal{O}_K=\prod_{i=1}^n  \mathfrak{P}_i^{e(\mathfrak{P}_i/p)}$, where $f_i=f(\mathfrak{P}_i/p)$ and $e_i=e(\mathfrak{P}_i/p)$.} \label{tab, Zantema's table} 
		\end{center}
	\end{table}
Corresponding to each case, Zantema also determined if $\mathfrak{P}_0:=\mathfrak{P} \cap K$ is ramified in $M_1/K$, and checked the triviality of the Artin symbols $\left(\frac{M_1/K}{\Pi_{p^f}(K)}\right)$, for all $f \in \mathbb{N}$. Since $M_1/K$ is unramified, see Remark \ref{remark, M1 over K is unramified}, cases $5$, $6$, $14$, $15$, $16$, and $17$, among all the possible cases presented in Table \ref{tab, Zantema's table}, cannot occur. In particular, $K$ is either totally real or totally complex \cite[Proof of Theorem 5.5]{Zantema2}.  
	In cases $9,12,18$ the Artin symbol $\left(\frac{M_1/K}{\Pi_{p^f}(K)}\right)$ is non-trivial. By Corollary \ref{corollary, Artin symbol for M1/K}, for these cases, we have
	\begin{equation*}
		\Phi([\Pi_{p}(K)])=\lambda \left(\theta \circ \alpha_K \left(\left[ \Pi_{p^f}(K)\right]\right)\right) \neq 0 \, \, \text{in}\, \, \frac{H_0}{\langle H_0^{\prime},T_0 \rangle},
	\end{equation*}
	which is a contradiction since $\Phi$ is the zero map (Note that the map $\lambda$ is injective, see Theorem \ref{theorem, the composite maps}). 
	Let $F=\mathbb{Q}(\sqrt{\disc(K)})$ and $p_0$ be a prime dividing $\disc(F)$ (Note that $\Gal(N_K/F) \simeq A_4$). Then  $A_4$ does not contain $T(\mathfrak{P}/p_0)$ (This happens only for the case $10$ in Table \ref{tab, Zantema's table}). In particular, $e_{p_{0}(K/\mathbb{Q})}=4$. Since $M_1=K(\sqrt{\disc(K)})$, Lemma \ref{lemma, NK/K is unramified for S4 fields K} shows that $N_K/K$ is unramified at all finite primes. Hence, among the remaining cases, for those with abelian decomposition groups $Z(\mathfrak{P}/p)$, namely cases $1$, $2$, $3$, $4$, $10$, and $13$, we can apply Lemma \ref{lemma, Zantema's result for existence of E abelian} to get an abelian number field $E$ such that 
	\begin{equation*}
		[E:\mathbb{Q}]=\prod_{p \in P} e_{p(N_K/\mathbb{Q})},
	\end{equation*}
	where $P$ denotes the set of all primes $p$ whose decomposition group in $N_K/\mathbb{Q}$ is abelian. Following Zantema, we define the number field $M$ as
	\begin{equation} \label{equation, define M as E.K or ER.K}
		M= \left\{
		\begin{array}{ll}
			E\cdot K, &  \text{if $K$ is totally complex,}\\
			(E \cap \mathbb{R})\cdot K,  &  \text{if $K$ is totally real.} \\
		\end{array}
		\right.
	\end{equation}
	By Lemma \ref{lemma, Zantema's result for existence of E abelian}, $(M\cdot M_2)/K$ is an abelian extension which is unramified at all the finite places of $K$ (Recall that $M_2/K$ is the subextension of $H_K/K$ corresponded to $\Po(K)$). Hence
	\begin{equation*}
		M_2 \subseteq (M\cdot M_2) \subseteq H_K,
	\end{equation*}
	which implies that 
	\begin{equation*}
		\Gal(H_K/(M\cdot M_2)) \subseteq \Gal(H_K/M_2) \simeq \Po(K).
	\end{equation*}
	Let $\mathcal{A}$ be the subgroup of $\Po(K)$ corresponded to $\Gal(H_K/(M\cdot M_2))$. Since 
	\begin{equation*}
		\Phi : \Po(K) \rightarrow \frac{H_0}{\langle H_0^{\prime},T_0\rangle},
	\end{equation*}
	as defined in Theorem \ref{theorem, the composite maps}, is the zero map, we get
	\begin{equation} \label{equation, Phi of A is containded in H0}
		\Phi\left(\mathcal{A}\right) \subseteq \langle H_0^{\prime},T_0 \rangle =H_0^{\prime}.
	\end{equation}
	On one hand, by our assumption, $M_2=M_1=K(\sqrt{\disc(K)})$. On the other hand, since $K(\sqrt{\disc(K)})/K$ is the unique quadratic subextension of $N_K/K$ (that is the fixed subfield of $N_K$ by $H_0^{\prime}$), the relation \eqref{equation, Phi of A is containded in H0} yields $(M\cdot M_2)$, corresponding to the subgroup $\mathcal{A}$, would be  a subfield of $K(\sqrt{\disc(K)})=M_2$. In other words,
	\begin{equation*}
		M \subseteq M_2=K(\sqrt{\disc(K)}).
	\end{equation*}
	Hence,
	\begin{align*}
		2=[M_2:K] \geq [M:K] \geq \frac{1}{2} [(E\cdot K):K]=\frac{1}{2}[E:\mathbb{Q}]=\frac{1}{2} \prod_{p \in P} e_{p(N_K/\mathbb{Q})} \\
		\geq \frac{1}{2} (e_{p_{0}(K/\mathbb{Q})}) \prod_{p_0 \neq p \in P} e_{p(N_K/\mathbb{Q})} = 2 \prod_{p_0 \neq p \in P} e_{p(N_K/\mathbb{Q})}.
	\end{align*}
	Therefore,
	\begin{equation*}
		\prod_{p_0 \neq p \in P} e_{p(N_K/\mathbb{Q})}=1,
	\end{equation*}
	which implies that cases $1$, $2$, $3$, $4$, and $13$ can not occur. Moreover, 
	\begin{equation*}
		[M:K]=\frac{1}{2} [(E \cdot K):K].
	\end{equation*}
	From \eqref{equation, define M as E.K or ER.K}, we conclude that $M=(E \cap \mathbb{R})\cdot K$. Thus, $E$ is complex, while $K$ is a totally real number field. Now, as it is shown by Zantema \cite[Pages 104--105]{Zantema2}, we see that none of the remaining cases, namely cases $7$, $8$, $10$, and $11$ in Table \ref{tab, Zantema's table}, can happen. Consequently, in the case $M_1=K(\sqrt{\disc(K)})$ we must have $M_2=K$, i.e., $\Po(K)=\Cl(K)$.
\end{proof}

\vspace*{0.2cm}
\section{Application of the Main Theorem for $A_5$-fields} \label{Section, non-cyclic fields}
Let $K/\mathbb{Q}$ be a non-Galois extension of a prime degree, for which the Galois group of its Galois closure is an \textit{unsolvable} transitive permutation group. As an application of Theorem \ref{theorem, the composite maps}, we show that for such fields $K$, including $A_5$-fields, the subgroup $\Po(K)_{nr1}$ coincides with $\Po(K)$, 
see Theorem \ref{theorem, our results for non-cyclic fields of prime degrees} below. We begin by summarizing some results of Zantema (As mentioned in Section \ref{section, background}, the notions of $\PSL(d,q)$ and $\mathrm{P}\Sigma \mathrm{L}(d,q)$ denote the group of $d \times d$ matrices over $\mathbb{F}_q$ of determinant $1$ modulo scalars, and the permutation group on the points of $\mathbb{P}^{d-1}(\mathbb{F}_q)$ generated by $\text{Aut}(\mathbb{F}_q)$ and $\PSL(d,q)$, respectively).

\begin{proposition} \cite[Sections 7]{Zantema} \label{proposition, possible groups G for non-cyclic}
Let $K/\mathbb{Q}$ be a non-Galois extension of prime degree $\ell \geq 5$. Denote by $N_K$ the Galois closure of $K$ (over $\mathbb{Q}$),  and assume that $G_0=\Gal(N_K/\mathbb{Q})$ is unsolvable.
Then, all the possible cases for $G_0$ can be listed as follows:
\begin{itemize}
	\item[(i)] $G_0=A_{\ell}$, or $G_0=S_{\ell}$;
	
	\item[(ii)] $G_0=\PSL(2,11)$ acting on $11$ symbols;

	\item[(iii)] $G_0=M_{11}$, or $G_0=M_{23}$, where $M_{11}$ and $M_{23}$ denote the Mathieu groups acting on $11$ and $23$ symbols, respectively.
	
	\item[(iv)]  $\PSL(d,q) \subseteq G_0 \subseteq \mathrm{P}\Sigma \mathrm{L}(d,q)$, with $d>1$ an integer, and $q$ a prime power, acting on $\ell=\frac{q^d-1}{q-1}$ points of the projective space $\mathbb{P}^{d-1}(\mathbb{F}_q)$.
\end{itemize}
Also, under the action of $G_0$ on the right cosets of $H_0:=\Gal(N_K/K)$ in $G_0$, defined as  
\begin{equation*}
	(H_0s)^{g}:=H_0sg^{-1}, \quad \forall g \in G_0,
\end{equation*}
the set $T_0$, formed by the elements of $H_0$ which have only $H_0$ as fixed point, is non empty.
\end{proposition}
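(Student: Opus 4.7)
The plan is to establish the classification and the non-emptiness of $T_0$ separately. First I would observe that since $\ell = [K:\mathbb{Q}] = [G_0 : H_0]$ is prime, $G_0$ acts faithfully and transitively on the $\ell$ right cosets of $H_0$, so $G_0$ embeds as a transitive subgroup of $S_\ell$. A classical theorem of Burnside on transitive groups of prime degree states that such a group is either solvable, in which case it is a subgroup of the affine group $\mathrm{AGL}(1,\ell)$ (and hence a Frobenius group), or else it is doubly transitive. Since we assume $G_0$ is unsolvable, the first alternative is excluded, so $G_0$ is a $2$-transitive permutation group of prime degree $\ell$.

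The next step is to invoke the classification of $2$-transitive permutation groups of prime degree. Via the classification of finite simple groups, the socle of any finite $2$-transitive group is known, and restricting to prime degree $\ell \geq 5$ forces the socle to be one of: an alternating group $A_\ell$, a projective special linear group $\PSL(d,q)$ acting on the $\ell = (q^d-1)/(q-1)$ points of $\mathbb{P}^{d-1}(\mathbb{F}_q)$, or one of the sporadic $2$-transitive actions, namely $\PSL(2,11)$ on $11$ points and the Mathieu groups $M_{11}$ on $11$ points and $M_{23}$ on $23$ points. Combining this with the description of the overgroups of the socle inside $S_\ell$ that remain $2$-transitive (for $\PSL(d,q)$ these are the groups between $\PSL(d,q)$ and $\mathrm{P}\Sigma\mathrm{L}(d,q)$) yields exactly the four cases listed in the proposition.

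For the second assertion, I would show that $T_0$ is non-empty by exhibiting, in each of the four cases, an element of $H_0$ that fixes only the coset $H_0$ under the action $(H_0 s)^g = H_0 s g^{-1}$. Since this action is permutation-isomorphic to the $2$-transitive action of $G_0$ on $\ell$ points with $H_0$ as a point stabilizer, this amounts to finding an element of $H_0$ which is a derangement on the remaining $\ell - 1$ points. For $G_0 = A_\ell$ or $S_\ell$ one can take a cycle of length $\ell - 1$ (of appropriate sign) supported off the fixed point. For $\PSL(2,11)$, $M_{11}$, $M_{23}$, a suitable element is read off from the known character tables or explicit generators. For $\PSL(d,q) \subseteq G_0 \subseteq \mathrm{P}\Sigma\mathrm{L}(d,q)$, the stabilizer of a point of $\mathbb{P}^{d-1}(\mathbb{F}_q)$ contains the image of an element of $\mathrm{GL}(d,q)$ acting irreducibly on a complementary hyperplane, which gives the required derangement.

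The principal obstacle is the classification of $2$-transitive groups of prime degree, which in full generality depends on CFSG; once this list is available, the verification that $T_0 \neq \emptyset$ is routine but requires a case-by-case inspection. A uniform alternative for the second step is to invoke the theorem, valid for every $2$-transitive group, that the point stabilizer always contains an element with a unique fixed point, proved via Frobenius reciprocity applied to the permutation character.
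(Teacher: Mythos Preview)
The paper does not supply a proof of this proposition; it is quoted verbatim from Zantema \cite[Section 7]{Zantema} and stated without argument. So there is no ``paper's own proof'' to compare against, and your task was really to reconstruct Zantema's reasoning. Your outline does this correctly and along the standard lines: Burnside's theorem on transitive groups of prime degree forces $G_0$ to be $2$-transitive once solvability is excluded, and the CFSG-based list of $2$-transitive groups of prime degree then gives exactly cases (i)--(iv).

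Two minor remarks. First, in case (i) for $G_0=A_\ell$ your suggested witness, an $(\ell-1)$-cycle, is an odd permutation (since $\ell-1$ is even) and hence does not lie in $A_\ell$; you need instead an even derangement of $\ell-1$ points, e.g.\ a product of two disjoint odd-length cycles covering the $\ell-1$ remaining symbols. Second, your uniform argument for $T_0\neq\emptyset$ can be phrased more cleanly: since $G_0$ is $2$-transitive, the point stabilizer $H_0$ acts \emph{transitively} on the remaining $\ell-1$ cosets, and Jordan's classical theorem that every transitive permutation group contains a derangement (which can indeed be proved by a character/Frobenius-type counting argument) then yields an element of $H_0$ with a unique fixed point. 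This gives a one-line proof of the second assertion, uniform across all four cases, and is presumably what Zantema had in mind.
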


\begin{lemma} \cite[Section 7]{Zantema} \label{lemma, R(G) is empty for cases 1 to 3}
 In the cases (i), (ii), and (iii) of Proposition \ref{proposition, possible groups G for non-cyclic}, except the case $G_0=A_5$, the quotient group $\frac{H_0}{\langle H_0^{\prime},T_0\rangle }$ is trivial, where $H_0^{\prime}$ denotes the derived subgroup of $H_0$.
\end{lemma}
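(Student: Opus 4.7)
The plan is to go through each of the cases (i)--(iii) in Proposition~\ref{proposition, possible groups G for non-cyclic} and directly compute $H_0$, $H_0'$, and enough of $T_0$ to see that $\langle H_0',T_0\rangle = H_0$. Throughout, $H_0$ is the point-stabilizer for the degree-$\ell$ action of $G_0$, so $T_0$ is just the set of elements of $H_0$ acting without fixed points on the remaining $\ell-1$ symbols.

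For case (i): if $G_0=S_\ell$, then $H_0\simeq S_{\ell-1}$ with derived subgroup $A_{\ell-1}$ (note $\ell-1\ge 4$), so the quotient $H_0/H_0'$ is of order $2$. To kill it, one must exhibit an odd derangement in $S_{\ell-1}$; for instance, an $(\ell-1)$-cycle is a derangement and is odd when $\ell-1$ is even, while for $\ell-1$ odd one can take a derangement of cycle type $(2,\ell-3)$ (a transposition times a disjoint $(\ell-3)$-cycle), which is odd. If $G_0=A_\ell$, then $H_0\simeq A_{\ell-1}$. For $\ell\ge 7$ prime, $\ell-1\ge 6$ and $A_{\ell-1}$ is a simple nonabelian group, hence perfect: $H_0'=H_0$ and the quotient is already trivial. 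The only value $\ell=5$ with $G_0=A_5$ gives $H_0=A_4$ with $H_0'=V_4$, which is precisely the excluded case.

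For case (ii): $G_0=\PSL(2,11)$ acts on $11$ points with point-stabilizer $H_0\simeq A_5$ of order $60$. Since $A_5$ is simple nonabelian, $H_0'=H_0$, giving the trivial quotient at once (regardless of $T_0$).

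For case (iii) with $G_0=M_{23}$: the point-stabilizer is $H_0\simeq M_{22}$, a simple nonabelian group, so again $H_0'=H_0$ and the quotient is trivial. The only nontrivial verification is $G_0=M_{11}$, where $H_0\simeq M_{10}\simeq A_6.2_3$; here $H_0'=A_6$ and $H_0/H_0'\simeq\mathbb{Z}/2\mathbb{Z}$. The main task is to produce a derangement on the remaining $10$ points that sits in the nontrivial coset of $A_6$. This is supplied by any element of order~$8$ in $M_{10}$: such an element has cycle type $(8,2)$ on the $10$ points (hence is a derangement), while $A_6$ contains no element of order~$8$, so the element lies outside $H_0'$. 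Thus $\langle H_0',T_0\rangle=H_0$. The main obstacle is purely bookkeeping: for each of the five listed groups one has to know the structure of the stabilizer well enough to identify its abelianization and to display at least one derangement in every nontrivial coset of the derived subgroup, which for $M_{10}$ requires some input from the character/cycle-type table of $M_{11}$.
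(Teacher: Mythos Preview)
Your case-by-case verification is correct. The paper does not give its own proof of this lemma; it simply cites \cite[Section 7]{Zantema}, where the computation of the quotient $H_0/\langle H_0',T_0\rangle$ (denoted $R(G)$ there) is carried out by the same kind of stabilizer analysis you perform. So your approach is essentially the one Zantema uses, made self-contained.

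Two minor remarks. First, since $\ell\ge 5$ is an odd prime, $\ell-1$ is always even, so your ``$\ell-1$ odd'' subcase in the $S_\ell$ discussion is vacuous; the $(\ell-1)$-cycle alone suffices. Second, your $M_{11}$ argument is right but it may be worth stating explicitly that elements of order $8$ in $M_{11}$ have cycle type $1\cdot 2\cdot 8$ in the natural degree-$11$ action (this can be read from the permutation character), so they fix exactly one point and hence lie in some $M_{10}$, acting as a $(2,8)$-derangement on the remaining ten.
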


\begin{lemma} \label{lemma, Ho,T0=T0}
In  the case (iv) of Proposition \ref{proposition, possible groups G for non-cyclic}, namely 
		\begin{equation} 
		PSL(2,q) \subseteq G_0 \subseteq P\Sigma L(2,q),
	\end{equation}
for $q$ a prime power, the following assertions hold:
	\begin{itemize}
		\item[(i)] We have $\langle H_0^{\prime},T_0\rangle =\langle T_0 \rangle$, where $H_0^{\prime}$ denotes the derived subgroup of $H_0$.
		
		\item[(ii)] For $d >2$, the quotient group $\frac{H_0}{\langle T_0 \rangle}$ is trivial.
		
		\item[(iii)] For $d=2$, there exists a homomorphism
		\begin{equation} \label{equation, map psi with kernel T0}
		\psi : H_0 \rightarrow \mathbb{F}_{q}
		\end{equation}
	such that $\Ker(\psi)=\langle T_0 \rangle$, and
	\begin{equation*}
			\mathrm{Im}(\psi)	= \left\{
		\begin{array}{ll}
			\left(\mathbb{F}_q^{\times}\right)^2, &   \text{if $q$ is odd}, \\
			& \\
		\mathbb{F}_q^{\times}, &   \text{if $q$ is a power of $2$}.\\
		\end{array}
		\right.
	\end{equation*}
	\end{itemize}

\end{lemma}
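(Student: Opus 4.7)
The plan is to compute $H_0$ concretely by identifying $\mathbb{P}^{d-1}(\mathbb{F}_q) \setminus \{P_0\}$, for a fixed point $P_0 \in \mathbb{P}^{d-1}(\mathbb{F}_q)$, with $\mathbb{F}_q^{d-1}$; every element of $H_0$ then acts as an affine-Galois map $x \mapsto A \sigma(x) + b$ and corresponds to a triple $(A, b, \sigma)$. Here $A$ lies in a Levi factor of the parabolic stabilizing $P_0$, $b \in \mathbb{F}_q^{d-1}$ parametrizes the unipotent radical $U$, and $\sigma$ lies in a subgroup $\Sigma \subseteq \Gal(\mathbb{F}_q/\mathbb{F}_p)$, with composition
\[
(A_1, b_1, \sigma_1)(A_2, b_2, \sigma_2) = (A_1 \sigma_1(A_2),\, A_1 \sigma_1(b_2) + b_1,\, \sigma_1 \sigma_2).
\]
Because $G_0 \subseteq \mathrm{P}\Sigma \mathrm{L}(d,q)$ rather than the larger $\mathrm{P}\Gamma\mathrm{L}(d,q)$, the scalar data of $A$ is restricted: for $d = 2$ and $q$ odd, the multiplier $c$ of $A$ lies in $(\mathbb{F}_q^\times)^2$, while for $q$ a power of $2$ it ranges over all of $\mathbb{F}_q^\times$. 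All subsequent steps will be driven by this concrete presentation.

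Part (i) will follow from two direct observations. First, an element $(A, b, \sigma)$ fixes some point of $\mathbb{F}_q^{d-1}$ if and only if $b \in \Image(\mathrm{id} - A\sigma)$; specializing to $A = I$ and $\sigma = \mathrm{id}$ shows every nonzero pure translation lies in $T_0$, so $U \subseteq \langle T_0 \rangle$. Second, a direct expansion of $[(A_1, b_1, \sigma_1), (A_2, b_2, \sigma_2)]$ via the multiplication rule above yields a triple whose Levi and Galois components are both trivial, i.e.\ every commutator is already a pure translation. Hence $H_0' \subseteq U \subseteq \langle T_0 \rangle$, giving $\langle H_0', T_0 \rangle = \langle T_0 \rangle$.

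For part (ii) ($d > 2$), the Levi contains an $\mathrm{SL}(d-1, q)$-type factor acting on a $(d-1)$-dimensional affine space, so for any $(A, \sigma)$ one can always find $b \in \mathbb{F}_q^{d-1}$ outside $\Image(\mathrm{id} - A\sigma)$ whenever this operator fails to be surjective; together with $U \subseteq \langle T_0 \rangle$ this realizes every class of $H_0/U$ inside $\langle T_0 \rangle$, forcing $H_0 = \langle T_0 \rangle$. For part (iii) ($d = 2$), I would define $\psi : H_0 \to \mathbb{F}_q^\times$ by $(c, b, \sigma) \mapsto c$, first descending to $H_0 / \langle T_0\rangle$ so that the twisted multiplication collapses to ordinary multiplication and $\psi$ is a well-defined homomorphism. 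Its image is the allowed set of $c$-values, namely $(\mathbb{F}_q^\times)^2$ for $q$ odd and $\mathbb{F}_q^\times$ for $q$ even. The main obstacle I anticipate lies precisely in verifying $\ker(\psi) = \langle T_0 \rangle$ and not a strictly larger subgroup: this reduces to pinning down which Galois-twisted elements $(1, b, \sigma)$ belong to $T_0$ --- equivalently, analyzing the $\mathbb{F}_p$-cokernel of $\mathrm{id} - \sigma$ on $\mathbb{F}_q$ --- and checking that these elements, together with $U$, exhaust the full preimage of $1$ under $c$-projection.
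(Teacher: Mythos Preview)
The paper does not prove this lemma itself; it simply cites the proof of Zantema's Theorem~7.1. Your overall plan --- realize $H_0$ as a point-stabilizer, exhibit the unipotent radical $U$ inside $\langle T_0\rangle$, and read off the quotient via a Levi-type projection --- is the right framework and is essentially Zantema's. Two concrete steps in your execution, however, fail. For $d>2$, the identification $\mathbb{P}^{d-1}(\mathbb{F}_q)\setminus\{P_0\}\cong\mathbb{F}_q^{d-1}$ is false already on cardinality grounds: the complement has $q\cdot\frac{q^{d-1}-1}{q-1}$ points, not $q^{d-1}$. The complement of a point in projective space is not an affine space once $d>2$, so your affine-map description of the $H_0$-action, and hence your argument for~(ii), does not get off the ground.

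More seriously, your commutator claim underlying~(i) is incorrect whenever $G_0\supsetneq\PSL(2,q)$. With your own multiplication rule in the case $d=2$ one computes
\[
[(c,0,1),(1,0,\sigma)]=(c\,\sigma(c)^{-1},\,0,\,1),
\]
which is not a translation unless $\sigma(c)=c$; thus $H_0'\not\subseteq U$ in general. The same Galois twist shows that $\psi(c,b,\sigma)=c$ is not a homomorphism on $H_0$ (since the product has first entry $c_1\sigma_1(c_2)\neq c_1c_2$), and your proposed remedy --- pass first to $H_0/\langle T_0\rangle$ --- is circular, as that quotient is precisely what~(iii) asks you to identify. The correct route is the reverse of yours: first prove directly that every $(1,0,\sigma)$ lies in $\langle T_0\rangle$ (multiply your $T_0$-element $(1,b,\sigma)$ by a suitable translation in $U\subseteq\langle T_0\rangle$), then determine exactly which further elements $(c,b,\sigma)$ with $c\neq 1$ lie in $T_0$ --- this genuinely depends on the position of $G_0$ inside $P\Sigma L(2,q)$ and is where Zantema's auxiliary index $k=[P\Gamma L(d,q):G_0\cdot PGL(d,q)]$ enters --- and only afterwards deduce~(i) from the abelianness of the resulting quotient.
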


\begin{proof}
	See \cite[Proof of Theorem 7.1]{Zantema}.
\end{proof}

\begin{remark}
	Note that the case $G_0=A_5=\PSL(2,4)$ is included in Lemma \ref{lemma, Ho,T0=T0}.
\end{remark}


Now, we can generalize Zantema's results in \cite[Corollary 7.2, Theorem 8.3]{Zantema}. This proves Conjecture \ref{conj, A5-fields} in a more general setting.

\begin{theorem} \label{theorem, our results for non-cyclic fields of prime degrees}
	Let $K/\mathbb{Q}$ be a non-Galois extension of prime degree $\ell \geq 5$. Denote by $N_K$ the Galois closure of $K$ (over $\mathbb{Q}$),  and let $G_0=\Gal(N_K/\mathbb{Q})$ be unsolvable. Then
		\begin{equation*}
			\Po(K)_{nr1}=\Po(K)_{nr}=\Po(K)=\Cl(K),
		\end{equation*}
		unless 
		\begin{equation} \label{equation, G is between Psl and PGammal}
			\PSL(2,q) \subseteq G_0 \subseteq \mathrm{P}\Sigma \mathrm{L}(2,q), 
		\end{equation}
	for some prime power $q$. In this case,
		 $\ell$  is a Fermat prime, $\frac{\Cl(K)}{\Po(K)_{nr1}}$ is a cyclic group whose order dividing $\ell -2$, and 
				\begin{equation} \label{equation, Po(K)nr1 is equal to Po(K)}
				\Po(K)_{nr1}=\Po(K)_{nr}=\Po(K).
			\end{equation} 

\end{theorem}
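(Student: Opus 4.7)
The plan is to combine the classification of $G_0$ in Proposition \ref{proposition, possible groups G for non-cyclic} with the Main Theorem (Theorem \ref{theorem, the composite maps}) and with Zantema's computations recorded in Lemmas \ref{lemma, R(G) is empty for cases 1 to 3} and \ref{lemma, Ho,T0=T0}. Since Proposition \ref{proposition, possible groups G for non-cyclic} ensures $T_0 \neq \emptyset$, Theorem \ref{theorem, CH} presents $\Cl(K)/\Po(K)_{nr1}$ as a quotient of $H_0/\langle T_0, H_0^{\prime}\rangle$. In cases (i), (ii), and (iii) of that proposition (excluding $G_0 = A_5$), this quotient is trivial by Lemma \ref{lemma, R(G) is empty for cases 1 to 3}; in case (iv) with $d > 2$ it is trivial by Lemma \ref{lemma, Ho,T0=T0}(ii). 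In either situation $\Po(K)_{nr1} = \Cl(K)$, so the chain $\Po(K)_{nr1} \subseteq \Po(K)_{nr} \subseteq \Po(K) \subseteq \Cl(K)$ collapses to equalities, establishing the first assertion of the theorem.

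In the remaining exceptional case $\PSL(2,q) \subseteq G_0 \subseteq \mathrm{P}\Sigma \mathrm{L}(2,q)$ with $q = p^k$ and $\ell = q + 1$, the fact that $\ell \geq 5$ is an odd prime forces $p = 2$, so $q = 2^k$ and $\ell = 2^k + 1$. For $\ell$ to be prime, $k$ must itself be a power of $2$, making $\ell$ a Fermat prime. By Lemma \ref{lemma, Ho,T0=T0}(i),(iii) applied with $p = 2$, the group $H_0/\langle T_0, H_0^{\prime}\rangle = H_0/\langle T_0\rangle$ embeds via $\psi$ as all of $\mathbb{F}_q^{\times}$ (the whole multiplicative group, because $q$ is a power of $2$), which is cyclic of order $q - 1 = \ell - 2$. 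Theorem \ref{theorem, CH} then forces $\Cl(K)/\Po(K)_{nr1}$ to be cyclic of order dividing $\ell - 2$.

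The main obstacle is the assertion $\Po(K)_{nr1} = \Po(K)_{nr} = \Po(K)$ in this exceptional case. Since $\Ker(\Phi) \subseteq \Po(K)_{nr1}$ by the Main Theorem, it suffices to prove $\Phi(\Po(K)) = \Phi(\Po(K)_{nr1})$ inside the abelian group $H_0/\langle T_0\rangle \hookrightarrow \mathbb{F}_q^{\times}$. By Theorem \ref{theorem, the composite maps}(i), for every prime $p$ and every $f \geq 1$, $\Phi([\Pi_{p^f}(K)])$ is the product $\prod_{\{i \mid f_i = f\}} \hat{s}_i g_{\mathfrak{P}_0}^{f_i} \hat{a}_i \hat{s}_i^{-1}$ modulo $\langle T_0\rangle$, a quantity depending only on $g_{\mathfrak{P}_0} \in G_0$ and on the combinatorics of the decomposition and inertia data at $\mathfrak{P}_0$ acting on the cosets of $H_0$. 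For unramified $p$ the elements $\hat{a}_i$ are trivial, and combining Chebotarev's density theorem (which realizes every conjugacy class of $G_0$ as the Frobenius of some unramified prime) with the relation $\prod_{f} [\Pi_{p^f}(K)] = [p\mathcal{O}_K] = 0$ in $\Cl(K)$ allows one to express each $[\Pi_{p^f}(K)]$ with $f > 1$ modulo $\Ker(\Phi)$ as a combination of first-power classes $[\Pi_{p'}(K)]$, yielding $\Po(K)_{nr1} = \Po(K)_{nr}$. For ramified $p$, one selects via Chebotarev an unramified prime $p'$ whose Frobenius in $G_0$ is conjugate to a chosen lift of $g_{\mathfrak{P}_0}$; since $\mathbb{F}_q^{\times}$ is abelian and the ramification contribution merges into the abelian image, $\Phi([\Pi_{p^f}(K)])$ matches the image of a class in $\Po(K)_{nr1}$, forcing $\Po(K) = \Po(K)_{nr}$. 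Finally, since $A_5 \cong \PSL(2,4)$ falls into this exceptional case with $\ell = 5$, Conjecture \ref{conj, A5-fields} is a direct consequence.
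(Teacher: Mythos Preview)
Your treatment of the non-exceptional cases and of the arithmetic of $\ell$ in the exceptional case is correct and matches the paper. The problem is the final step, showing $\Po(K)_{nr1}=\Po(K)$ when $\PSL(2,q)\subseteq G_0\subseteq \mathrm{P}\Sigma \mathrm{L}(2,q)$: your Chebotarev argument does not do the job.

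For a ramified prime $p$ the class $\Phi([\Pi_{p^f}(K)])$ equals $\prod_{\{i\mid f_i=f\}} \hat{s}_i g_{\mathfrak{P}_0}^{f_i}\hat{a}_i\hat{s}_i^{-1}$ modulo $\langle T_0\rangle$, where the $\hat{a}_i$ lie in the inertia group $T(\mathfrak{P}_0/p)$ and the integers $f_i$ record how the \emph{full} decomposition group $Z(\mathfrak{P}_0/p)$, not just the Frobenius, splits the cosets of $H_0$. Picking an unramified $p'$ with Frobenius conjugate to $g_{\mathfrak{P}_0}$ gives a completely different orbit pattern (the $f_i'$ are now the cycle lengths of $\langle g_{\mathfrak{P}_0}\rangle$ alone), and there is no mechanism by which the inertia contributions $\hat{a}_i$ ``merge into the abelian image''; abelianness of $\mathbb{F}_q^{\times}$ says nothing about whether these elements land in $\langle T_0\rangle$. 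Likewise, the single relation $\prod_f[\Pi_{p^f}(K)]=0$ is not enough to reduce all $f>1$ contributions to $f=1$ contributions when several residue degrees occur.

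The paper proceeds very differently: it proves that $\Phi$ is identically zero on $\Po(K)$. Two ingredients you are missing are essential. First, unramifiedness of $H_K/K$ forces $\hat{s}_i T(\mathfrak{P}_0/p)\hat{s}_i^{-1}\cap H_0\subseteq\langle T_0\rangle$ for every $i$; this is what controls the $\hat{a}_i$. Second, one needs the explicit classification of solvable subgroups of $\PSL(2,q)$ (they are either upper-triangular or dihedral-type) together with the description of $\langle T_0\rangle$ via the homomorphism $\psi$ of Lemma~\ref{lemma, Ho,T0=T0}(iii). Using these, the paper runs a case analysis on the shape of $\widetilde{Z(\mathfrak{P}_0/p)}$ and $\widetilde{T(\mathfrak{P}_0/p)}$, showing that each factor $\hat{s}_i g_{\mathfrak{P}_0}^{f_i}\hat{a}_i\hat{s}_i^{-1}$ has $2$-power order in $H_0/\langle T_0\rangle$; since $|H_0/\langle T_0\rangle|$ divides the odd number $q-1$, every such factor is trivial there. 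This is the actual content of the argument, and it cannot be replaced by a density statement.
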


\begin{proof}
By Proposition \ref{proposition, possible groups G for non-cyclic}, the set $T_0$ is non-empty. Hence, we can apply Theorem  \ref{theorem, CH}. If $G_0$ is not satisfying \eqref{equation, G is between Psl and PGammal}, then using Lemma \ref{lemma, R(G) is empty for cases 1 to 3} and Theorem \ref{theorem, CH}, we find $\Po(K)_{nr1}=\Cl(K)$. 

Now
 suppose that 
		\begin{equation*} 
		\PSL(2,q) \subseteq G_0 \subseteq \mathrm{P}\Sigma \mathrm{L}(2,q), 
	\end{equation*}
for some prime power $q$.
	Since $[K:\mathbb{Q}]=\ell$ is prime, $q-1$ is odd, see \cite[Section 7]{Zantema}. On the other hand, $q+1=\frac{q^2-1}{q-1}=\ell$ is  prime. Hence, $q$ must be a power of $2$, which implies that $\ell$ is a Fermat prime. Theorem \ref{theorem, CH} and Lemma \ref{lemma, Ho,T0=T0} show that $\frac{\Cl(K)}{\Po(K)_{nr1}}$ is a cyclic group whose order is a divisor of $q-1=\ell -2$. It remains to show that
	\begin{equation*}
	\Po(K)_{nr1}=\Po(K).
	\end{equation*}
	To accomplish this, we apply Theorem \ref{theorem, the composite maps}.
	Keep the same notations introduced in Section \ref{section, image of Po(K)}. Using Lemma \ref{lemma, Ho,T0=T0}, one can write the composite map \eqref{equation, composite map Lambda o theta} as
	\begin{equation*} 
	\Po(K) \xrightarrow{\alpha_K} \frac{H}{H_1} \xrightarrow{\theta}  \frac{H}{\langle H_1,T\rangle } \xrightarrow{\lambda} \frac{H_0}{\langle T_0 \rangle}.
\end{equation*}
We show that the map
\begin{equation*}
	\Phi=\lambda \circ \theta \circ \alpha_K : \Po(K)\rightarrow \frac{H_0}{\langle T_0 \rangle},
\end{equation*}
  given in Theorem \ref{theorem, the composite maps}, is the zero map, i.e., we show that
  \begin{equation} \label{equation, map Phi is zero}
  \Phi \left([\Pi_{p^f}(K)]\right) \in \langle T_0 \rangle, \quad \forall \,  \text{$p$ prime}, \, \forall f \in \mathbb{N}.
  \end{equation}
Then the equality $\Po(K)_{nr1}=\Po(K)$ follows from part (ii) of Theorem \ref{theorem, the composite maps}.
Let $p$ be a prime number and fix a prime $\mathfrak{P}$ of $N_H$ above $p$, where $N_H$ denotes the Galois closure of $H_K$ (over $\mathbb{Q}$). Using the notations of Section \ref{section, image of Po(K)}, since $H_K/K$ is unramified, we have
\begin{equation} \label{equation, T cap H subset H1}
s_i T(\mathfrak{P}/p)s_i^{-1} \cap H \subseteq H_1, \quad \forall i=1,\dots,t,
\end{equation}
	see \cite[Section 8]{Zantema}. As we have seen in the proof of Theorem \ref{theorem, the composite maps}, the map $\lambda$ is induced by the canonical surjection $\pi : G \rightarrow G/H_2$. Now, similar to \cite[Lemma 3.10]{Chabert II}, one can show that
	\begin{equation} \label{equation, canonical surjection pi}
		\pi\left(T(\mathfrak{P}/p)\right)=\frac{T(\mathfrak{P}/p)}{T(\mathfrak{P}/p) \cap H_2}.
	\end{equation}
	 On the other hand
	\begin{equation} \label{equation, T(B) mod H2 T(B0)}
		\frac{T(\mathfrak{P}/p)}{T(\mathfrak{P}/p) \cap H_2} \simeq T(\mathfrak{P}_0/p),
	\end{equation}
see \cite[$\S 7$, Proposition 22]{Serre}.
 Hence by \eqref{equation, T cap H subset H1}, \eqref{equation, canonical surjection pi} and \eqref{equation, T(B) mod H2 T(B0)} we get	
	\begin{align} \label{equation, cap H0 sub T0}
\hat{s}_i T(\mathfrak{P}_0/p)\hat{s}_i^{-1} \cap H_0 =\lambda \circ \theta \left(s_i T(\mathfrak{P}/p)s_i^{-1} \cap H\right)	 \subseteq \langle T_0 \rangle, \quad  \forall i=1,\dots,t, 
	\end{align}
where $\hat{s}_i=s_i |_{N_K}$,  and $\mathfrak{P}_0=\mathfrak{P} \cap N_K$. By a similar argument, it can be shown that
\begin{equation*}
\lambda \circ \theta \left(s_i Z(\mathfrak{P}/p)s_i^{-1} \cap H\right)	=\hat{s}_i Z(\mathfrak{P}_0/p)\hat{s}_i^{-1} \cap H_0, \quad \forall i=1,\dots,t.
\end{equation*}
Now if for every $i=1,\dots,t$, 
\begin{equation*}
\hat{s}_i Z(\mathfrak{P}_0/p)\hat{s}_i^{-1} \cap H_0 \subseteq \langle T_0 \rangle,
\end{equation*}
then, by Theorem \ref{theorem, the composite maps} for every positive integer $f$ we have
\begin{align*}	
\Phi \left([\Pi_{p^f}(K)]\right)=\prod_{\{i | f_i=f\}} \hat{s}_i g_{\mathfrak{P}_0}^{f_i} \hat{a}_i \hat{s}_i^{-1}  \in \hat{s}_i Z(\mathfrak{P}_0/p)\hat{s}_i^{-1} \cap H_0 \subseteq \langle T_0 \rangle,  
\end{align*}
which implies that \eqref{equation, map Phi is zero} holds.
So, assume that for at least one conjugate of $Z(\mathfrak{P}/p)$, say $Z(\mathfrak{P}/p)$ itself, we have
\begin{equation}
\lambda \circ \theta \left( Z(\mathfrak{P}/p) \cap H \right)	=Z(\mathfrak{P}_0/p) \cap H_0 \not \subseteq \langle T_0 \rangle.
\end{equation}
For each subgroup $B$ of $G_0$, let $\widetilde{B}:=B \cap PSL(2,q)$. Since $G_0=\frac{G}{H_2}$, by \eqref{equation, T cap H subset H1} we have
\begin{equation} \label{equation,  Ttilda is subseteq T0}
	x \widetilde{T(\mathfrak{P}_0/p)} x^{-1} \cap \widetilde{H}_0 \subseteq \widetilde{\langle T_0 \rangle}, \quad \forall x \in PSL(2,q),
\end{equation}
while
\begin{equation} \label{equation, Ztilda is not subseteq T0}
\widetilde{Z(\mathfrak{P}_0/p)} \cap \widetilde{H}_0 \not\subseteq \widetilde{\langle T_0 \rangle},
\end{equation}
because the index $\left[G_0 : \PSL(2,q)\right]$ is a $2$-power, and by part (iii) of Lemma \ref{lemma, Ho,T0=T0}, the index $\left[ Z(\mathfrak{P}_0/p) \cap H_0 : Z(\mathfrak{P}_0/p) \cap \langle T_0 \rangle \right]$ is a divisor of $\mathbb{F}_q^{\times}$. On one hand,  the groups $T(\mathfrak{P}_0/p)$ and $Z(\mathfrak{P}_0/p)$ are solvable. On the other hand, a solvable subgroup of $PSL(2,q)$ is either of the form
\begin{equation*}
	\begin{pmatrix}
	\langle a\rangle  & V \\
	0 & 1	
	\end{pmatrix},
\end{equation*}
or 
\begin{equation*}
\left\langle
	\begin{pmatrix}
	0 & 1\\
	1 & 0	
\end{pmatrix},
	\begin{pmatrix}
	a & 0\\
	0 & 1	
\end{pmatrix}
\right\rangle
\end{equation*}
for some $a \in \mathbb{F}_q^{\times}$ and some vector space $V$ over $\mathbb{F}_2(a)$ \cite[Proof of Theorem 8.3, page 193]{Zantema}. Let $\sigma \in \Gal(\mathbb{F}_q/\mathbb{F}_2)$ such that
\begin{equation*}
	G_0=\left\langle \PSL(2,q), \sigma \right\rangle .
\end{equation*}
Then one may choose coordinates (for $\mathbb{P}^{d-1}(\mathbb{F}_q)$) such that
\begin{equation*}
	H_0=\left\langle \begin{pmatrix}
		\mathbb{F}_q^{\times} & \mathbb{F}_q\\
		0 & 1	
	\end{pmatrix}, \sigma \right\rangle.
\end{equation*}
By Lemma \ref{lemma, Ho,T0=T0} (part (iii)), one can write
\begin{equation} \label{equation, matrix form for T0}
	\langle T_0 \rangle=\left\langle\begin{pmatrix}
		A & \mathbb{F}_q\\
		0 & 1	
	\end{pmatrix}, \sigma \right\rangle,
\end{equation}
for some subgroup $A$ of $\mathbb{F}_q^{\times}$. 
 Now if
\begin{equation*}
	\widetilde{Z(\mathfrak{P}_0/p)}=s \left\langle
	\begin{pmatrix}
		0 & 1\\
		1 & 0	
	\end{pmatrix},
	\begin{pmatrix}
		a & 0\\
		0 & 1	
	\end{pmatrix}
	\right\rangle s^{-1}
\end{equation*}
for some $s \in PSL(2,q)$, then
\begin{equation*}
\widetilde{T(\mathfrak{P}_0/p)}=s
\begin{pmatrix}
	\langle a\rangle  & 0\\
	0 & 1	
\end{pmatrix}
s^{-1},
\end{equation*}
since $\widetilde{T(\mathfrak{P}_0/p)}$ is a normal subgroup of $\widetilde{Z(\mathfrak{P}_0/p)}$ with a cyclic factor group. Let $x \in \widetilde{H}_0$ and denote by $\gamma_x$ the quotient of the two eigenvalues of $x$, defined up to taking the inverse. Then
\begin{equation*}
\gamma_x	= \left\{
	\begin{array}{ll}
		1,  &  \text{if}\, \,  \, x=s \begin{pmatrix}
			0 & a^i \\
			1 & 0
		\end{pmatrix}
	s^{-1}, \\
		& \\
		a^i, & \text{if}\, \,  \, x=s \begin{pmatrix}
			a^i & 0 \\
			0 & 1
		\end{pmatrix}
	s^{-1}.
			\\
	\end{array}
	\right.
\end{equation*}
By \eqref{equation, matrix form for T0}, 
 $x \in \widetilde{\langle T_0 \rangle}$ if and only if $\gamma_x \in A$. From \eqref{equation, Ztilda is not subseteq T0}, it follows that
 $a \not \in A$. Therefore, we obtain
 \begin{equation*}
 	s \widetilde{T(\mathfrak{P}_0/p)} s^{-1} \cap \widetilde{H}_0 \not \subseteq \widetilde{\langle T_0 \rangle},
 \end{equation*}
which contradicts the relation \eqref{equation,  Ttilda is subseteq T0}. Now assume that
\begin{equation*}
\widetilde{Z(\mathfrak{P}_0/p)}=s \begin{pmatrix}
	\langle a\rangle  & V \\
	0 & 1	
\end{pmatrix} s^{-1},
\end{equation*}
and
\begin{equation*}
	\widetilde{T(\mathfrak{P}_0/p)}=s \begin{pmatrix}
		\langle b\rangle  & W \\
		0 & 1	
	\end{pmatrix} s^{-1}
\end{equation*}
for some $s \in PSL(2,q)$, $a,b \in \mathbb{F}_q^{\times}$, and some vector spaces $V$, $W$ over $\mathbb{F}_2(a)$. Since
\begin{equation*}
	\widetilde{Z(\mathfrak{P}_0/p)} \cap \widetilde{H}_0 \not\subseteq \widetilde{\langle T_0 \rangle},
\end{equation*}
there exists an element $x \in \widetilde{Z(\mathfrak{P}_0/p)}$ for which $\gamma_x \not \in A$, while by \eqref{equation, cap H0 sub T0} we have $b \in A$. Denote by $r_a$ and $r_b$ the order of $a$ and $b$, respectively, and let $v=|V|$. Recall that, by Proposition \ref{proposition, Zantema result for phii's and psii's}, the prime $p$ will be decomposed in $K/\mathbb{Q}$ as
	\begin{equation} \label{equation, decomposition form of p}
	p \mathcal{O}_K=\prod_{i=1}^{t} (s_i(\mathfrak{P})\cap K)^{e_i},
\end{equation}
where the residue class degree of $s_i(\mathfrak{P})\cap K$ is $f_i$. Zantema \cite[Proof of Theorem 8.3]{Zantema} showed that two cases are possible:
\begin{itemize}
	\item[Case (1):] $e_1=f_1=f_2=1$, $e_2=v$, and for every $i \geq 3$, $e_i$ (resp. $f_i$) is a $2$-power times $r_b$ (resp. is a $2$-power times $r_a/r_b$);
\vspace*{0.2cm}
	\item[Case (2):]  $e_1f_1=2$, and for every $i \geq 2$, $e_i$ (resp. $f_i$) is a $2$-power times $r_b$ (resp. is a $2$-power times  $r_a/r_b$).
\end{itemize}
Let $\left(Z(\mathfrak{P}_0/p):T(\mathfrak{P}_0/p)\right)=(w.r_a)/r_b$. In Case (1), set
\begin{equation*}
\quad w_i=(w.r_a)/(r_b.f_i), \, \, \forall i \geq 3.
\end{equation*}
Likewise, in Case (2), let
\begin{equation*}
	\quad w_i=(w.r_a)/(r_b.f_i), \, \, \forall i \geq 2
\end{equation*}
(Note that $w$ and $w_i$'s are $2$-powers). By Proposition \ref{proposition, Zantema result for phii's and psii's}, and Theorem \ref{theorem, the composite maps}, we have
\begin{equation*}
	\lambda \circ \theta \left(	s_ig_{\mathfrak{P}}^{f_i}a_is_i^{-1}\right) =\hat{s}_i g_{\mathfrak{P}_0}^{f_i} \hat{a}_i \hat{s}_i^{-1} \in H_0, \quad \forall i=1,\dots,t.
\end{equation*}
Also, one can show that in Case (1) (resp. Case (2)), for every $i \geq 3$ (resp. $i \geq 2$), one has
\begin{equation*}
	\left(g_{\mathfrak{P}_0}^{f_i} \hat{a}_i\right)^{w_i} \in T(\mathfrak{P}_0/p),
\end{equation*}
since $\hat{a}_i, g_{\mathfrak{P}_0}^{f_i w_i} \in T(\mathfrak{P}_0/p)$ and $Z(\mathfrak{P}_0/p)/T(\mathfrak{P}_0/p)$ is abelian. Therefore, in Case (1) (resp. in Case (2)) for every $i \geq 3$ (resp. for every $i \geq 2$) we have
\begin{equation*} 
\left(\Phi \left((s_i(\mathfrak{P}) \cap K)\right)\right)^{w_i}=\left(\hat{s}_i g_{\mathfrak{P}_0}^{f_i} \hat{a}_i \hat{s}_i^{-1}\right)^{w_i} \in \left(\hat{s}_i T(\mathfrak{P}_0/p) \hat{s}_i^{-1}\right) \cap H_0 \subseteq \langle T_0 \rangle.
\end{equation*}
(The latter inclusion follows from the relation \eqref{equation, cap H0 sub T0}). On the one hand, $w_i$ is a $2$-power, and, on the other hand, by part (iii) of Lemma \ref{lemma, Ho,T0=T0} the order of $H_0/\langle T_0 \rangle$ is a divisor of $q-1$ (which is odd). Consequently, in Case (1) (resp. in Case (2)) for every $i \geq 3$ (resp. for every $i \geq 2$), we have
\begin{equation} \label{equation, the image of Phi is contained in T0 for i geq 3}
	\Phi \left((s_i(\mathfrak{P}) \cap K)\right)=\hat{s}_i g_{\mathfrak{P}_0}^{f_i} \hat{a}_i \hat{s}_i^{-1} \in \langle T_0 \rangle.
\end{equation}
Using the decomposition form of $p$ in $K/\mathbb{Q}$, as given in \eqref{equation, decomposition form of p}, we obtain
\begin{equation} \label{equation, Phi(p) is equal to one}
1=\Phi \left( p \mathcal{O}_K\right)=\Phi \left(\prod_{i=1}^{t} (s_i(\mathfrak{P})\cap K)^{e_i}\right) = \prod_{i=1}^t (\hat{s}_i g_{\mathfrak{P}_0}^{f_i} \hat{a}_i \hat{s}_i^{-1})^{e_i} \in \langle T_0 \rangle.
\end{equation}
Hence by the relations \eqref{equation, the image of Phi is contained in T0 for i geq 3} and \eqref{equation, Phi(p) is equal to one}, for Case (2) we obtain
\begin{equation*}
 \left(\Phi(s_1(\mathfrak{P}) \cap K)\right)^{e_1} \in \langle T_0 \rangle.
\end{equation*}
Since $e_1 \leq 2$ and the order of $H_0/\langle T_0 \rangle$ is odd, we conclude that in Case (2)
\begin{equation*}
	\Phi \left((s_1(\mathfrak{P}) \cap K) \right)=\hat{s}_1 g_{\mathfrak{P}_0}^{f_1} \hat{a}_1 \hat{s}_1^{-1}\in \langle T_0 \rangle.
\end{equation*}
Thus, in Case (2) we have
\begin{equation*}
\hat{s}_i g_{\mathfrak{P}_0}^{f_i} \hat{a}_i \hat{s}_i^{-1} \in \langle T_0 \rangle, \quad \forall i=1,\dots,t, 
\end{equation*}
which implies that
\begin{equation*}
	\Phi \left([\Pi_{p^f}(K)]\right)=\prod_{\{i | f_i=f\}} \hat{s}_i g_{\mathfrak{P}_0}^{f_i} \hat{a}_i \hat{s}_i^{-1} \in \langle T_0 \rangle, \quad \forall f \in \mathbb{N}.
\end{equation*}

Note that $r_a \neq 1$, and $\langle a\rangle  \neq \langle b\rangle $, so $r_a/r_b >1$, see \cite[Proof of Theorem 8.3]{Zantema}. Hence, in Case (1), one has $f_i > 1$, for every  $i \geq 3$. Therefore, in this case, we have
\begin{equation*}
	\Pi_{p^1}(K)=(s_1(\mathfrak{P}) \cap K)(s_2(\mathfrak{P})\cap K).
\end{equation*}
Since $V$ is a vector space over $\mathbb{F}_2(a)$, $r_a$ is a divisor of $v-1$, so  $r_a/r_b$ is also dividing  $v-1$. Hence one can write $v-1= (r_a/r_b) \alpha$ for some positive integer $\alpha$. Now from 
\begin{equation*}
	\left(Z(\mathfrak{P}_0/p):T(\mathfrak{P}_0/p)\right)=(w.r_a)/r_b,
\end{equation*}
we obtain
\begin{equation*}
	\left(\hat{s}_2 g_{\mathfrak{P}_0} \hat{a}_2 \hat{s}_2^{-1}\right)^{(v-1)w}=\left( (\hat{s}_2 g_{\mathfrak{P}_0} \hat{a}_2 \hat{s}_2^{-1})^{(w.r_a)/r_b} \right)^{\alpha} \in (\hat{s}_2 T(\mathfrak{P}_0) \hat{s}_2^{-1}) \cap H_0 \subseteq \langle T_0 \rangle,
\end{equation*}
in which the latter inclusion follows from \eqref{equation, cap H0 sub T0}. Since $w$ is a $2$-power and the order of $H_0/\langle T_0 \rangle$ is a divisor of $q-1$, we get
\begin{equation*}
\left(\hat{s}_2 g_{\mathfrak{P}_0} \hat{a}_2 \hat{s}_2^{-1}\right)^{(v-1)} \in \langle T_0 \rangle.
\end{equation*} 
Finally, using the decomposition form of $p$, as in \eqref{equation, decomposition form of p}, and the relation \eqref{equation, Phi(p) is equal to one} we have
\begin{equation*}
	(\hat{s}_1 g_{\mathfrak{P}_0} \hat{a}_1 \hat{s}_1^{-1}) (\hat{s}_2 g_{\mathfrak{P}_0} \hat{a}_2 \hat{s}_2^{-1}) (\hat{s}_2 g_{\mathfrak{P}_0} \hat{a}_2 \hat{s}_2^{-1})^{(v-1)}\left(\prod_{i=3}^t (\hat{s}_i g_{\mathfrak{P}_0}^{f_i} \hat{a}_i \hat{s}_i^{-1})^{e_i} \right) \in \langle T_0 \rangle,
\end{equation*}
which implies that
\begin{equation*}
	\Phi \left( [\Pi_{p^1}(K)]\right)= (\hat{s}_1 g_{\mathfrak{P}_0} \hat{a}_1 \hat{s}_1^{-1}) (\hat{s}_2 g_{\mathfrak{P}_0} \hat{a}_2 \hat{s}_2^{-1}) \in \langle T_0 \rangle.
\end{equation*}
The proof is complete.
\end{proof}

\begin{remark}
	Applying the above theorem for $G_0=A_5 \simeq \PSL(2,4)$, we get
	\begin{equation*}
\# \frac{\Cl(K)}{\Po(K)}=\#\frac{\Cl(K)}{\Po(K)_{nr}}=1 \, \text{or} \,\, 3,
	\end{equation*}
which has already obtained by Chabert and Halberstadt \cite[Proposition 6.8, Part (ii)]{Chabert II}.  
\end{remark}

\section{Proof of Chabert and Halberstadt's conjecture Concerning $D_4$-fields} \label{Section, even dihedral fields}
In this section, we show that not only  Conjecture \ref{conj, D4-fields} is true, but more generally, such assertion holds for all $D_n$-fields $K$, where $n \geq 4$ is an even integer. 
 The proof is only based on the decomposition forms of primes in $K/\mathbb{Q}$, which is an entirely different approach from the ones used in the previous sections.

\begin{theorem} \cite[Theorem 5.2]{Chabert II}, \cite[Theorem 2.2]{M4} \label{theorem, CH-Dn fields}
	Let $K$ be a $D_n$-field, where $n \geq 4$ is an even integer. Denote  by $E$ the unique subfield of $K$ of degree $\frac{n}{2}$ over $\mathbb{Q}$. Then 
	\begin{equation*}
		\Po(K)_{nr}=\epsilon_{K/E}(\Cl(E)),
	\end{equation*}
	where
	$\epsilon_{K/E}:\Cl(E) \rightarrow \Cl(K)$
	denotes the capitulation map.
	
\end{theorem}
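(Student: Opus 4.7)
The plan is to establish the two inclusions $\Po(K)_{nr}\subseteq\epsilon_{K/E}(\Cl(E))$ and $\epsilon_{K/E}(\Cl(E))\subseteq\Po(K)_{nr}$ separately, driven by the dihedral structure. Write $G=\Gal(N_K/\mathbb{Q})\simeq D_n=\langle r,s\rangle$ and $H=\Gal(N_K/K)=\langle s\rangle$; since $n$ is even, $r^{n/2}$ is central, so $H$ is contained in the unique order-$4$ subgroup $H'=\langle s,r^{n/2}\rangle=\Gal(N_K/E)$, which is Klein four. In particular, $[K:E]=2$ and $K/E$ is Galois: this is the key structural input.

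For $\Po(K)_{nr}\subseteq\epsilon_{K/E}(\Cl(E))$, fix an unramified prime $p$ in $K/\mathbb{Q}$ (hence also in $K/E$). Each prime $\mathfrak{P}$ of $K$ above $p$ either pairs with another prime $\mathfrak{P}'$ above the same $\mathfrak{p}\subset E$ (with $\mathfrak{p}\mathcal{O}_K=\mathfrak{P}\mathfrak{P}'$ and $N_{E/\mathbb{Q}}\mathfrak{p}=N_{K/\mathbb{Q}}\mathfrak{P}$), or equals $\mathfrak{p}\mathcal{O}_K$ for an inert $\mathfrak{p}$ (with $(N_{E/\mathbb{Q}}\mathfrak{p})^2=N_{K/\mathbb{Q}}\mathfrak{P}$). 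Grouping the primes of $K$ with common norm $p^f$ accordingly yields
\[
\Pi_{p^f}(K)=\Big(\prod_{\substack{\mathfrak{p}\subset E,\,N_{E/\mathbb{Q}}\mathfrak{p}=p^f\\ \mathfrak{p}\text{ split in }K/E}}\mathfrak{p}\Big)\mathcal{O}_K\cdot\Big(\prod_{\substack{\mathfrak{p}\subset E,\,N_{E/\mathbb{Q}}\mathfrak{p}=p^{f/2}\\ \mathfrak{p}\text{ inert in }K/E}}\mathfrak{p}\Big)\mathcal{O}_K,
\]
which is the extension to $K$ of an ideal of $E$. Hence $[\Pi_{p^f}(K)]\in\epsilon_{K/E}(\Cl(E))$.

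For the reverse inclusion, I would invoke the Chabert--Halberstadt equality $\Po(K)_{nr}=\Po(K)_{nr1}$ of \cite[Proposition 5.1]{Chabert II}, reducing to showing $\epsilon_{K/E}([\mathfrak{p}])\in\Po(K)_{nr1}$ for enough generators $[\mathfrak{p}]$ of $\Cl(E)$. By Chebotarev density applied to $N_K\cdot H_E$, take $\mathfrak{p}$ to be a degree-one prime of $E$ that splits in $K/E$; the rational prime $p$ below then has Frobenius in $G$ conjugate to $s$, an even reflection. A short cycle computation shows that the only fixed cosets of $H$ in $G$ under such a Frobenius are $Hr^{k_0}$ and $Hr^{k_0+n/2}$, and both project to the same coset $H'r^{k_0}$ in $G/H'$ because $r^{n/2}\in H'$. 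Consequently the two residue-degree-one primes of $K$ above $p$ both lie above $\mathfrak{p}$, so $\Pi_p(K)=\mathfrak{p}\mathcal{O}_K$ and $\epsilon_{K/E}([\mathfrak{p}])\in\Po(K)_{nr1}$.

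The main obstacle is the edge case $K\subseteq H_E$, equivalent to $K/E$ being everywhere unramified: then only the index-two subgroup $\Gal(H_E/K)\subseteq\Cl(E)$ is realized by primes splitting in $K/E$, and classes in the complementary coset are represented only by inert degree-one primes, for which $\mathfrak{p}\mathcal{O}_K$ is a single prime of norm $p^2$ while $\Pi_{p^2}(K)=p\mathcal{O}_K$ is principal. For those classes I would use the Klein-four structure of $H'$, the two other quadratic subextensions of $N_K/E$, and the norm identity $N_{K/E}\circ\epsilon_{K/E}=2$ on $\Cl(E)$, combining contributions from primes with distinct Frobenius conjugacy classes inside $H'$ to recover the required classes inside $\Po(K)_{nr1}$.
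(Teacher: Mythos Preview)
The paper does not prove this statement at all: it is quoted as \cite[Theorem 5.2]{Chabert II} and \cite[Theorem 2.2]{M4} with no argument supplied, and is then used as a black box in the proof of Theorem~\ref{theorem, our results for Dn-fields}. So there is no ``paper's own proof'' to compare your attempt against.

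On the merits of your outline: the inclusion $\Po(K)_{nr}\subseteq\epsilon_{K/E}(\Cl(E))$ is correct and clean, exactly because $K/E$ is quadratic and unramified above $p$, so every $\Pi_{p^f}(K)$ is visibly extended from $E$. The reverse inclusion, however, has a genuine gap at the very point you flag. In the edge case $K\subseteq H_E$, take a class $c$ in the nontrivial coset of $\Gal(H_E/K)$: any degree-one $\mathfrak p$ representing $c$ has $\text{Frob}_{\mathfrak p}\in H'\setminus H=\{r^{n/2},sr^{n/2}\}$. For Frobenius $r^{n/2}$ (central) $p$ splits completely in $E$, every prime of $K$ above $p$ has residue degree $2$, and $\Pi_{p^2}(K)=p\mathcal O_K$ is principal; for Frobenius $sr^{n/2}$ with $n\equiv 2\pmod 4$ (where $sr^{n/2}$ is not conjugate to $s$) there are no degree-one primes in $K$ at all and again $\Pi_{p^2}(K)=p\mathcal O_K$. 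In both situations the Ostrowski classes you can form from such $p$ are trivial, so there is nothing to ``combine'', and the norm identity $N_{K/E}\circ\epsilon_{K/E}=2$ only controls $2\Cl(E)$, not the missing coset. More generally, the constraint coming from $L=N_K\cap H_E$ (which can be any of $E$, $K$, the two other quadratic subextensions, or $N_K$) must be dealt with case by case; your sketch does not do this. To close the argument you would need either the structural input from the cited references or a substantially different device for producing $[\mathfrak p\mathcal O_K]$ inside $\Po(K)_{nr}$ when $\mathfrak p$ is forced to be inert in $K/E$.
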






\begin{theorem} \label{theorem, our results for Dn-fields}
	Let $K$ be a $D_n$-field, where $n \geq 4$ is an even integer. If $h_K=2$, then $\Po(K)_{nr}=0$. 
\end{theorem}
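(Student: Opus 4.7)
The plan is to reduce the problem to the computation of $\Po(K)_{nr1}$ via \cite[Proposition 5.1]{Chabert II}, which asserts $\Po(K)_{nr1}=\Po(K)_{nr}$ for every $D_n$-field. Thus it will suffice to show that $[\Pi_p(K)]=0$ in $\Cl(K)$ for every rational prime $p$ unramified in $K/\mathbb{Q}$. I would fix the presentation $G_0=\Gal(N_K/\mathbb{Q})\simeq D_n=\langle r,s\mid r^n=s^2=1,\ srs=r^{-1}\rangle$ with $H_0=\Gal(N_K/K)=\langle s\rangle$. Since $n$ is even, $r^{n/2}$ is central in $D_n$, so $\langle s, r^{n/2}\rangle$ is the order-$4$ subgroup corresponding to the subfield $E$, and the nontrivial element of $\Gal(K/E)\simeq\mathbb{Z}/2\mathbb{Z}$ is $\sigma:=r^{n/2}\big|_K$.

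Next I would analyze the cycle structure of the Frobenius $g\in D_n$ of an unramified $p$ on the right cosets $H_0\backslash G_0\cong\mathbb{Z}/n\mathbb{Z}$: the action of $r^a$ is $j\mapsto j-a$, and the action of $r^as$ is $j\mapsto -j-a$. A direct case analysis would show that unless $g=1$ (in which case $\Pi_p(K)=p\mathcal{O}_K$ is principal) or $g=r^as$ with $a$ even, the Frobenius has no fixed point (using that $n$ is even), so no prime of residue degree one lies above $p$ and $\Pi_p(K)=\mathcal{O}_K$. In the remaining case, $2j\equiv -a\pmod{n}$ has exactly two solutions $j_0$ and $j_0+n/2$, so $\Pi_p(K)=\mathfrak{P}_1\mathfrak{P}_2$ for two distinct degree-one primes of $K$. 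The key point, which I would verify next, is that $r^{n/2}$ acts on $\mathbb{Z}/n\mathbb{Z}$ by the shift $j\mapsto j+n/2$ and therefore interchanges the two fixed cosets; since $r^{n/2}$ normalizes $H_0$, this translates to $\sigma(\mathfrak{P}_1)=\mathfrak{P}_2$ in $K$.

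Finally I would invoke the hypothesis $h_K=2$: the group $\Cl(K)\simeq\mathbb{Z}/2\mathbb{Z}$ has only the trivial automorphism, so $\sigma$ must act trivially on $\Cl(K)$. Hence $[\mathfrak{P}_2]=[\sigma(\mathfrak{P}_1)]=[\mathfrak{P}_1]$, and therefore
\[
[\Pi_p(K)]=[\mathfrak{P}_1\mathfrak{P}_2]=2[\mathfrak{P}_1]=0\in\Cl(K).
\]
This would yield $\Po(K)_{nr1}=0$, and Proposition 5.1 of \cite{Chabert II} would then give $\Po(K)_{nr}=0$. The main technical obstacle is the Frobenius bookkeeping in the middle paragraph, specifically the identification of the only nontrivial splitting type and the verification that its two degree-one primes are genuinely $\sigma$-conjugate; once this is in place, the conclusion is forced by the triviality of the automorphism group of $\mathbb{Z}/2\mathbb{Z}$.
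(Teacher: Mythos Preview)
Your argument is correct. It shares with the paper's proof the reduction $\Po(K)_{nr}=\Po(K)_{nr1}$ via \cite[Proposition~5.1]{Chabert II}, the identification of the only nontrivial splitting type (Frobenius a reflection $r^as$ with $a$ even, giving exactly two degree-one primes $\mathfrak{P}_1,\mathfrak{P}_2$), and the fact that these two primes sit over a common prime of the intermediate field $E$ --- your formulation being that they are swapped by $\sigma=r^{n/2}|_K\in\Gal(K/E)$, the paper's being that $\mathfrak{p}\mathcal{O}_K=\mathfrak{P}_1\mathfrak{P}_2$ for $\mathfrak{p}=\mathfrak{P}_1\cap E$.

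Where the two proofs genuinely diverge is in the finishing step. The paper invokes Theorem~\ref{theorem, CH-Dn fields} ($\Po(K)_{nr}=\epsilon_{K/E}(\Cl(E))$), reduces to the case $\epsilon_{K/E}\neq 0$, uses surjectivity of $N_{K/E}$ on class groups to force $h_E=2$, and then concludes by injectivity of $N_{K/E}$ together with $N_{K/E}(\Pi_p(K))=\mathfrak{p}^2$ principal. Your endgame is more elementary: since $\Cl(K)\simeq\mathbb{Z}/2\mathbb{Z}$ has trivial automorphism group, $\sigma$ must fix classes, so $[\mathfrak{P}_2]=[\sigma\mathfrak{P}_1]=[\mathfrak{P}_1]$ and $[\Pi_p(K)]=2[\mathfrak{P}_1]=0$. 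This bypasses Theorem~\ref{theorem, CH-Dn fields} and the norm-map input from \cite{M4} entirely, at the cost of not illuminating the capitulation description of $\Po(K)_{nr}$ that the paper's approach keeps in view. Your explicit coset computation also makes self-contained what the paper obtains by citing \cite[Proof of Proposition~5.1]{Chabert II} for the decomposition form.
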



\begin{proof}
	Denote by $E$ the unique  subfield of $K$ of degree $\frac{n}{2}$ over $\mathbb{Q}$. If the capitulation map is zero, i.e.,  $\epsilon_{K/E}(\Cl(E))=0$, then the assertion follows from Theorem \ref{theorem, CH-Dn fields}. So, let $\epsilon_{K/E}(\Cl(E))\neq 0$, which implies that $h_E \neq 1$. On the other hand, the norm map
	\begin{equation*}
		N_{K/E}: \Cl(K) \rightarrow \Cl(E)
	\end{equation*}
	is surjective, and consequently $h_E \mid h_K$, see \cite[Lemma 2.1]{M4}. Since $h_K$ is equal to $2$, so is $h_E$. Therefore, the norm map $N_{K/E}$ 
	would be injective. Let $p$ be a prime number unramified in $K/\mathbb{Q}$. Then the decomposition form of $p$ in $K/\mathbb{Q}$ can be written as
	\begin{equation} \label{equation, decomposition form of p in Dn}
		p \mathcal{O}_K =\mathfrak{P}_1 \mathfrak{P}_2 \mathfrak{P}_3 \dots \mathfrak{P}_{\frac{n}{2}+1},
	\end{equation}
	where $f(\mathfrak{P}_1/p)=f(\mathfrak{P}_2/p)=1$, and $f(\mathfrak{P}_i/p)=2$ for  $i=3,\dots, \frac{n}{2}+1$, see \cite[Proof of Proposition 5.1]{Chabert II}. Therefore 
	\begin{equation*}
		\Pi_{p}(K)=\mathfrak{P}_1 \mathfrak{P}_2.
	\end{equation*}
	We show that the ideal $\Pi_{p}(K)$ is principal. Let $\mathfrak{p}=\mathfrak{P}_1 \cap E$. Since $p=\mathfrak{p} \cap \mathbb{Z}$ is unramified in $K$, so is $\mathfrak{p}$. Also, from
	\begin{equation*}
		1=f(\mathfrak{P}_1/p)=f(\mathfrak{P}_1/\mathfrak{p}) f(\mathfrak{p}/p)
	\end{equation*}
	we obtain $f(\mathfrak{P}_1/\mathfrak{p})=f(\mathfrak{p}/p)=1$. 
	Since $[K:E]=2$, by the decomposition form \eqref{equation, decomposition form of p in Dn}, we must have 
	\begin{equation*}
		\mathfrak{p} \mathcal{O}_K=\mathfrak{P}_1 \mathfrak{P}_j
	\end{equation*}
	for some $j=2,3,\dots,\frac{n}{2}+1$. If $j \geq 3$, then
	\begin{equation*}
		2=f(\mathfrak{P}_j/p)=f(\mathfrak{P}_j/\mathfrak{p})f(\mathfrak{p}/p)
	\end{equation*}
	and we reach a contradiction. Therefore, 
	\begin{equation*}
		\mathfrak{p} \mathcal{O}_K=\mathfrak{P}_1 \mathfrak{P}_2=\Pi_{p}(K).
	\end{equation*}
	Hence,
	\begin{equation*}
		N_{K/E}(\Pi_{p}(K))=N_{K/E}(\mathfrak{P}_1 \mathfrak{P}_2)=\mathfrak{p}^2
	\end{equation*}
	is a principal ideal since $h_E=2$. But the norm map $N_{K/E}$ is injective, which implies that $\Pi_{p}(K)$ is principal, as claimed. Consequently $\Po(K)_{nr1}=0$. Now the triviality of $\Po(K)_{nr}$ follows from  $\Po(K)_{nr}=\Po(K)_{nr1}$  \cite[Proposition 5.1]{Chabert II}.
\end{proof}

\bibliographystyle{amsplain}

	\end{document}